\newtheorem{theorem}{Theorem}[section]
\newtheorem{lemma}[theorem]{Lemma}
\newtheorem{proposition}[theorem]{Proposition}
\newtheorem{corollary}[theorem]{Corollary}
\newtheorem{cor}[theorem]{Corollary}
\newtheorem{problem}[theorem]{{\large Problem}}
\newtheorem*{proposition*}{Proposition}
\theoremstyle{remark}
\newtheorem{remark}[theorem]{Remark}
\newtheorem{definition}[theorem]{Definition}
 \numberwithin{equation}{section}
\newcommand{\zero}{{\bf 0}}
\newcommand {\bH}{\mathbb {H}}
\newcommand {\bR}{\mathbb {R}}
\newcommand {\R}{\mathbb{R}}
\newcommand {\N}{\mathbb{N}}
\newcommand {\bX}{\mathbb {X}}
\newcommand {\cH}{\mathcal {H}}
\newcommand {\cB}{\mathcal {B}}
\newcommand {\cD}{\mathcal {D}}
\newcommand {\cC}{\mathcal {C}}
\newcommand {\cL}{\mathcal {L}}
\newcommand {\cI}{\mathcal {I}}
\newcommand {\bP}{\mathbb{P}}
\newcommand {\Z}{\mathbb {Z}}
\newcommand{\ahlfors}{C_{\rm dens}}
\DeclareMathOperator{\diam}{diam}
\DeclareMathOperator{\dom}{dom}
\newcommand{\sepconst}{\eta}
\newcommand{\pad}{\beta}
\newcommand{\Ctwo}{C_2}
\newcommand{\costconst}{{C_3}}
\newcommand{\tailconst}{{C_5}}
\newcommand{\bddgeom}{{C_4}}
\newcommand{\Csix}{C_6}
\newcommand{\cA}{\mathcal{A}}
\newcommand{\x}{{\bf x}}
\newcommand{\y}{{\bf y}}
\newcommand{\cS}{\mathcal{S}}
\newcommand{\cT}{\mathcal{T}}
\renewcommand{\epsilon}{\varepsilon}
\renewcommand{\phi}{\varphi}
\DeclareMathOperator{\Lip}{Lip}
\title{Characterizing rectifiability via biLipschitz pieces of Lipschitz mappings on the space}
\author{Sean Li and
Raanan Schul}
\date{\today}
\thanks{R. Schul is supported by the National Science Foundation under Grant No. DMS-2154613} 
\subjclass[2010]{28A12,28A75,28A78}
\keywords{Rectifiability, Metric} 
\begin{document}
\maketitle

\begin{abstract}
We give the following  characterization of rectifiable metric spaces.
A metric space with positive lower Hausdorff density is rectifiable if and only if, for any subset $F$ and $f:F\to Y$, a Lipschitz map into  a metric space with  positive measure image (of the same dimension), there exists a positive measure subset $A\subset F$ so that $f$ is biLipschitz on $A$. We also give a characterization in terms of a full biLipschitz decomposition. These characterizations are new even for subsets of Euclidean space.

One of our tools is Alberti representations. On the way we give a method for constructing independent Alberti representations, which may be of independent interest. We use this to characterize unrectifiable metric spaces as those spaces for which there exist a positive measure subset $S$ and a Lipschitz map $\phi$ into a lower dimensional Euclidean space so that $S$ is $\cH^1$-null with respect to all curve fragments that are quantitatively transversal to $\phi$.
\end{abstract}

\section{Introduction}

 The following Sard-like result is implicit in \cite{Kirchheim-MD}.

\begin{lemma} \label{l:kirchheim}
    Let $f : A \to Y$ be Lipschitz where $A \subseteq \R^p$ is Borel and $Y$ is a metric space. Then there exist a Borel decomposition 
    $$A = N \cup \bigcup_{i \in \N} E_i$$
    so that $\cH^p(f(N)) = 0$ and $f|_{E_i}$ is biLipschitz for each $i$.
\end{lemma}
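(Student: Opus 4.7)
The plan is to use Kirchheim's metric differentiation theorem, which says that a Lipschitz map $f:A\to Y$ is metrically differentiable at Lebesgue-a.e.\ $x\in A$: there is a seminorm $\mathrm{md}(f,x)$ on $\R^p$ with
\[
 d_Y(f(y),f(x)) = \mathrm{md}(f,x)(y-x) + o(|y-x|).
\]
Let $N_0\subseteq A$ be the set where this fails. Since $N_0$ has $p$-Lebesgue measure zero and $f$ is Lipschitz, $\cH^p(f(N_0))=0$, so $N_0$ can be swept into $N$.

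Next let $D=\{x\in A\setminus N_0:\mathrm{md}(f,x)\text{ is not a norm}\}$, i.e.\ the points where the seminorm has a nontrivial kernel. I would show $\cH^p(f(D))=0$ by a Sard-type argument: if $x\in D$, the approximation above forces $f(B(x,r))$ to lie in a $\epsilon r$-neighborhood of an affine subspace of dimension at most $p-1$ through $f(x)$ once $r$ is small enough. Covering $D$ at such small scales by a Vitali family and estimating the $\cH^p$-measure of each thin slab by $C(\epsilon r)\cdot r^{p-1}=C\epsilon r^p$, one gets $\cH^p(f(D))\leq C\epsilon\,\cH^p(A)$ for arbitrary $\epsilon$, hence $\cH^p(f(D))=0$. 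Put $D$ into $N$ as well. This Sard step is, I expect, the main technical obstacle; everything else is bookkeeping.

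On the remaining good set $G = A\setminus(N_0\cup D)$, at each point the seminorm $\mathrm{md}(f,x)$ is genuinely a norm on $\R^p$, hence bi-Lipschitz equivalent to the Euclidean norm with constants depending on $x$. I would partition $G$ into countably many Borel pieces $G_{n,k,\ell}$ on which
\begin{enumerate}
\item $\mathrm{md}(f,x)$ is $\epsilon$-close (in operator norm) to a fixed norm $\|\cdot\|_n$ chosen from a countable dense family of norms on $\R^p$,
\item $(1/k)|v|\leq \|v\|_n\leq k|v|$ for all $v$,
\item the Taylor-type remainder $|d_Y(f(y),f(x))-\mathrm{md}(f,x)(y-x)|\leq \epsilon|y-x|$ holds whenever $|y-x|\leq 1/\ell$.
\end{enumerate}
On each $G_{n,k,\ell}$, choosing $\epsilon$ small compared to $1/k$, the triangle inequality applied to any pair $x,y\in G_{n,k,\ell}$ with $|y-x|\leq 1/\ell$ gives a two-sided biLipschitz comparison between $d_Y(f(y),f(x))$ and $|y-x|$.

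Finally I would further subdivide each $G_{n,k,\ell}$ into countably many pieces of Euclidean diameter at most $1/\ell$ using a grid of cubes; on each such piece the small-scale biLipschitz estimate from the previous paragraph is global, and we obtain the desired countable family $\{E_i\}$. All remaining null sets from the partitioning are absorbed into $N$, which remains $f$-image $\cH^p$-null since it was already $p$-Lebesgue null.
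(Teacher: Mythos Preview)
Your overall strategy matches the paper's: both rest on Kirchheim's metric differentiation, splitting $A$ into the non-differentiability set $N_0$, the degenerate-seminorm set $D$, and a good set $G$ on which one partitions by approximate norm, bi-Lipschitz constant, and remainder scale to obtain the $E_i$. The paper simply cites the relevant results from \cite{Kirchheim-MD} (Lemma~4 for the decomposition, Theorem~1 for $\cH^p(N_0)=0$, and Theorem~7 for $\cH^p(f(D))=0$), and then reduces general Borel $A\subset\R^p$ to the case $A=\R^p$ by Kuratowski-embedding $Y\hookrightarrow\ell_\infty$ and extending each coordinate of $f$ by McShane--Whitney. You omit this reduction, and it matters since Kirchheim's differentiation theorem is stated for maps on all of $\R^p$.

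Your Sard step for $D$ has a genuine gap. You claim that at $x\in D$ the approximation forces $f(B(x,r))$ into an $\epsilon r$-neighborhood of an affine subspace of dimension $\leq p-1$ through $f(x)$. But $Y$ is an abstract metric space with no affine structure, and even after a Banach-space embedding the metric differential only controls the \emph{radial} quantity $d_Y(f(y),f(x))$; it gives no comparison between $f(y)$ and $f(y')$ for $y,y'\neq x$, hence no ``thin slab'' description of $f(B(x,r))$. The correct route is either to invoke Kirchheim's metric area formula directly (degenerate seminorm implies Jacobian zero, which is exactly what the paper cites as Theorem~7), or to first partition $D$ in the same way you partitioned $G$: into pieces on which $\mathrm{md}(f,\cdot)$ is uniformly $\epsilon$-close to a fixed degenerate seminorm $s$, with uniform remainder at scale $1/\ell$. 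On such a piece any two points differing by a vector in $\ker s$ map $O(\epsilon r)$-close in $Y$, so a small cube of side $r\leq 1/\ell$ can be covered in the \emph{domain} by roughly $(L/\epsilon)^{p-1}$ tubes parallel to $\ker s$, each with image of diameter $O(\epsilon r)$, giving $\cH^p_\infty$ of the image $\lesssim L^{p-1}\epsilon\, r^p$. Your partitioning machinery for $G$ is precisely what is needed; it must be applied to $D$ as well before the covering estimate goes through.
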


This is sometimes called a biLipschitz decomposition of $f$. We give the proof at the end of the section.

Viewing $N$ as the set of critical points, we get that the critical values $f(N)$ are measure zero. As $\cH^p(f(A \setminus \bigcup_i E_i)) = 0$, any positive measure of the image must then come from the biLipschitz pieces $f|_{E_i}$.
 This was also seen quantitatively  in  \cite{Schul-lip-bilip}.
 
 Recall that a metric space $X$ is said to be $p$-rectifiable if it can be covered, up to a set of $\cH^p$-measure zero, by a countable number of Lipschitz images of Borel subsets of $\R^p$. By Lemma \ref{l:kirchheim}, we see that these parameterizing maps can even be taken to be biLipschitz. On the other hand, $X$ is purely $p$-unrectifiable if all of its $p$-rectifiable subsets have $\cH^p$-measure zero. If $p \notin \N$, then $X$ is automatically purely $p$-unrectifiable. An easy conclusion is that any Lipschitz map on a rectifiable space $X$ must admit a biLipschitz decomposition.
We elaborate on this in  Corollary \ref{c:cor2} below. In fact, Corollary \ref{c:cor2} will show that the biLipschitz decomposition phenomenon {\it characterizes} rectifiability.

On the other hand, it was shown in \cite{LDLR-2017} that this property does not hold for the Heisenberg group endowed with the Carnot-Carath\'eodory metric. In particular, the Heisenberg group $\mathbb{H}$ is Ahlfors 4-regular and there exists a metric space $\bX$ and a Lipschitz map $f : \mathbb{H} \to \bX$ with positive $\cH^4$-measure image so that $f|_A$ is not biLipschitz for any positive measure $A \subseteq \mathbb{H}$.  In fact, it was further proven in \cite{LDLR-2017} that $\bH$ is not minimal in looking down (which is defined and discussed below).

 This is also related to the notion of strongly unrectifiable set introduced by Ambrosio and Kirchheim \cite{Ambrosio-Kirchheim}. See 
 also the introduction of \cite{bate2020purely}.

In this paper we aim to expand on \cite{LDLR-2017}. We provide a converse to Lemma \ref{l:kirchheim} in the following sense. If metric space valued Lipschitz maps on subsets of a space $X$ always admit biLipschitz decompositions, then $X$ is rectifiable. Our main result is the following.

\begin{theorem}\label{th:main}
     Let $(X,d)$ be a compact metric space so that $0 < \cH^p(X) < \infty$, $\theta_*^p(X,x) > 0$ for $\cH^p$-a.e. $x \in X$, and $X$ is not $p$-rectifiable.
 Then there is a positive measure $F \subset X$, a metric space $(Y,d)$, and a Lipschitz map $f:F \to Y$ such that  $\cH_d^p(f(F)) > 0$ and $f|_{A}$ is not biLipschitz for all positive measure $A \subseteq F$.
\end{theorem}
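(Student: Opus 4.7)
The plan is to produce the desired Lipschitz map by combining the Alberti-representation characterization of unrectifiability announced in the abstract with a carefully designed target metric space.

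First, I would reduce to a well-behaved setting: by standard density arguments using $\theta_*^p(X,x) > 0$ $\cH^p$-a.e.\ and $\cH^p(X) < \infty$, one can pass to a compact positive-measure subset on which the upper and lower $p$-densities are uniformly bounded away from $0$ and $\infty$. Since $X$ is not $p$-rectifiable, I can further refine to a subset which is purely $p$-unrectifiable and still carries positive measure.

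Next, I would invoke the characterization of purely unrectifiable metric spaces highlighted in the abstract: there exist a positive-measure subset $S$ and a Lipschitz map $\phi : X \to \R^{p-1}$ so that $S$ is $\cH^1$-null with respect to every curve fragment quantitatively transversal to $\phi$. This is the key structural input from the Alberti representation theory developed earlier in the paper. I would then construct $f : S \to Y$ of the form $f = (\phi, g)$, where $g : X \to \R$ is an auxiliary $1$-Lipschitz function (for instance, a coordinate of a Kuratowski-type embedding, or a distance function to a well-chosen reference point), and $Y = \R^{p-1} \times g(S)$ is equipped with a twisted distance of the form
\[ d_Y\bigl((u,t),(u',t')\bigr) \;=\; |u-u'| \;+\; \max\bigl\{0,\ |t-t'| - L\,|u-u'|\bigr\}, \]
or a snowflake variant, calibrated so that $f(S)$ inherits positive $\cH^p$-measure from $S$ while forcing any ``vertical'' displacement in the image to be accompanied by a quantitatively comparable horizontal displacement.

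To verify the biLipschitz obstruction, suppose for contradiction $f|_A$ were biLipschitz on some positive-measure $A \subseteq S$. Then $f(A)$ is biLipschitz to $A$ and hence carries positive $\cH^p$-measure in $Y$; by the design of $d_Y$, any such subset of $Y$ must admit Alberti representations whose generating curves have vertical velocity comparable to their horizontal velocity. Pulling these curves back through the biLipschitz map $f|_A$ would then produce a positive-$\cH^1$-measure family of curve fragments in $A$ whose derivatives are quantitatively transversal to $\phi$, contradicting the defining property of $S$.

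The main technical obstacle, and the bulk of the work, is the two-sided calibration of $Y$: it must be rich enough that $f(S)$ inherits positive $p$-dimensional measure from $S$, yet rigid enough that a biLipschitz image inside $Y$ is forced to be parameterized by $\phi$-transversal curves. The Heisenberg example in \cite{LDLR-2017} is the prototype (a Carnot-like product in which vertical displacement forces horizontal motion), and the challenge here is to produce a purely metric analogue operating from the hypothesis of pure $p$-unrectifiability alone, with no Lie-algebraic input. Executing the tangent-and-lift step in this abstract setting, via Alberti representations of $f_\#(\cH^p|_S)$, is where I expect the heaviest lifting.
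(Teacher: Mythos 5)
Your high-level strategy --- reduce to a purely unrectifiable compact piece with uniform density bounds, invoke the transversal-nullity characterization to get $S$ and $\phi$, and then build a target metric that collapses distances only in $\phi$-transversal directions --- is indeed the strategy of the paper. But the two steps you defer are precisely where the theorem lives, and as written both contain genuine gaps.

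First, the explicit formula $d_Y((u,t),(u',t')) = |u-u'| + \max\{0, |t-t'| - L|u-u'|\}$ is not a metric: take $u = u'' $, $t=0$, $t''=1$, and an intermediate point with $u'$ at distance $\epsilon$ from $u$ and $t'=1/2$; for $L\epsilon \ge 1/2$ the right side of the triangle inequality is $2\epsilon$ while the left side is $1$. To repair this you must pass to the largest metric below your cost function, i.e.\ an infimum over finite itineraries --- which is exactly what the paper does, with cost $c(x,y)=\alpha_k\rho(x,y)$ on a carefully chosen countable family of ``shortcut'' pairs lying in transversal cones $E_\phi(x,\theta_k)$, and $f$ the identity map $(F,\rho)\to(F,d)$. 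Once you take that infimum, the entire difficulty is showing the quotient metric does not degenerate and that $\cH^p_d(f(F))>0$; in the paper this occupies Sections 4--6 (the compactness argument of Lemma \ref{l:small-lower-bound}, which turns a sequence of cheap itineraries into a limiting biLipschitz fragment in $T_\kappa(\phi)$ meeting $F$ in positive $\cH^1$-measure, the tree induction of Lemma \ref{l:base}, and the David--Semmes regularity of Proposition \ref{l:DS-regularity}). Your proposal acknowledges this calibration as ``the heaviest lifting'' but supplies no mechanism for it, so the positive-measure-image claim is unproved.

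Second, your biLipschitz obstruction is wrong as stated. You argue that a positive-measure subset of $Y$ ``must admit Alberti representations whose generating curves have vertical velocity comparable to their horizontal velocity.'' Positive $\cH^p$-measure subsets of a metric space need not admit any Alberti representations at all --- that is essentially the content of the unrectifiability characterizations you are quoting --- and the space $Y$ you build is itself purely unrectifiable (it must be, or $f$ would have biLipschitz pieces by Lemma \ref{l:kirchheim}), so no such representations exist to pull back. The correct obstruction is much more elementary and does not pass through the image: one arranges that collapsed pairs $\{y,z\}$ with $d(y,z)\le\alpha_k\rho(y,z)$, $\alpha_k\to 0$, occur inside $\beta B$ for balls $B$ of a Vitali-type cover at infinitely many scales around $\cH^p$-a.e.\ point; then any positive-measure $A$ contains, near a density point and using the lower density bound \eqref{eq:Fr-defn}, points $y',z'$ with $\rho(y',z')\gtrsim r$ but $d(y',z')\lesssim\epsilon r$ for every $\epsilon>0$ (Lemma \ref{l:not-bilip}). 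You should replace your Alberti-pullback step with an argument of this kind.
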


The way the above theorem is proven is by judiciously finding a set $F\subset E$ (the set $F$ ends up being compact,  purely unrectifiable and with a uniform lower density), constructing a new metric on $F$, and then taking $f$ as the identity map.

Theorem \ref{th:main} allows us to characterize rectifiability by the biLipschitz decomposition phenomenon.

 \begin{cor}\label{c:cor2}
 Let $(E,\rho)$ be a metric space with $0 < \cH^p(E) < \infty$ and $\theta_*^p(E,x) > 0$ for $\cH^p$-a.e. $x \in E$.
Then $E$ is p-rectifiable if and only if for any $F\subset E$ and Lipschitz $f:F\to (Y,d)$, there is a Borel decomposition $F = N \cup E_1 \cup E_2 \cup ...$ so that $\cH^p(f(N)) = 0$ and $f|_{E_i}$ is biLipschitz for each $i$.  
 \end{cor}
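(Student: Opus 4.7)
The plan is to prove the two implications separately. The forward direction $(\Rightarrow)$ follows by pulling Lemma \ref{l:kirchheim} back across biLipschitz parametrizations of $E$, and the reverse direction $(\Leftarrow)$ is essentially the contrapositive of Theorem \ref{th:main}.

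For $(\Rightarrow)$, I would suppose $E$ is $p$-rectifiable. As noted immediately after Lemma \ref{l:kirchheim}, rectifiability allows us to cover $E$ up to an $\cH^p$-null set by biLipschitz images of Borel subsets of $\R^p$; write $E = N_0 \cup \bigcup_j g_j(A_j)$ with $g_j : A_j \to E$ biLipschitz and $\cH^p(N_0) = 0$. Given Lipschitz $f : F \to Y$, each composition $f \circ g_j$ is Lipschitz on $g_j^{-1}(F) \subset \R^p$, so Lemma \ref{l:kirchheim} supplies a decomposition $g_j^{-1}(F) = N_j \cup \bigcup_i E_{j,i}$ with $\cH^p((f \circ g_j)(N_j)) = 0$ and each $(f \circ g_j)|_{E_{j,i}}$ biLipschitz. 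Pushing forward by $g_j$ yields the biLipschitz pieces $g_j(E_{j,i})$ for $f$, and the combined exceptional set $N = (F \cap N_0) \cup \bigcup_j g_j(N_j)$ maps to a $\cH^p$-null set under $f$, since $f$ is Lipschitz on the null set $F \cap N_0$ and $(f \circ g_j)(N_j)$ is null for each $j$.

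For $(\Leftarrow)$, I would argue by contrapositive. If $E$ is not $p$-rectifiable, then the purely $p$-unrectifiable part $U \subset E$ has $\cH^p(U) > 0$, and inner regularity of $\cH^p$ on $E$ yields a compact $K \subset U$ with $\cH^p(K) > 0$; since every subset of $U$ inherits pure unrectifiability, $K$ is not $p$-rectifiable. Applying Theorem \ref{th:main} to $K$ produces a positive-measure $F \subseteq K$ and a Lipschitz $f : F \to Y$ with $\cH^p(f(F)) > 0$ admitting no biLipschitz restriction to any positive-measure subset of $F$. If a decomposition $F = N \cup \bigcup_i E_i$ of the asserted form existed, positivity of $\cH^p(f(F))$ would force $\cH^p(f(E_i)) > 0$ for some $i$, hence $\cH^p(E_i) > 0$ by biLipschitz-ness, contradicting the conclusion of Theorem \ref{th:main}.

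The main technical subtlety I anticipate is verifying that $K$ inherits the positive-lower-density hypothesis of Theorem \ref{th:main}, since restricting to a compact subset can only decrease $\theta_*^p$. I would resolve this by the standard density argument showing $\cH^p(B(x,r) \cap (E \setminus K))/r^p \to 0$ for $\cH^p$-a.e.\ $x \in K$, which yields $\theta_*^p(K,x) = \theta_*^p(E,x) > 0$ at almost every $x \in K$. If there were any obstruction at this step for an arbitrary compact choice, one would instead exhaust $U$ by an increasing sequence of compact sets and pass to one on which the lower-density property is preserved. Once this is handled the rest is bookkeeping.
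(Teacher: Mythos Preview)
Your proof is correct and follows essentially the same approach as the paper: pull back Lemma \ref{l:kirchheim} along biLipschitz charts for $(\Rightarrow)$, and use Theorem \ref{th:main} for the contrapositive of $(\Leftarrow)$. You are in fact more careful than the paper in one respect: the paper applies Theorem \ref{th:main} directly to $E$ in the unrectifiable case without commenting on compactness, whereas you explicitly pass to a compact positive-measure subset $K$ of the purely unrectifiable part and check that the lower-density hypothesis survives. Your resolution of that point via the standard fact that $\Theta^{*p}(E\setminus K,x)=0$ for $\cH^p$-a.e.\ $x\in K$ (valid whenever $\cH^p(E)<\infty$) is correct and suffices.
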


Loosely speaking, Corollary \ref{c:cor2} says that one may characterize rectifiability by asking the question: is there a Lipschitz map that pushes forward positive Hausdorff measure and is not essentially biLipschitz? For rectifiable spaces the answer is NO and for unrectifiable spaces the answer is YES.

\begin{proof}[Proof of Corollary \ref{c:cor2}]
    If $E$ is unrectifiable, then Theorem \ref{th:main} gives a positive measure subset $F$ and a Lipschitz $f : F \to Y$ with positive measure image and no biLipschitz pieces. If there were a biLipschitz decomposition $F = N \cup E_1 \cup E_2 \cup ...$ then for all $i$, $\cH^p(E_i) = 0$ so that $\cH^p(f(E_i)) = 0$. This would mean $\cH^p(f(N)) = \cH^p(f(F)) > 0$, a contradiction.
    
The other direction is essentially Lemma \ref{l:kirchheim}, but we provide the details. Let
$E$ be rectifiable 
and $f : F \to Y$ be Lipschitz where $F \subset E$ has positive $\cH^p$-measure. 
As $F$ is then also rectifiable, there exists a countable number of biLipschitz maps $g_i : A_i \to F$ where $A_i \subset \R^p$ is Borel so that $\cH^p(F \setminus \bigcup_i g_i(A_i)) = 0$. It then suffices to find a biLipschitz decomposition of each $g_i(A_i)$.

By Lemma \ref{l:kirchheim}, there exists a Borel decomposition $A_i = N \cup F_1 \cup F_2 \cup ...$ for each function $f \circ g_i : A_i \to Y$. We then have
$$g(A_i) = g_i(N) \cup \bigcup_{j} g_i(F_j).$$
We claim this is the biLipschitz decomposition.

First note that all $g_i(F_j)$ are Borel as $g_i$ is biLipschitz. By assumption, $\cH^p(f(g_i(N))) = 0$. On the other hand, $g_i$ and $f \circ g_i|_{F_j}$ are biLipschitz for all $j$. Thus, $f|_{g_i(F_j)} = f \circ g_i|_{F_j} \circ g_i^{-1}|_{g_i(F_j)}$ is also biLipschitz.
\end{proof}

Corollary \ref{c:cor2} shows that rectifiability is related to finding a full biLipschitz decomposition of $f$. It is also easy to see that it is equivalent to finding a single biLipschitz piece when $f$ has positive measure image.

 \begin{cor}\label{c:cor1}
 Let $(E,\rho)$ be a metric space with $0 < \cH^p(E) < \infty$ and $\theta_*^p(E,x) > 0$ for $\cH^p$-a.e. $x \in E$.
Then $E$ is p-rectifiable if and only if for any $F\subset E$ and Lipschitz $f:F\to (Y,d)$ with positive $\cH^p$-measure $f(F)$
there is $A\subset F$ of positive $\cH^p$-measure so that $f|_A$ is biLipschitz.  
 \end{cor}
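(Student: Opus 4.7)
The plan is to derive Corollary \ref{c:cor1} as an essentially immediate consequence of the two preceding results: Theorem \ref{th:main} handles the backward direction, and Corollary \ref{c:cor2} (or equivalently Lemma \ref{l:kirchheim}) handles the forward direction. No new machinery is required.

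For the forward implication, suppose $E$ is $p$-rectifiable, let $F\subset E$ be arbitrary, and let $f:F\to (Y,d)$ be Lipschitz with $\cH^p(f(F))>0$. I would apply Corollary \ref{c:cor2} to obtain a Borel decomposition $F=N\cup\bigcup_i E_i$ with $\cH^p(f(N))=0$ and $f|_{E_i}$ biLipschitz for every $i$. Countable subadditivity forces $\cH^p(f(E_{i_0}))>0$ for some index $i_0$, and then the biLipschitz property of $f|_{E_{i_0}}$ promotes this to $\cH^p(E_{i_0})>0$. Setting $A:=E_{i_0}$ gives the required positive measure biLipschitz piece.

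For the backward implication I would argue by contraposition. If $E$ is not $p$-rectifiable, then the standing hypotheses $0<\cH^p(E)<\infty$ and $\theta_*^p(E,x)>0$ almost everywhere are exactly the hypotheses of Theorem \ref{th:main}, so Theorem \ref{th:main} produces a positive measure subset $F\subset E$, a metric space $(Y,d)$, and a Lipschitz map $f:F\to Y$ with $\cH^p(f(F))>0$ such that $f|_A$ is not biLipschitz for any positive measure $A\subseteq F$. This directly contradicts the assumed biLipschitz piece property.

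There is no real obstacle here; the whole content of the corollary sits in the two earlier results. The only mild point worth flagging in the write-up is that Corollary \ref{c:cor2} gives a full decomposition while Corollary \ref{c:cor1} asks only for a single piece, and passing between the two uses merely the fact that a biLipschitz map preserves positivity of $\cH^p$ together with the positive measure image hypothesis on $f(F)$.
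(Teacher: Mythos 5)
Your proposal is correct and follows essentially the same route as the paper: Theorem \ref{th:main} gives the unrectifiable direction immediately, and the rectifiable direction applies Corollary \ref{c:cor2} and extracts a single positive measure piece from the decomposition using subadditivity and the fact that a biLipschitz map cannot send a null set to a set of positive $\cH^p$-measure.
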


 \begin{proof}
    If $E$ is unrectifiable, then the corollary follows immediately from Theorem \ref{th:main}.

    If $E$ is rectifiable and $f : F \to Y$ is as described, then by Corollary \ref{c:cor2}, there is a Borel decomposition $F = N \cup E_1 \cup E_2 \cup...$ so that $\cH^p(f(N)) = 0$ and each $f|_{E_i}$ is biLipschitz. As $\cH^p(f(F)) > 0$, it cannot be that $\cH^p(F \setminus N) = 0$. Thus, there is some $E_i$ of positive measure, as desired.
 \end{proof}

We remark on the relationship between this paper and the work in \cite{LDLR-2017} where  the domain is the Heisenberg group.  Both papers construct new metrics on the space by collapsing points, but there are two main differences.
The first difference is that as our spaces in question are not homogeneous, we are forced to vary the scales of collapse based on location.
The second difference is that the metric of the Heisenberg group snowflakes in the vertical direction. This leads to a lot of slack in  the triangle inequality, which is what \cite{LDLR-2017} used to collapse points. Our settings do not have this characteristic and so we instead collapse points in directions that exhibit no rectifiability. The existence of these directions is partially a result of the following proposition.

\begin{proposition} \label{p:intro-extend}
  Let $(X,d,\mu)$ be a compact metric measure space and $\phi : X \to \R^n$ a Lipschitz map that has $n$ $\phi$-independent Alberti representations. Then there is a Borel decomposition
  \begin{align*}
    X = \bigcup_{i \in \N} A_i \cup \bigcup_{i \in \N} S_i,
  \end{align*}
  where for each $i \in \N$ there is a $\kappa_i > 0$, and Lipschitz $\phi_i : X \to \R^{n+1}$ for which $\phi_i|_{A_i}$ has $n+1$ $\phi_i$-independent Alberti representations and $\cH^1(S_i \cap \mathrm{Im}(\gamma)) = 0$ for all biLipschitz $\gamma : K \to X$ where $K \subset \R$ is compact and
  $$\|(\phi \circ \gamma)'(t)\| \leq \kappa_i \Lip(\phi,\gamma(t)) \Lip(\gamma,t)$$
  for almost every $t \in K$.
\end{proposition}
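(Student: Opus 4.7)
The plan is to produce the decomposition by a Zorn-type exhaustion argument performed at each of countably many transversality thresholds $\kappa_k \downarrow 0$. Fix $\kappa > 0$ and call a Borel set $B \subset X$ \emph{$\kappa$-extendable} if there exist a Lipschitz $\psi : X \to \R$ and a constant $c > 0$ such that $\mu|_B$ admits an Alberti representation supported on biLipschitz curve fragments $\gamma : K \to X$ with
\[
\|(\phi\circ\gamma)'(t)\| \leq \kappa \Lip(\phi,\gamma(t)) \Lip(\gamma,t) \quad \text{and} \quad |(\psi\circ\gamma)'(t)| \geq c
\]
for a.e.\ $t \in K$. Setting $\phi_B := (\phi,\psi) : X \to \R^{n+1}$, such a representation combined with the $n$ hypothesized $\phi$-independent Alberti representations of $\mu$ gives $n+1$ representations of $\mu|_B$ that are $\phi_B$-independent: the $n$ old directions span an $n$-plane in $\R^{n+1}$ projecting isomorphically onto $\R^n$, while the new direction lies in a cone around the $(n{+}1)$-st axis, transverse to that $n$-plane once $\kappa$ is small relative to $c$ and the independence constants of the old representations.

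\textbf{Exhaustion.} For each fixed $\kappa$, a standard Zorn / supremum-of-measure argument extracts a countable sequence $A_{\kappa,1}, A_{\kappa,2}, \dots$ of $\kappa$-extendable sets with witnesses $\phi_{\kappa,j} = (\phi,\psi_{\kappa,j})$ whose union attains the supremum of $\mu$-measures over $\kappa$-extendable subsets of $X$. Set $S_\kappa := X \setminus \bigcup_j A_{\kappa,j}$. Applying this at $\kappa_k = 2^{-k}$ and relabeling produces the desired countable family; it remains only to verify the null property of each $S_\kappa$.

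\textbf{Null property and main obstacle.} To see that every biLipschitz $\gamma : K \to X$ satisfying the $\kappa$-transversality bound has $\cH^1(\mathrm{Image}(\gamma) \cap S_\kappa) = 0$, argue by contradiction. Restrict $\gamma$ to $K' := \gamma^{-1}(S_\kappa)$ of positive $\cH^1$-measure, and promote this single curve to a Borel family of $\kappa$-transversal biLipschitz fragments covering a subset $B \subset S_\kappa$ of positive $\mu$-measure in such a way that disintegration of $\mu|_B$ along the family yields an Alberti representation. Choosing $\psi$ as a McShane--Kirszbraun Lipschitz extension of an arc-length-like coordinate on $\mathrm{Image}(\gamma)$ -- e.g., a coordinate function of a Kuratowski embedding of $X$ on which $\gamma$ is biLipschitz -- ensures $|(\psi\circ\gamma)'| \geq c > 0$ on $K'$, so that $B$ becomes $\kappa$-extendable, contradicting the maximality from the previous step. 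This curve-to-family upgrade is the main obstacle: it requires the covering/disintegration technology for Alberti representations in metric measure spaces, in the spirit of the line of work building on Alberti and Bate. A secondary subtlety is ensuring the chosen $\psi$ is globally Lipschitz with a uniform lower bound on $|(\psi\circ\gamma)'|$ across all curves of the new family simultaneously, which may require further partitioning of $B$ into countably many pieces.
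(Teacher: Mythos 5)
Your high-level architecture (exhaustion over ``extendable'' sets, then a contradiction argument on the residual set) matches the spirit of the paper's proof, which runs the exhaustion via Lemma 5.2 of \cite{bate-jams}. You have also correctly located the crux. But the step you flag as ``the main obstacle'' --- promoting a single $\kappa$-transversal fragment meeting $S_\kappa$ in positive $\cH^1$-measure to an Alberti representation of a positive-$\mu$-measure subset --- is precisely where all of the work lies, and it is left unproved. The reason it is genuinely delicate is that the family $T_\kappa(\phi)$ is defined by an almost-everywhere derivative inequality and is \emph{not closed} under Hausdorff convergence of graphs, so the standard dichotomy (``either $\mu|_B$ has an Alberti representation in the family, or a positive-measure subset is null for every curve of the family'') cannot be applied to it directly. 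Your Zorn argument gives you only that the residual set contains no positive-measure $\kappa$-extendable subset; passing from that to ``null for every single transversal fragment'' is exactly the missing implication, and it does not follow from disintegrating along one curve.

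The paper resolves this by changing the curve family rather than the exhaustion. It embeds $X$ into $\R^n\oplus_\infty\ell_\infty$ so that $\phi$ and a countable norming family $\{\xi_i\}$ are linear, and introduces the auxiliary families $\tilde{T}_{\delta,i}$ of curves defined on full intervals in the \emph{convex hull} of $X$, with difference-quotient (not derivative) bounds: small $\phi$-increments and $\xi_i$-increments at least $\tfrac12$ of the displacement. These families are closed (Lemma \ref{l:T-compact}), so the exhaustion applies and yields a residual $S$ null for every curve in every $\tilde{T}_{\delta,i}$. The transfer back to arbitrary fragments in $T_\kappa(\phi)$ is Lemma \ref{l:invisible}: given such a fragment, Lemma \ref{l:banach-partition} partitions its domain into countably many pieces on each of which some $\xi_i$ moves with definite speed, and a piecewise-linear extension into the convex hull converts each piece into a genuine member of $\tilde{T}_{\delta',i}$, forcing $\cH^1(S\cap\gamma)=0$. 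This is also where your choice of $\psi$ is repaired: no single Kuratowski coordinate works along all of $\gamma$, and the uniform lower speed bound is achieved only after the countable partition by coordinate index, which is why the final decomposition is indexed by $i$ with $\phi_i=(\phi,\xi_i)$. Separately, your linear-independence check needs the old representations refined to have $\phi$-speed bounded below and $\Lip(\phi,\cdot)$ pinned to a dyadic band before the quantitative comparison (the paper's Lemma \ref{l:invert-LI}) closes; you gesture at this but it is part of why the decomposition is countable rather than a single piece per $\kappa$.
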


All the terminology will be given in Section \ref{s:preliminaries}. From this proposition, we get the following characterization of unrectifiability in terms of curve fragments ``transversal'' to some Lipschitz map into a lower dimensional Euclidean space.

\begin{theorem} \label{th:intro-null-char}
Let $(X,d)$ be a compact metric space so that $0 < \cH^p(X) < \infty$. Then $X$ is $p$-unrectifiable if and only if there is a $\kappa > 0$, a $S \subset X$ for which $\cH^p(S) > 0$, and a Lipschitz map $\phi : S \to \R^n$ where $n < p$ so that $\cH^1(\mathrm{Im}(\gamma) \cap S) = 0$ for every biLipschitz $\gamma : K \to X$ where $K \subset \R$ is compact and
  $$\|(\phi \circ \gamma)'(t)\| \leq \kappa \Lip(\phi,\gamma(t)) \Lip(\gamma,t)$$
  for almost every $t \in K$.
\end{theorem}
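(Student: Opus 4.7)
The plan is to combine Proposition~\ref{p:intro-extend} with a maximality argument (forward direction) and a Fubini/kernel argument (backward direction).

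For the forward direction, first I would pass to a compact positive-measure subset on which $X$ is purely $p$-unrectifiable. I would then consider the family $\cF$ of triples $(B,\phi,m)$ where $B \subset X$ is Borel with $\cH^p(B) > 0$, $\phi : X \to \R^m$ is Lipschitz, and $(B,d,\cH^p|_B)$ admits $m$ $\phi$-independent Alberti representations, and set $N = \sup\{m : (B,\phi,m) \in \cF\}$. The key input is the rectifiability-via-Alberti-representations theorem (Bate, later refined): a metric measure space admitting $p$ independent Alberti representations with respect to a Lipschitz map into $\R^p$ is $p$-rectifiable. Since $X$ is purely $p$-unrectifiable, this would force $N < p$. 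I would then fix $(B,\phi,N) \in \cF$ realizing the supremum and apply Proposition~\ref{p:intro-extend} to $(B,d,\cH^p|_B)$ with $\phi|_B$ playing the role of $\phi$, obtaining a Borel decomposition $B = \bigcup_i A_i \cup \bigcup_i S_i$. If some $A_i$ had positive measure, the associated Lipschitz $\phi_i : B \to \R^{N+1}$ could be extended coordinatewise to $X$ by McShane, yielding a triple in $\cF$ with $m = N+1$ and contradicting maximality. Hence every $A_i$ is $\cH^p$-null and some $S_i$ has positive measure; taking $S = S_i$, $\kappa = \kappa_i$, and $\phi|_{S_i}$ would give the desired data (for a biLipschitz $\gamma : K \to X$ with $\mathrm{Im}(\gamma) \cap S \neq \emptyset$, restricting $\gamma$ to $\gamma^{-1}(B)$ reduces to the conclusion of Proposition~\ref{p:intro-extend}).

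For the backward direction, I would suppose $(\phi,S,\kappa)$ as in the statement exists yet $X$ has a positive-measure $p$-rectifiable subset meeting $S$ in positive measure, and derive a contradiction. Lemma~\ref{l:kirchheim} supplies a biLipschitz $g : F \to X$ with $F \subset \R^p$ Borel of positive Lebesgue measure and $g(F) \subset S$. The composition $\psi = \phi \circ g : F \to \R^n$ is Lipschitz; extend via Kirszbraun to $\tilde\psi : \R^p \to \R^n$. By Rademacher, at a.e.\ $x \in F$, $D\tilde\psi(x)$ exists with kernel of dimension at least $p - n \geq 1$. I would measurably select a unit vector $v_x \in \ker D\tilde\psi(x)$, cover $S^{p-1}$ by finitely many $(\kappa/2)$-balls, and use Lusin to extract a positive-measure compact $F_0 \subset F$ and $v_0 \in S^{p-1}$ with $\|v_x - v_0\| < \kappa/2$ on $F_0$. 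Fubini in direction $v_0$ then furnishes a line $\ell$ (parameterized by arclength) with $\cH^1(\ell \cap F_0) > 0$ along which almost every point is a Lebesgue density point of $F_0$ and a differentiability point of $\tilde\psi$. Setting $K = \ell^{-1}(F_0)$ and $\gamma = g \circ \ell : K \to X$, the map $\gamma$ is biLipschitz, $\mathrm{Im}(\gamma) \subset g(F_0) \subset S$, and at a.e.\ $t \in K$,
\[
\|(\phi \circ \gamma)'(t)\| = \|D\tilde\psi(\ell(t))(v_0 - v_{\ell(t)})\| \leq \tfrac{\kappa}{2}\|D\tilde\psi(\ell(t))\| = \tfrac{\kappa}{2}\Lip(\psi,\ell(t)) \leq \kappa\,\Lip(\phi,\gamma(t))\,\Lip(\gamma,t),
\]
using $D\tilde\psi(x) v_x = 0$, the identity $\|D\tilde\psi(x)\| = \Lip(\psi,x)$ at density points of $F$, and the pointwise chain rule $\Lip(\psi, x) \leq \Lip(\phi, g(x))\,\Lip(g, x)$. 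Thus $\gamma$ satisfies the transversality inequality but has $\cH^1(\mathrm{Im}(\gamma) \cap S) > 0$, a contradiction.

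The hard part is the forward direction, specifically invoking the rectifiability-via-$p$-Alberti-representations input to bound $N < p$. Depending on the precise formulation of that theorem in the literature (some versions require pointwise doubling or a positive lower density hypothesis), an additional reduction to a subset where $\cH^p$ is suitably regular may be needed before the maximality argument, in the spirit of the lower-density reductions used for Theorem~\ref{th:main}. By contrast, the backward direction is fairly standard modulo the careful choice of cone diameter $\kappa/2$, which is exactly what closes the pointwise Lipschitz inequality with the prescribed $\kappa$.
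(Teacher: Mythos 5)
Your proposal follows essentially the same route as the paper's proof of (the restated) Theorem \ref{th:null-char}: the forward direction is the paper's Corollary \ref{c:Alberti-cor} — a maximality argument over the number of independent Alberti representations fed into Proposition \ref{p:intro-extend}, with the Bate--Weigt rectifiability criterion supplying the bound $N<p$ — and the backward direction is the paper's differentiation argument along lines nearly parallel to the kernel of the derivative. Two small repairs are needed. First, in the backward direction the cone width $\kappa/2$ does not actually close the inequality: you correctly get $\|(\phi\circ\gamma)'(t)\|\le \tfrac{\kappa}{2}\,\|D\tilde\psi(\ell(t))\|_{op}$ and $\|D\tilde\psi(x)\|_{op}=\Lip(\psi,x)\le \Lip(\phi,g(x))\,\Lip(g,x)$, but converting $\Lip(g,\ell(t))$ into $\Lip(\gamma,t)$ costs a factor of $\|g\|_{Lip}\|g^{-1}\|_{Lip}$, which can exceed $2$; the cone width must be taken at most $\kappa/(\|g\|_{Lip}\|g^{-1}\|_{Lip})$, which is exactly the normalization the paper uses for its segments $L_{r,\theta}$. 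Second, for non-integer $p$ the input ``$p$ independent Alberti representations imply $p$-rectifiability'' is vacuous, so it does not bound $N$; to conclude $N\le n=\lfloor p\rfloor$ in that case one needs instead Theorem 2.17 of \cite{bate2020purely}, which caps the number of independent Alberti representations of a measure with $0<\cH^p<\infty$ at $\lfloor p\rfloor$ (you partially flag this caveat; the paper splits into the cases $p\in\N$ and $p\notin\N$ for precisely this reason). With those two adjustments the argument is correct and matches the paper's; the only genuine stylistic difference is that you use Rademacher plus Lusin and Fubini where the paper invokes the $C^1$ approximation of \cite[Theorem 3.1.5]{Federer} and treats the degenerate case $Dg=0$ separately — both are standard and interchangeable here.
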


The transverse of $\phi$ will then be the directions that exhibit no rectifiability. 

In the case when we are dealing with subsets of some Euclidean space $\R^P$, we could instead use the following result, which is easier to understand.

\begin{theorem} \label{p:david}
Let $p > 0$, $P \geq p$ be an integer, $n$ be the largest integer smaller than $p$, and $E\subset \R^P$ a purely $p$-unrectifiable subset for which $0 < \cH^p(E) < \infty$. Then for any $\kappa > 0$, we have a countable Borel decomposition
$$E = \bigcup_i E_i$$
  and a collection of $n$-dimensional planes $W_i \subset \R^P$ so that $\cH^1(E_i \cap \mathrm{Im}(\gamma)) = 0$ for all biLipschitz $\gamma : K \to \R^P$ where $K \subset \R$ is compact and
  $$d(\gamma'(t),W_i) \geq (1-\kappa) \|\gamma'(t)\|$$
  for almost every $t \in K$.
\end{theorem}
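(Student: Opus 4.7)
The plan is to derive Theorem \ref{p:david} by iterating Proposition \ref{p:intro-extend} and then linearizing the resulting Lipschitz targets so that the abstract Alberti-transversality condition becomes a genuine $n$-plane transversality condition in $\R^P$. Starting the iteration at the trivial map $\phi^{(0)} : E \to \R^0$ (which vacuously admits $0$ independent Alberti representations), Proposition \ref{p:intro-extend} produces a Borel decomposition $E = \bigcup_i A^{(1)}_i \cup \bigcup_i S^{(1)}_i$ where each $A^{(1)}_i$ carries one independent Alberti representation with respect to some Lipschitz $\phi^{(1)}_i : E \to \R$, and each $S^{(1)}_i$ is purely $1$-unrectifiable. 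Iterate on each extendable $A$-piece. The process must stop before a positive-measure piece with $n+1$ independent Alberti representations appears: such a piece would, by the standard link between independent Alberti representations and rectifiability, contain a positive-$\cH^{n+1}$-measure $(n+1)$-rectifiable subset; in the integer case $p = n+1$ this would be a positive-measure $p$-rectifiable subset of $E$, contradicting pure $p$-unrectifiability, and in the non-integer case $p < n+1$ its Hausdorff dimension $n+1$ would force $\cH^p = \infty$, contradicting $\cH^p(E) < \infty$. Assembling the null pieces from all iterations gives a countable Borel decomposition $E = \bigcup_j S_j$ where each $S_j$ carries a Lipschitz $\phi_j : E \to \R^{k_j}$ with $k_j \leq n$ and a $\kappa_j > 0$ such that $\cH^1(S_j \cap \mathrm{Im}(\gamma)) = 0$ for every biLipschitz $\gamma : K \to \R^P$ with $\|(\phi_j \circ \gamma)'(t)\| \leq \kappa_j \Lip(\phi_j,\gamma(t))\Lip(\gamma,t)$ a.e.

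Next, linearize each $\phi_j$ using the Euclidean structure. Extend $\phi_j$ to a Lipschitz $\tilde{\phi}_j : \R^P \to \R^{k_j}$ via Kirszbraun. By Rademacher's theorem $\tilde{\phi}_j$ is differentiable $\cL^P$-a.e., and a Lusin/Egoroff decomposition of the derivative $D\tilde{\phi}_j$ partitions $\R^P$ (minus a $\cL^P$-null set) into countably many Borel pieces on each of which $\tilde{\phi}_j$ is $\delta$-close in a local Lipschitz sense to an affine map with fixed linear part $L_{j\ell} : \R^P \to \R^{k_j}$, for any prescribed $\delta > 0$. Intersect with $S_j$ to obtain a refinement $S_j = \bigcup_\ell S_{j\ell}$.

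Finally, extract $n$-planes. On each $S_{j\ell}$, choose any $n$-dimensional linear subspace $W_{j\ell} \subset \R^P$ containing $\mathrm{Image}(L_{j\ell}^*)$, whose dimension is at most $k_j \leq n$; then $W_{j\ell}^\perp \subseteq \ker L_{j\ell}$. If a biLipschitz curve fragment $\gamma$ satisfies $d(\gamma'(t), W_{j\ell}) \geq (1-\kappa)\|\gamma'(t)\|$ a.e., then $\gamma'(t)$ lies in a narrow cone about $W_{j\ell}^\perp \subseteq \ker L_{j\ell}$, so $\|L_{j\ell}\gamma'(t)\|$ is small relative to $\|\gamma'(t)\|$, and the linearization then forces $\|(\phi_j \circ \gamma)'(t)\|$ to be small relative to $\Lip(\phi_j, \gamma(t))\Lip(\gamma, t)$. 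Choosing $\kappa > 0$ small enough in terms of $\kappa_j$, $\delta$, and $\Lip(\phi_j)$ triggers the transversality hypothesis from the iterative decomposition, giving $\cH^1(S_{j\ell} \cap \mathrm{Im}(\gamma)) = 0$. Reindexing $\{S_{j\ell}, W_{j\ell}\}$ as $\{E_i, W_i\}$ completes the proof.

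The main obstacle is the linearization step: Rademacher's theorem provides differentiability only $\cL^P$-a.e.\ in $\R^P$, whereas $S_j \subset E$ may have zero $\cL^P$-measure when $p < P$, so the naive Lusin/Egoroff partition on $\R^P$ may miss $S_j$ entirely. Overcoming this requires a metric-differentiation argument in the style of Kirchheim applied to $\tilde{\phi}_j$ with $\cH^p$ restricted to $S_j$ as reference measure, or equivalently a tangent-plane / blow-up analysis at $\cH^p$-a.e.\ point of $E$ using the positive upper $p$-density of $\cH^p$ on $E$.
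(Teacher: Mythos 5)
The paper does not prove this statement internally: it is quoted verbatim as Theorem 2.21 of \cite{bate2020purely} applied to the inclusion $E \hookrightarrow \R^P$, so your from-scratch route via Proposition \ref{p:intro-extend} is necessarily a different argument. Unfortunately it has a genuine gap, and it is exactly the one you flag at the end: the linearization step. Rademacher and the $C^1$ (Lusin/Whitney) approximation of $D\tilde{\phi}_j$ control $\tilde{\phi}_j$ only off an $\cL^P$-null set, and when $p < P$ the set $S_j$ is itself $\cL^P$-null, so the decomposition may miss $S_j$ entirely. The proposed repairs do not close this. Kirchheim's metric differentiation applies to Lipschitz maps defined on subsets of $\R^p$ with respect to Lebesgue measure, not to maps restricted to an $\cH^p$-positive, $\cL^P$-null subset of $\R^P$; and a tangent-plane/blow-up analysis at $\cH^p$-a.e.\ point is unavailable precisely because $E$ is purely $p$-unrectifiable --- for integer $p$ the nonexistence of approximate tangent planes a.e.\ is essentially equivalent to pure unrectifiability, and the Alberti--Cs\"ornyei--Preiss/Bate theory shows that on such sets Lipschitz functions need not admit any a.e.\ linearization along the set. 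Since the whole point of the theorem is to pass from the intrinsic condition $\|(\phi_j \circ \gamma)'\| \leq \kappa_j \Lip(\phi_j,\gamma(t))\Lip(\gamma,t)$ to transversality to a genuine plane $W_i$, this step cannot be waved through; it is where the real work of \cite{bate2020purely} lives.

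Two smaller issues. First, even granting the linearization, your argument only yields the conclusion for some sufficiently small $\kappa$ (you choose $\kappa$ small in terms of $\kappa_j$, $\delta$, $\Lip(\phi_j)$), whereas the theorem asserts the decomposition exists for \emph{every} $\kappa > 0$, including $\kappa$ close to $1$, where the transversality cone about $W_i^\perp$ is nearly all of $\R^P$; this stronger form does not follow from a cone-narrowing argument. (The weak form would still suffice for the forward direction of Corollary \ref{c:euc-char}, but not for the theorem as stated.) Second, your termination argument for the iteration is stated loosely: the precise inputs are Theorem 1.1 of \cite{bate-weigt} when $p = n+1$ is an integer and Theorem 2.17 of \cite{bate2020purely} when $p \notin \N$; the claim that $n+1$ independent Alberti representations of $\cH^p|_S$ produce a positive-$\cH^{n+1}$-measure rectifiable subset is not the form in which these results are available and would need justification. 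The iteration-plus-termination skeleton in your first paragraph is otherwise sound and mirrors the paper's proof of Theorem \ref{th:null-char}.
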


This is simply Theorem 2.21 of \cite{bate2020purely} applied to the inclusion map $E \hookrightarrow \R^P$. As rectifiable spaces all have positive lower density, Theorem \ref{p:david} immediately allows us to get the following Euclidean variant of Theorem \ref{th:intro-null-char}, which says that we can restrict the maps $\phi$ to only the orthogonal projections onto lower dimensional subspaces.

\begin{corollary} \label{c:euc-char}
Let $p > 0$, $P \geq p$ be an integer, $n$ be the largest integer smaller than $p$, and $E \subset \R^P$ be so that $0 < \cH^p(E) < \infty$. Then $E$ is $p$-unrectifiable if and only if there is a $\kappa > 0$, a subset $S \subset E$ for which $\cH^p(S) > 0$, and a $n$ dimensional subspace $W$ so that $\cH^1(S \cap \mathrm{Im}(\gamma)) = 0$ for all biLipschitz $\gamma : K \to \R^P$ where $K \subset \R$ is compact and
  $$d(\gamma'(t),W) \geq (1-\kappa) \|\gamma'(t)\|$$
  for almost every $t \in K$.
\end{corollary}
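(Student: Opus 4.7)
My plan is to deduce both implications from Theorem~\ref{p:david}: the forward direction directly, and the converse via a Lipschitz parametrization plus Fubini argument. For the forward implication, I use the standard decomposition $E = E_r \sqcup E_u$ into a $p$-rectifiable part $E_r$ and a purely $p$-unrectifiable part $E_u$. Since $E$ is not $p$-rectifiable, $\cH^p(E_u) > 0$, and so Theorem~\ref{p:david} applied to $E_u$ with any fixed $\kappa > 0$ yields a countable Borel decomposition $E_u = \bigcup_i E_i$ with associated $n$-planes $W_i$. Countable additivity picks out some $E_{i_0}$ with $\cH^p(E_{i_0}) > 0$; setting $S := E_{i_0}$ and letting $W$ be the linear direction of $W_{i_0}$ completes this direction.

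For the converse I argue by contrapositive: assuming $E$ is $p$-rectifiable, I exhibit, for every $\kappa > 0$, every $n$-dimensional subspace $W$, and every $S \subseteq E$ with $\cH^p(S) > 0$, a biLipschitz curve $\gamma$ with $\cH^1(\mathrm{Im}(\gamma) \cap S) > 0$ satisfying $d(\gamma'(t), W) \geq (1-\kappa)\|\gamma'(t)\|$ almost everywhere. Non-integer $p$ is handled automatically since $\cH^p(E) > 0$ would then contradict rectifiability; thus assume $p \in \N$ and $n = p - 1$. By Lemma~\ref{l:kirchheim}, $S$ admits a biLipschitz parametrization $g : A \to S$ with $A \subset \R^p$ Borel and $\cH^p(g(A)) > 0$. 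For $\cH^p$-a.e.\ $a \in A$, Rademacher gives an injective derivative $Dg(a) : \R^p \to \R^P$, and the dimension count
\[
  \dim\bigl(Dg(a)(\R^p) \cap W^\perp\bigr) \;\geq\; p + (P - n) - P \;=\; 1
\]
lets me pick a Borel unit vector $u(a) \in \R^p$ with $Dg(a) u(a) \in W^\perp$; the biLipschitz property of $g$ bounds $\|Dg(a) u(a)\|$ below by a positive constant.

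A Lusin/Egorov argument then extracts a positive-measure Borel $A' \subset A$ on which $u(\cdot)$ lies within $\epsilon$ of a fixed unit $u_0 \in \R^p$ and $\|Dg(a) u(a)\| \geq c$ for some $c > 0$. Decomposing $Dg(a) u_0 = Dg(a) u(a) + Dg(a)(u_0 - u(a))$ into its components in $W^\perp$ and $W$, the angle estimate $d(Dg(a) u_0, W) \geq (1-\kappa)\|Dg(a) u_0\|$ holds on $A'$ once $\epsilon$ is chosen small enough in $c$, $\Lip(g)$, and $\kappa$. Applying Fubini on $\R^p$ in the $u_0$-direction produces $y \in u_0^\perp$ for which $\ell_y := \{y + t u_0 : t \in \R\}$ meets $A'$ in positive $\cH^1$-measure, and then $\gamma(t) := g(y + t u_0)$, restricted to $\{t : y + t u_0 \in A'\}$, is the required transversal biLipschitz curve. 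The delicate step is this last passage from the $a$-dependent direction $u(a)$ to the fixed $u_0$: the transversality constant must survive the replacement, which is where the one-dimensional slack $p - n = 1$ (a consequence of the choice of $n$ as the largest integer below $p$) is essential.
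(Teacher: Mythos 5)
Your proof is correct. The forward direction is exactly the paper's: split off the purely unrectifiable part, apply Theorem~\ref{p:david} to it, and extract a positive-measure piece $E_{i_0}$ together with (the direction of) its plane $W_{i_0}$. For the backward direction the paper writes nothing beyond the remark that it is ``not quite Remark 2.22 of \cite{bate2020purely}'' because the transversal fragments here live in the ambient $\R^P$ rather than in a parametrizing $\R^p$; your argument supplies precisely this missing content, and it is in effect the Euclidean specialization of the converse half of the proof of Theorem~\ref{th:null-char}. There the authors take a biLipschitz chart, replace $\phi\circ f$ by a $C^1$ map $g$, find a line direction in $\ker Dg$ and push it forward; you instead differentiate the chart itself, pick $u(a)$ with $Dg(a)u(a)\in W^\perp$ using the slack $p-n=1$, stabilize to a fixed $u_0$ by Lusin, and slice by Fubini. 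The only points requiring care --- that $Dg(a)$ (taken as the approximate derivative of a Lipschitz extension) is injective with a uniform lower bound at density points of $A$, and that $\gamma'(t)=Dg(y+tu_0)u_0$ for a.e.\ $t\in K$ --- are standard, and your angle estimate $\frac{c-\Lip(g)\epsilon}{c+\Lip(g)\epsilon}\geq 1-\kappa$ closes the argument.
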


We remark that the backwards direction of Corollary \ref{c:euc-char} is not quite Remark 2.22 of \cite{bate2020purely}. There it says that a $p$-rectifiable piece $S$ is not $\cH^1$-null to transversal fragments to $n$ dimensional subspaces {\em after} being pulled back to a parameterizing $\R^p$. In Corollary \ref{c:euc-char}, we are asking that $S$ is not $\cH^1$-null to transversal fragments of $n$ dimensional subspaces in the ambient $\R^P$.

Finding biLipschitz pieces of Lipschitz maps is related to  the concept of being {\it minimal in looking down} as discussed in 
 \cite{heinonen1997thirty, david1997fractured}.
An Ahlfors $p$-regular metric space $(X,d)$ is said to be a BPI (``big pieces of itself'') space if there are constants $C \geq 1$, $\theta > 0$ so that for all $x_1,x_2 \in X$ and $0 < r_1,r_2 < \diam(X)$, there is a closed set $A \subset B(x_1,r_1)$ so that $\cH^p(A) \geq \theta r_1^p$ and there is a $C$-biLipschitz map $f : (A,\tfrac{r_2}{r_1}d) \to B(x_2,r_2)$.

Given two Ahlfors $p$-regular BPI spaces $X$ and $Y$, $X$ is said to {\em look down on} $Y$ if there is a positive measure closed subset $A \subset X$ and a Lipschitz function $f : A \to Y$ with positive $\cH^p$ image. If $Y$ also looks down on $X$, then $X$ and $Y$ are said to be {\em look-down equivalent}. A BPI space is said to be {\em  minimal in looking down} if it is look-down equivalent to any other space that it looks down upon.

The fact that Lipschitz maps on rectifiable metric spaces admit biLipschitz pieces immediately gives that rectifiable BPI spaces are minimal in looking down. However, Theorem \ref{th:main} does not necessarily imply that a purely unrectifiable BPI space $Y$ is not minimal in looking down since it is possible that there are other Lipschitz maps $Y \to X$ of positive $\cH^p$ image. As mentioned before, it was shown in \cite{LDLR-2017} that $\bH$, a BPI space, is not minimal in looking down. We thus have the following problem.
\begin{problem}
    Are rectifiable BPI spaces the {\em only} BPI spaces that are minimal in looking down?
\end{problem}

The only place we use the lower density assumption is in the application of \eqref{eq:Fr-defn} in the proof of Lemma \ref{l:not-bilip}. We thus also ask if the lower density assumption is necessary.

\begin{problem}
Is Theorem \ref{th:main} still true if one removes the $\cH^p$ almost everywhere $\theta_*^p(X,x) > 0$ hypothesis?
\end{problem}

We end by proving Lemma \ref{l:kirchheim}.

\begin{proof}[Proof of Lemma \ref{l:kirchheim}]
    We will not cover the notion of metric differential from \cite{Kirchheim-MD} as it is used only in this proof.

    First assume $A =  \R^p$. Lemma 4 of \cite{Kirchheim-MD} gives a decomposition $\R^p = N_1 \cup N_2 \cup E_1 \cup E_2 \cup ...$ where $f|_{E_i}$ are all biLipschitz, the metric differential does not exist for points of $N_1$, and the metric differential exists but is not a norm for points of $N_2$. By Theorem 1 of \cite{Kirchheim-MD}, $\cH^p(N_1) = 0$ so $\cH^p(f(N_1)) = 0$. Theorem 7 of \cite{Kirchheim-MD} (and especially its proof where it is shown that $\mathscr{J}(MD(f,x)) = 0$ when $x \in \mathscr{MD}(f) \setminus \mathscr{MD}_r(f)$) gives that $\cH^p(f(N_2)) = 0$. This finishes the $A = \R^p$ case.

    For general $A \subset \R^p$, we can first view $Y$ as a subset of $\ell_\infty$ via the Kuratowski embedding (after first truncating to the image of $f$ if necessary). We can then use the McShane-Whitney extension to Lipschitz extend each coordinate function of $f$ to $F : \R^p \to \ell_\infty$. Intersecting the biLipschitz decomposition of $F$ with $A$ then gives the lemma.
\end{proof}

\section{Preliminaries} \label{s:preliminaries}

All balls are open.

Let $(X,\rho)$ be a metric space. Following \cite{LDLR-2017}, we call a symmetric function $c: X \times X \to [0,\infty)$ such that $c \leq \rho$ a {\em cost function}. We define the set of {\em shortcuts} as
\begin{align*}
  \cS = \{(x,y) \in X \times X : c(x,y) < \rho(x,y)\}.
\end{align*}

Let $\cI$ denote the set of all finite length sequences in $X$. We call these itineraries. Given $x,y \in X$, we let $\cI(x,y) = \{(x_1,x_2,...,x_N) \in \cI : x_1 = x, x_N = y\}$. Given a sequence $\x = (x_1,...,x_N) \in \cI$, we let
\begin{align*}
  S(\x) &= \{(x_i,x_{i+1}) : 1\leq i \leq N, (x_i,x_{i+1}) \in \cS\}, \\
  S^c(\x) &= \{(x_i,x_{i+1}) : 1\leq i \leq N, (x_i,x_{i+1}) \notin \cS\},
\end{align*}
denote the shortcut and non-shortcut segments and
\begin{align*}
  c(\x) = \sum_{i=1}^{N-1} c(x_i,x_{i+1})
\end{align*}
denote the cost of the sequence.

We can now define
\begin{align*}
  d : X \times X &\to [0,\infty), \\
  (x,y) &\mapsto \inf \{c(\x) : \x \in \cI(x,y)\}.
\end{align*}
That $d$ is symmetric and satisfies the triangle inequality follows easily, but it may be that $d(x,y) = 0$ for some $x \neq y$. Nevertheless, we will still define balls with respect to $d$ as
\begin{align*}
  B_d(x,r) := \{y \in X : d(x,y) < r\}.
\end{align*}
It also easily follows that $d \leq \rho$ so that $B_\rho(x,r) \subseteq B_d(x,r)$.

An itinerary $(x_1,...,x_N)$ is said to be {\it alternating} if $N \in 2\N$, $(x_i,x_{i+1}) \in \cS$ if and only if $i \in 2\N$, and any shortcut is used at most once. We let $\cI_A(x,y)$ denote all the alternating itineraries from $x$ to $y$. A simple triangle inequality argument then gives that
\begin{align*}
  d(x,y) = \inf \{c(\x) : \x \in \cI_A(x,y)\},
\end{align*}
and so it suffices to study only itineraries from $\cI_A$.

Given a function $f : (X,d_X) \to (Y,d_Y)$ and $x \in X$, we define the pointwise Lipschitz constant as
\begin{align*}
  \Lip(\gamma,x) := \limsup_{y \to x} \frac{d_Y(f(x),f(y))}{d_X(x,y)}.
\end{align*}
This satisfies a chain rule inequality $\Lip(f \circ g,x) \leq \Lip(f,g(x)) \Lip(g,x)$.

Given a Lipschitz $\phi : X \to \R^n$ and a $0 < \theta < 1$, we define the $\theta$-transversal cone of $\phi$ at $x$ as
\begin{align*}
  E_\phi(x,\theta) := \{y \in X : |\phi(x) - \phi(y)| \leq \theta \rho(x,y)\}.
\end{align*}

\begin{lemma} \label{l:transverse-point}
  Let $0 < \theta < 1$, $0 < k < p$, and $\phi : X \to \R^k$ be Lipschitz. Then, for $\cH^p$-a.e. $x \in X$, we have that
  \begin{align*}
    E_\phi(x,\theta) \cap (B(x,r) \setminus \{x\}) \neq \emptyset, \qquad \forall r > 0.
  \end{align*}
\end{lemma}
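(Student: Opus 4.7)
The plan is to show that the set of bad points
$$Z := \{x \in X : \exists\, r > 0,\ E_\phi(x,\theta) \cap (B(x,r) \setminus \{x\}) = \emptyset\}$$
satisfies $\cH^p(Z) = 0$. First I would write $Z = \bigcup_{n \in \N} Z_n$ where $Z_n := \{x \in X : E_\phi(x,\theta) \cap (B(x,1/n) \setminus \{x\}) = \emptyset\}$, and argue $\cH^p(Z_n) = 0$ for each $n$; note that $Z_n \subseteq Z_{n+1}$ since shrinking the ball can only keep the intersection empty, and any $r > 0$ exceeds some $1/n$.

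Next, for each $n$, I would cover $Z_n$ by countably many subsets of diameter $< 1/n$, e.g.\ $Z_{n,j} := Z_n \cap B(z_j, 1/(2n))$ for a countable dense subset $\{z_j\} \subset X$. For any two distinct points $x, y \in Z_{n,j}$ we have $\rho(x,y) < 1/n$, so $y \in B(x, 1/n) \setminus \{x\}$, and the definition of $Z_n$ forces $y \notin E_\phi(x,\theta)$, i.e.\
$$|\phi(x) - \phi(y)| > \theta\, \rho(x,y).$$
Combined with the Lipschitz upper bound on $\phi$, this shows that $\phi|_{Z_{n,j}} : Z_{n,j} \to \R^k$ is biLipschitz with inverse Lipschitz constant at most $1/\theta$.

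Finally, since $k < p$, every bounded subset of $\R^k$ has vanishing $\cH^p$-measure (a cover by $\cO(\delta^{-k})$ balls of radius $\delta$ gives $\cH^p_\delta \lesssim \delta^{p-k} \to 0$), so $\cH^p(\phi(Z_{n,j})) = 0$. As biLipschitz maps preserve $\cH^p$-nullness, $\cH^p(Z_{n,j}) = 0$, and summing over $j$ and then $n$ yields $\cH^p(Z) = 0$. I expect no real obstacle here; the only mildly delicate point is that the condition defining $Z_n$ is pointwise at $x$ and thus formally asymmetric in $x$ and $y$, but the resulting inequality $|\phi(x)-\phi(y)| > \theta\rho(x,y)$ is symmetric, so restricting to pieces of diameter less than $1/n$ makes the witness available simultaneously for every pair of points in the piece.
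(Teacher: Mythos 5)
Your proof is correct and follows essentially the same approach as the paper: both arguments reduce to the observation that on a piece of the bad set whose diameter is smaller than the failure radius, $\phi$ is biLipschitz onto a subset of $\R^k$, which is impossible on a set of positive $\cH^p$-measure when $k<p$. The only difference is organizational — you run a direct countable decomposition showing nullness, while the paper argues by contradiction after passing to a subset with uniform $r_x$ and small diameter.
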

Note that this does not use that $E$ is purely unrectifiable.
\begin{proof}
  Suppose for contradiction that there is some $E \subset X$ so that $\cH^p(E) > 0$ and for all $x \in E$, there is some $r_x > 0$ so that $E_\phi(x,\theta) \cap (B(x,r_x) \setminus \{x\}) = \emptyset$ for all $x \in E$. After possibly passing to a positive measure subset of $E$, we may suppose that there is some $r > 0$ so that $r_x \geq r$ and $\diam(E) < r$. Then for any pair of distinct points $x,y \in E$, we cannot have that $x \in E_\phi(y,\theta)$ so that 
  \begin{align*}
    |\phi(x) - \phi(y)| > \theta \rho(x,y), \qquad \forall x,y \in E.
  \end{align*}
  As $\phi$ is Lipschitz, we then get that $\phi$ is biLipschitz on $E$. But we cannot have a biLipschitz map from a set of positive $\cH^p$-measure into $\R^k$ when $k < p$, so we get a contradiction.
\end{proof}

Applying Lemma \ref{l:transverse-point} to a sequence $r_n \to 0$ immediately gives the following corollary.

\begin{corollary} \label{c:transverse-point}
  Let $0 < \theta < 1$, $0 < k < p$, and $\phi : X \to \R^k$ be Lipschitz. For $\cH^p$-a.e. $x \in X$, there is a sequence $\{x_n\}_{n \in \N} \subset X \setminus \{x\}$ so that $x_n \to x$ so that $x_n \in E_\phi(x,\theta)$ for all $n$.
\end{corollary}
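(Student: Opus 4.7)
The plan is to apply Lemma \ref{l:transverse-point} directly and then diagonalize over a sequence of radii. By Lemma \ref{l:transverse-point}, there is a Borel set $X_0 \subseteq X$ with $\cH^p(X \setminus X_0) = 0$ such that for every $x \in X_0$ and every $r > 0$, the set $E_\phi(x,\theta) \cap (B(x,r) \setminus \{x\})$ is non-empty. Note that, crucially, the quantifier ``for all $r > 0$'' already sits inside the ``for $\cH^p$-a.e.\ $x$'' statement, so there is no additional countable intersection argument required — the single application of Lemma \ref{l:transverse-point} already yields the desired full-measure set $X_0$.

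Next, I would fix any $x \in X_0$ and, for each $n \in \N$, take the Lemma's conclusion with the specific choice $r = 1/n$ to produce some point $x_n \in E_\phi(x,\theta) \cap (B(x,1/n) \setminus \{x\})$. By construction $x_n \neq x$, $x_n \in E_\phi(x,\theta)$, and $\rho(x,x_n) < 1/n$, so $x_n \to x$ in $(X,\rho)$. This is the required sequence. There is essentially no obstacle here: the corollary is a direct quantifier-shuffling consequence of the lemma, since the lemma already furnishes nontrivial points in arbitrarily small balls. The only mild subtlety is that the $x_n$ need not be distinct as stated, but since $x_n \to x$ and $x_n \ne x$ one can pass to a subsequence to ensure this if desired; no further hypothesis is needed.
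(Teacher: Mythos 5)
Your argument is correct and matches the paper's, which simply notes that applying Lemma \ref{l:transverse-point} to a sequence $r_n \to 0$ immediately gives the corollary. Your observation that the ``for all $r>0$'' quantifier already lives inside the almost-everywhere statement, so no extra countable intersection is needed, is exactly the right point.
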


\subsection{Alberti representations}

Given a metric space $X$, a (curve) fragment is a biLipschitz function $f : K \to X$ where $K \subset \R$ is compact. We let $\Gamma(X)$ denote all the fragments of $X$. By identifying a $\gamma \in \Gamma(X)$ via its compact graph in $\R \times X$, we can metrize $\Gamma(X)$ using the Hausdorff metric. We also let $\Pi(X)$ denote all the fragments of $X$ whose domain is a closed interval. Note that $\Pi(X)$ may be trivial for topological reasons.
We will often identify a fragment with its image; for example we will write $\cH^1(\gamma)$ etc. 

Given any Lipschitz $\gamma : D \to X$ where $D \subset \R$ is Borel, $\Lip(\gamma,x)$ is defined and bounded for almost every $x \in D$. We then define its parametric length as
\begin{align*}
  \ell(\gamma) := \int_D \Lip(\gamma,x) ~dx.
\end{align*}
The area formula gives that $\ell(\gamma)\geq \cH^1(\gamma)$ but there is no quantitative reverse bound. However, we do have that $\cH^1(\gamma)=0$ then $\ell(\gamma)=0$.

Given a Lipschitz map $\phi : X \to \R^n$ and $0 < \kappa < 1$, we define
$$T_\kappa(\phi) = \{\gamma \in \Gamma(X): \|(\phi \circ \gamma)'(t)\| \leq \kappa \Lip(\phi,\gamma(x)) \Lip(\gamma,x)~ a.e. ~t \in \dom(\gamma)\}.$$
Note that if $n = 0$, then $T_\kappa(\phi) = \Gamma(X)$.

\begin{definition}
  Let $\bP$ be a probability measure on $\Gamma(X)$ and for each $\gamma \in \Gamma(X)$, let $\mu_\gamma$ be a measure on $X$ so that $\mu_\gamma \ll \cH^1|_\gamma$ and $\gamma \mapsto \mu(Y)$ is Borel measurable for every Borel $Y \subset X$. The pair $(\bP,\{\mu_\gamma\}_{\gamma})$ is an {\em Alberti representation} of a measure $\mu$ on $X$ if for every Borel $B \subset X$, we have
  \begin{align*}
    \mu(B) = \int_{\Gamma(X)} \mu_\gamma(B) ~d\bP(\gamma).
  \end{align*}
  Given an Alberti representation $\cA = (\bP,\{\mu_\gamma\})$, we say a property holds for almost every $\gamma \in \cA$ if it holds for $\bP$-a.e. $\gamma \in \Gamma(X)$.
\end{definition}

Following Definition 5.7 of \cite{bate-jams}, given a Lipschitz $\phi : X \to \R^n$ and a $\delta > 0$, we say that $\gamma \in \Gamma(X)$ has $\phi$-speed $\delta$ if for almost every $t \in \dom(\gamma)$,
\begin{align*}
  \|(\phi \circ \gamma)'(t)\| \geq \delta \Lip(\phi,\gamma(x)) \Lip(\gamma,x).
\end{align*}
Note that this is essentially the opposite condition of $T_\delta(\phi)$.
We say an Alberti representation $\cA$ has $\phi$-speed $\delta$ if almost every $\gamma \in \cA$ has $\phi$-speed $\delta$.

Given $w \in \R^n$ and $0 < \theta < 1$, we define the cone of width $\theta$ centered on $w$ to be the set
\begin{align*}
  C(w,\theta) = \{v \in \R^n : w \cdot v \geq (1-\theta)\|v\|\}.
\end{align*}
Given a Lipschitz $\phi : X \to \R^n$ and a cone $C \subset \R^n$, we say $\gamma \in \Gamma(X)$ is in the $\phi$-direction $C$ if for almost every $t \in \dom(\gamma)$, we have
\begin{align*}
  (\phi \circ \gamma)'(t) \in C.
\end{align*}
An Alberti representation $\cA$ is in $\phi$-direction $C$ if almost every $\gamma \in \cA$ is in $\phi$-direction $C$.

Cones $C_1, ..., C_m \subset \R^n$ are linearly independent if for any nonzero vectors $v_i \in C_i$, we have that $v_1,...,v_m$ are linearly independent. Given a Lipschitz $\phi : X \to \R^n$, we say that Alberti representations $\cA_1,...,\cA_m$ are $\phi$-independent if there are linearly independent $C_1,...,C_m \subset \R^n$ so that each $\cA_i$ is in $\phi$-direction $C_i$. We say a measure $\mu$ has $n$ independent Alberti representations if there is a Borel decomposition $X = \bigcup_{i \in \N} A_i$ and Lipschitz maps $\phi_i : A_i \to \R^n$ where each $\mu|_{A_i}$ has $n$ $\phi_i$-independent Alberti representations.

\section{Constructing independent Alberti representations}

We begin by defining a family of singular sets with respect to Lipschitz maps.

\begin{definition}
    Let $\phi : X \to \R^n$ be Lipschitz. For any $\kappa > 0$, we define $\tilde{D}(\phi,\kappa)$ to be the set of all $S \subset X$ for which $\cH^1(S \cap \gamma) = 0$ for all $\gamma \in T_\kappa(\phi)$. We define $\tilde{D}(\phi)$ to be the sets $S$ for which there exists a Borel decomposition $S = S_1 \cup S_2 \cup ...$ and positive $\kappa_1,\kappa_2,...$ so that $S_i \in \tilde{D}(\phi,\kappa_i)$.
    
    Finally, given $n \in \N$, we define $\tilde{D}(n)$ to be the sets $S \subset X$ for which there is a Borel decomposition $S = S_1 \cup S_2 \cup ...$ and Lipschitz $\phi_i : S_i \to \R^{n_i}$ where $n_i \leq n$ and $S_i \in \tilde{D}(\phi_i)$.
\end{definition}

The following proposition lets us construct additional Alberti representations that are independent to an existing collection. This can be viewed as an intrinsic version of Proposition 2.10 of \cite{bate2020purely}. One can see that the proposition is just a restatement of Proposition \ref{p:intro-extend}.

\begin{proposition} \label{p:extend-alberti}
  Let $(X,d,\mu)$ be a compact metric measure space and $\phi : X \to \R^n$ a Lipschitz map that has $n$ $\phi$-independent Alberti representations. Then there is a Borel decomposition
  \begin{align*}
    X = S \cup \bigcup_{i \in \N} A_i,
  \end{align*}
  where $S \in \tilde{D}(\phi)$ and for each $i \in \N$ there is a Lipschitz $\phi_i : X \to \R^{n+1}$ for which $\phi_i|_{A_i}$ has $n+1$ $\phi_i$-independent Alberti representations.
\end{proposition}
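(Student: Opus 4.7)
The plan is to peel off a maximal singular set $S \in \tilde D(\phi)$, build on the remainder an Alberti representation by fragments nearly tangent to the kernel of $\phi$, and introduce an $(n+1)$-st coordinate witnessing the new direction. First I would extract $S$: since $\tilde D(\phi)$ is closed under countable unions by definition, a $\mu$-measure exhaustion yields a Borel $S \in \tilde D(\phi)$ that is \emph{maximal}, meaning that for every Borel $B \subseteq X \setminus S$ with $\mu(B) > 0$ there exist $\kappa > 0$ and $\gamma \in T_\kappa(\phi)$ with $\cH^1(B \cap \gamma) > 0$; otherwise $B$ would lie in $\tilde D(\phi,\kappa)$ for every $\kappa$ and $S \cup B \in \tilde D(\phi)$ would contradict maximality. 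Set $A := X \setminus S$.

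Using the positivity guaranteed by maximality of $S$, I would run the standard fragment-to-Alberti construction (as in Section~5 of \cite{bate-jams}) to obtain a Borel decomposition $A = \bigcup_i A_i$, parameters $\kappa_i > 0$, and Alberti representations $\cA_i = (\bP_i, \{\mu_\gamma\})$ of $\mu|_{A_i}$ concentrated on $T_{\kappa_i}(\phi)$. To produce the extra coordinate, for each $i$ I would pick a countable family dense in $\supp \bP_i$, define $\psi_i$ on the union of their images via arc length (with offsets chosen so the resulting function is Lipschitz on this union), and extend by McShane--Whitney to a Lipschitz $\psi_i: X \to \R$. A Lusin-type density argument---possibly at the cost of a further refinement of $\{A_i\}$---shows that for $\bP_i$-almost every $\gamma \in \cA_i$ one has $|(\psi_i \circ \gamma)'(t)| \geq c_i \Lip(\gamma,t)$ for a.e.\ $t$. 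Set $\phi_i := (\phi, \psi_i)$.

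Independence then follows from cone geometry: the original $n$ $\phi$-independent Alberti representations of $\mu|_{A_i}$ lift to $\phi_i$-independent ones whose cones sit inside $\R^n \times \{0\}$, while for $\cA_i$ the bound $\|(\phi \circ \gamma)'\| \le \kappa_i \Lip(\phi,\gamma(t)) \Lip(\gamma,t)$ combined with $|(\psi_i \circ \gamma)'| \ge c_i \Lip(\gamma,t)$ places $(\phi_i \circ \gamma)'$ in a narrow cone around the $e_{n+1}$ axis. Taking $\kappa_i$ small enough relative to $c_i$ and the opening of the existing cones yields $(n+1)$-fold $\phi_i$-independence on $A_i$.

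The main obstacle is producing a single Lipschitz $\psi_i$ on all of $X$ whose derivative along $\bP_i$-almost every $\gamma$ is bounded below by $c_i \Lip(\gamma,\cdot)$, not merely along the dense subfamily used to define it. This is essentially the intrinsic analogue of Proposition~2.10 of \cite{bate2020purely}, and requires a careful combination of Lusin approximation, density of fragments in $\supp \bP_i$, and a further subdivision of the pieces $A_i$ to absorb exceptional $\mu$-null sets.
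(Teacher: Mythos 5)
Your overall architecture (peel off a maximal set in $\tilde D(\phi)$, then manufacture an $(n+1)$-st coordinate along the transversal fragments) matches the shape of the paper's argument, but the step you yourself flag as ``the main obstacle'' is precisely the content of the proof, and your proposed route through it would not work. Defining $\psi_i$ by arc length on a countable dense subfamily of $\supp \bP_i$ and then extending by McShane--Whitney fails already at the definition stage: two fragments of the family whose images cross, nearly touch, or overlap with reversed orientation impose incompatible arc-length values at nearby points, and no choice of additive offsets makes the union Lipschitz in general. Even granting a Lipschitz $\psi_i$ on the dense subfamily, the lower bound $|(\psi_i\circ\gamma)'|\geq c_i\Lip(\gamma,\cdot)$ is a pointwise a.e.\ condition that does not pass to Hausdorff limits of fragments, so ``density in $\supp\bP_i$'' buys you nothing. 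The paper inverts the logic: it isometrically embeds $X$ into $\R^n\oplus_\infty\ell_\infty$ (so that $\phi$ becomes linear) and fixes \emph{in advance} the countable norming family $\{\xi_i\}$ of coordinate functionals. Lemma~\ref{l:banach-partition} shows that along \emph{any} Lipschitz curve, a.e.\ point has some $\xi_i$ capturing a $(1-\epsilon)$ fraction of the speed; one then decomposes the space (not the curve family) according to which $\xi_i$ works and takes $\phi_i=(\phi,\xi_i)$. No new function is ever constructed from the representation, which is exactly how the obstacle is avoided.

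Two further gaps. First, to run ``the standard fragment-to-Alberti construction'' (Lemma~5.2 of \cite{bate-jams}) you need the curve class to be closed in the Hausdorff metric on fragments; $T_\kappa(\phi)$, being defined by a.e.\ pointwise derivative conditions, is not. The paper replaces it by the secant-condition classes $\tilde T_{\delta,i}$ on the convex hull (closed by Lemma~\ref{l:T-compact}, where linearity of $\phi$ and $\xi_i$ is essential) and bridges back to the pointwise class via Lemma~\ref{l:invisible}; your maximality argument only produces charging fragments in the non-closed class. Second, your independence argument is off: the lifted cones of the original $n$ representations do not ``sit inside $\R^n\times\{0\}$'' --- the last coordinate $(\xi\circ\gamma)'$ is merely bounded, so the lifted cones are of the form $C_j\times[-1,1]$ and can contain nearly vertical vectors unless the $\R^n$-components are bounded \emph{below}. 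This forces a preliminary decomposition (Corollary~5.9 of \cite{bate-jams}) guaranteeing $\phi$-speed $\geq\delta$ and a pinching of $\Lip(\phi,\cdot)$, after which the quantitative matrix bound of Lemma~\ref{l:invert-LI} shows $\kappa\ll\delta$ suffices. ``$\kappa_i$ small relative to the opening of the existing cones'' is not enough without that speed lower bound.
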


The following lemma will allow us to find additional coordinate functions.

\begin{lemma} \label{l:banach-partition}
  Let $Y$ be a Banach space for which there is a countable subset $\{\xi_i\}_i \subset \overline{B_{Y^*}}$ so that 
  \begin{align}
    \|x\| = \sup_{i \in \N} \xi_i(x), \qquad \forall x \in Y. \label{eq:norming}
  \end{align}
  Then for every $\epsilon > 0$ and Lipschitz $\gamma : D \to Y$ where $D$ is Borel, there is a Borel decomposition $D = N \cup E_1 \cup E_2 \cup ...$ where $\mathcal{L}(N) = 0$ and $(\xi_i \circ \gamma)'(t) \geq (1-\epsilon) \Lip(\gamma,t)$ for a.e. $t \in E_i$.
\end{lemma}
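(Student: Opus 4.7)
The plan is to reduce the lemma to the pointwise a.e.\ identity on $D$:
\[
\sup_{i \in \N} (\xi_i \circ \gamma)'(t) \;=\; \Lip(\gamma,t). \qquad (\star)
\]
Once $(\star)$ is established, for any fixed $\varepsilon > 0$ one sets
\[
E_i \;:=\; \{\, t \in D : (\xi_i \circ \gamma)'(t) \geq (1-\varepsilon)\Lip(\gamma,t) \,\} \;\setminus\; \bigcup_{j < i} E_j
\]
and takes $N$ to be the complement of $\bigcup_i E_i$ in $D$. These are Borel because the $(\xi_i \circ \gamma)'$ are Borel (as derivatives of Lipschitz scalar functions), and $N$ has Lebesgue measure zero since the $E_i$ cover the full-measure set on which $(\star)$ holds.

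To prove $(\star)$, extend each real-valued Lipschitz function $\xi_i \circ \gamma$ via McShane to a function $g_i : \R \to \R$ with the same Lipschitz constant. Each $g_i$ is differentiable almost everywhere, and at every Lebesgue density point of $D$ the derivative $g_i'(t)$ coincides with the intrinsic derivative $(\xi_i \circ \gamma)'(t)$. Set $c_i(t) := g_i'(t)$ and
\[
h(t) \;:=\; \sup_{i \in \N} c_i(t),
\]
a Borel function on $\R$ bounded by $\Lip(\gamma)$ since $\|\xi_i\|_{Y^*} \leq 1$. The norming hypothesis is used exactly once, through the identity
\[
\|\gamma(v) - \gamma(u)\| \;=\; \sup_{i} \xi_i(\gamma(v) - \gamma(u)) \;=\; \sup_{i} (g_i(v) - g_i(u)) \;=\; \sup_i \int_u^v c_i(s)\,ds,
\]
valid for $u < v$ in $D$. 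Interchanging the countable supremum with the integral gives
\[
\|\gamma(v) - \gamma(u)\| \;\leq\; \int_u^v h(s)\,ds,
\]
and by the symmetry of the norm the same estimate holds for arbitrary $u,v \in D$ on the interval $[\min(u,v),\max(u,v)]$.

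Dividing by $|v - t|$ and letting $v \to t$ through $D$, at any Lebesgue point $t$ of $h$ the right-hand side tends to $h(t)$; hence $\Lip(\gamma,t) \leq h(t)$. The reverse inequality $h(t) \leq \Lip(\gamma,t)$ is immediate from $|c_i(t)| \leq \Lip(\gamma,t)$ for every $i$. Since almost every $t \in D$ is simultaneously a Lebesgue density point of $D$ and a Lebesgue point of $h$, the identity $(\star)$ follows. The subtlety this approach sidesteps is that $\gamma$ need not admit any classical $Y$-valued derivative; differentiation is performed only on the scalar coordinates $g_i$, and the norm $\|\gamma(v)-\gamma(u)\|$ is reconstructed from these scalars via the norming family rather than from any notion of metric or Fréchet derivative of $\gamma$ itself.
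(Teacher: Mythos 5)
Your proof is correct, and it takes a genuinely more direct route than the paper's. The paper argues by exhaustion: it shows that every positive-measure subset $S\subset D$ contains a positive-measure piece on which a single $\xi_i$ works, by choosing a density point $x$ of $S$ that is also a Lebesgue point of $\Lip(\gamma,\cdot)$, picking a nearly extremal $y\in S$ with $\|\gamma(y)-\gamma(x)\|>(m-\tfrac{\epsilon}{2})|y-x|$, selecting via \eqref{eq:norming} a functional $\xi_i$ that nearly attains this increment, and deriving a contradiction from the assumption that $(\xi_i\circ\gamma)'<(1-\epsilon)\Lip(\gamma,\cdot)$ a.e.\ on $S\cap[x,y]$. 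You instead establish the sharper pointwise identity $\sup_i(\xi_i\circ\gamma)'(t)=\Lip(\gamma,t)$ for a.e.\ $t\in D$, from which the decomposition is read off as (essentially) level sets, with $\epsilon$ entering only at the last moment to guarantee the supremum is $(1-\epsilon)$-attained by some index. The two arguments rest on the same two ingredients --- applying the norming family to increments $\gamma(v)-\gamma(u)$ and then invoking Lebesgue differentiation at density points --- but your packaging via McShane extensions $g_i$, the envelope $h=\sup_i g_i'$, and the inequality $\|\gamma(v)-\gamma(u)\|\leq\int_u^v h$ is cleaner and gives a slightly stronger statement for free; the cost is that you must justify the a.e.\ identification of $g_i'$ with the derivative of $\xi_i\circ\gamma$ relative to $D$ and the interchange of the countable supremum with the integral, both of which you handle correctly (the latter in the harmless direction $\sup_i\int c_i\leq\int\sup_i c_i$). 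The paper's exhaustion argument avoids ever stating the exact identity and so sidesteps those points, at the price of a less transparent structure.
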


\begin{proof}
  Suppose without loss of generality that $\gamma$ is 1-Lipschitz. We will show for every positive measure subset of $S \subset D$, there is a Borel $E \subset S$ of positive measure for which there is some $i$ so that $(\xi_i \circ \gamma)'(t) \geq (1-\epsilon) \Lip(\gamma,t)$ for all $t \in E$. The lemma follows from this by standard measure theoretic techniques.

  First, as $t \mapsto \Lip(\gamma,t)$ is measurable, we can find a point $x \in S$ that is both a density point of $S$ and a Lebesgue point of $\Lip(\gamma,\cdot)$. Let $m = \Lip(\gamma,x)$ and choose $r_0 > 0$ sufficiently small so that
  \begin{align}
    |[x-r,x+r] \setminus S| &< \tfrac{\epsilon}{10} r, \label{eq:S-density} \\ 
    \left|\left\{t \in S \cap [x-r,x+r] : \Lip(\gamma,t) > m + \tfrac{\epsilon}{10} \right\}\right| &< \tfrac{\epsilon}{10} r, \label{eq:Lip-lebesgue}
  \end{align}
  for all $r < r_0$.

  Choose some $y \in S \cap [x-r_0,x+r_0]$ so that $\|\gamma(y) - \gamma(x)\| > (m - \tfrac{\epsilon}{2}) |y-x|$. Let $r = |y-x|$ and suppose without loss of generality that $y > x$. By \eqref{eq:norming}, there is some $\xi_i$ so that
  \begin{align}
    \xi_i(\gamma(y)) - \xi_i(\gamma(x)) = \xi_i(\gamma(y) - \gamma(x)) > (m - \tfrac{\epsilon}{2})r. \label{eq:big-psi}
  \end{align}

  Suppose for contradiction
  \begin{align}
    (\xi_i \circ \gamma)'(t) < (1-\epsilon) \Lip(\gamma,t) \quad \mathrm{a.e.} ~t \in S \cap [x,y]. \label{eq:big-contraction}
  \end{align}
  Note that $\xi_i \circ \gamma$ is 1-Lipschitz and so
  \begin{align*}
    \xi_i(\gamma(y)) - \xi_i(\gamma(x)) &\leq \int_{S \cap [x,y]} (\xi_i \circ \gamma)'(t) ~dt + |[x,y] \setminus S| \\
    &\overset{\eqref{eq:S-density} \wedge \eqref{eq:big-contraction}}{\leq} (1-\epsilon) \int_{S \cap [x,y]} \Lip(\gamma,t) ~dt + \tfrac{\epsilon}{10} r \\
    &\overset{\eqref{eq:Lip-lebesgue}}{\leq} (1-\epsilon) \left[ (m + \tfrac{\epsilon}{10}) r + \tfrac{\epsilon}{10} r \right] + \tfrac{\epsilon}{10} r \\
    &< (m - \tfrac{\epsilon}{2})r.
  \end{align*}
  This is a contradiction of \eqref{eq:big-psi}. Thus, \eqref{eq:big-contraction} is false, which is what we wanted to show.
\end{proof}

We now begin proving the proposition. First, we may assume by rescaling that $\phi$ is 1-Lipschitz. We may also assume $X$ is a subset of some Banach space $Y$ satisfying \eqref{eq:norming}.
This is always possible if $X$ is separable. Indeed, there is always an isometric embedding $\iota: X \hookrightarrow \ell_\infty$. One can then simply take $\{\xi_i\}_i$ to be the coordinate functionals $\xi_i(x) = x_i$ (and their negatives).

We extend $\phi$ onto $Y$ and can also assume $\phi$ is linear on $Y$. Indeed, we can further embed $X$ isometrically in $\R^n \oplus_\infty \ell_\infty$ via $x \mapsto (\phi(x),\iota(x))$. As $\phi$ is 1-Lipschitz, this embedding is isometric. We can then view $\phi$ as the linear projection map onto the $\R^n$ component. It is clear that \eqref{eq:norming} is still satisfied on $Y = \R^n \oplus_\infty \ell_\infty$.

We now fix such a $\{\xi_i\}_{i \in \N}$ satisfying \eqref{eq:norming} for the remainder of the section. Let $\tilde{X}$ be the compact convex hull of $X$.
We define for $\delta > 0$ and $i \in \N$
\begin{align*}
  \tilde{T}_{\delta} &= \left\{\gamma \in \Gamma(X) : |(\phi \circ \gamma)'(t)| < \delta \Lip(\gamma,t), ~a.e. ~t \in \dom(\gamma)\right\}, \\
  \tilde{T}_{\delta,i} &= \left\{\gamma \in \Pi(\tilde{X}) : \frac{|\phi(\gamma(s)) - \phi(\gamma(t))|}{\|\gamma(s) - \gamma(t)\|} \leq \delta, \frac{\xi_i(\gamma(t)) - \xi_i(\gamma(s))}{\|\gamma(t) - \gamma(s)\|} \geq \frac{1}{2}, ~\forall s < t \in \dom(\gamma)\right\}.
\end{align*}

It is possible for $\tilde{T}_{\delta,i}$ to be empty for certain $\delta$ and $i$.

\begin{lemma} \label{l:T-compact}
  $\tilde{T}_{\delta,i}$ is closed.
\end{lemma}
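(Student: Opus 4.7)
The plan is to show that $\tilde{T}_{\delta,i}$ is sequentially closed in $\Pi(\tilde{X})$ by passing each of the two defining inequalities to a Hausdorff-limit. So I would start by taking a sequence $(\gamma_n)_{n\in\N} \subset \tilde{T}_{\delta,i}$ that converges in the Hausdorff metric (on compact graphs in $\R \times \tilde X$) to some $\gamma \in \Pi(\tilde X)$, say with $\gamma : [a,b] \to \tilde X$ biLipschitz, and then fix an arbitrary pair $s < t$ in $[a,b]$ at which I want to verify the two inequalities for $\gamma$.

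The next step is a standard approximation using Hausdorff convergence of graphs: there exist $(s_n, \gamma_n(s_n))$ and $(t_n, \gamma_n(t_n))$ in $\mathrm{graph}(\gamma_n)$ with $(s_n, \gamma_n(s_n)) \to (s, \gamma(s))$ and $(t_n, \gamma_n(t_n)) \to (t, \gamma(t))$; in particular $s_n \to s$, $t_n \to t$, and since $s < t$ strictly we have $s_n < t_n$ for all sufficiently large $n$, so that the hypotheses of $\tilde{T}_{\delta,i}$ apply to the pair $(s_n, t_n) \in \dom(\gamma_n)$.

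Now I would pass to the limit. The key non-degeneracy is that $\gamma \in \Pi(\tilde X)$ is biLipschitz on $[a,b]$, so $\gamma(s) \neq \gamma(t)$ and hence $\|\gamma_n(s_n) - \gamma_n(t_n)\| \to \|\gamma(s) - \gamma(t)\| > 0$. Continuity of $\phi$, of $\xi_i$, and of the norm then lets me take limits in the two inequalities
\[
\frac{|\phi(\gamma_n(s_n)) - \phi(\gamma_n(t_n))|}{\|\gamma_n(s_n) - \gamma_n(t_n)\|} \leq \delta, \qquad \frac{\xi_i(\gamma_n(t_n)) - \xi_i(\gamma_n(s_n))}{\|\gamma_n(t_n) - \gamma_n(s_n)\|} \geq \tfrac12,
\]
yielding the same inequalities for $\gamma$ at $(s,t)$. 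Since $s<t$ in $[a,b]$ were arbitrary, $\gamma \in \tilde{T}_{\delta,i}$.

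The main point one has to be careful about is precisely this denominator: without a lower bound on $\|\gamma_n(s_n) - \gamma_n(t_n)\|$ the two quotients could a priori become indeterminate in the limit. This is resolved entirely by the biLipschitz hypothesis that is baked into membership in $\Pi(\tilde X)$, so no quantitative control on the biLipschitz constants of the $\gamma_n$ is needed; the proof is just a continuity argument once that nondegeneracy is in hand.
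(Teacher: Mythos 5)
Your proof is correct and is essentially the same as the paper's: both take a Hausdorff-convergent sequence, approximate the pair $(s,t)$ by points on the graphs of the $\gamma_n$, and pass to the limit in the two defining quotients, with the biLipschitz property of the limit $\gamma \in \Pi(\tilde X)$ guaranteeing the denominators stay bounded away from zero. Your remarks on the ordering $s_n < t_n$ and on the nondegeneracy of the denominator just make explicit what the paper leaves implicit.
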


\begin{proof}
  Let $\gamma_n \in \tilde{T}_{\delta,i}$ be a sequence of Lipschitz curves converging to some $\gamma \in \Pi(\tilde{X})$. For any two $t,s \in \dom(\gamma)$, there exist $t_m,s_m \in \dom(\gamma_m)$ so that $\gamma_m(t_m) \to \gamma(t)$ and $\gamma_m(s_m) \to \gamma(s)$. We then have that
  \begin{align*}
    \delta \geq \frac{|\phi(\gamma_m(t_m)) - \phi(\gamma_m(s_m))|}{\|\gamma_m(t_m) - \gamma_m(s_m)\|} \to \frac{|\phi(\gamma(t)) - \phi(\gamma(s))|}{\|\gamma(t) - \gamma(s)\|}.
  \end{align*}
  Similarly, $\frac{\xi_i(\gamma(t)) - \xi_i(\gamma(s))}{\|\gamma(t) - \gamma(s)\|} \geq \frac{1}{2}$ for $s < t$, proving the lemma.
\end{proof}

\begin{lemma} \label{l:invisible}
  If $S \subset X$ is such that for all $i \in \N$ and $\gamma \in \tilde{T}_{\delta,i}$, $\cH^1(S \cap \gamma) = 0$, then $\cH^1(S \cap \gamma) = 0$ for all $\gamma \in \tilde{T}_\delta$.
\end{lemma}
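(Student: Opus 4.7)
The plan is to apply Lemma~\ref{l:banach-partition} to decompose $\dom(\gamma)$ into pieces on which some $\xi_i \circ \gamma$ has near-maximal growth, reparameterize each piece by $\xi_i \circ \gamma$, and extend linearly in $\tilde{X}$ across the resulting gaps to produce a member of $\tilde{T}_{\delta,i}$ to which the hypothesis applies. Because $\tilde{T}_\delta$ imposes the strict inequality $|(\phi \circ \gamma)'(t)| < \delta \Lip(\gamma,t)$, I would first partition $\dom(\gamma) = \bigcup_{j \in \N} K_j$ by $K_j := \{t : |(\phi \circ \gamma)'(t)| \leq (1-1/j)\delta \Lip(\gamma,t)\}$, fix $j$, and apply Lemma~\ref{l:banach-partition} to $\gamma|_{K_j}$ with $\epsilon = 1/(2j)$ to obtain $K_j = N \cup \bigcup_{i} E_i$ with $(\xi_i \circ \gamma)'(t) \geq (1-1/(2j))\Lip(\gamma,t)$ a.e.\ on $E_i$. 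Since $\mathcal{L}(N) = 0$ forces $\cH^1(\gamma(N)) = 0$, it suffices to show $\cH^1(S \cap \gamma(E_i)) = 0$ for each $i$.

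The crux is a reparameterization. Fix $i$ and set $u := \xi_i \circ \gamma$, which is strictly increasing on a full-measure subset of $E_i$; after passing to a compact subset on which all relevant derivatives are continuous (via Lusin), we may assume $u|_{E_i}$ is injective. Define $\hat\gamma(v) := \gamma(u^{-1}(v))$ for $v \in u(E_i)$ and extend $\hat\gamma : I \to \tilde{X}$ to the closed interval $I := [\min u(E_i), \max u(E_i)]$ by linear interpolation on each gap, which is well-defined since $\tilde{X}$ is convex. The key point is that $\xi_i \circ \hat\gamma = \mathrm{id}$ on all of $I$ (preserved across gaps by linearity of $\xi_i$), whence the norming property \eqref{eq:norming} forces $\|\hat\gamma(v_2) - \hat\gamma(v_1)\| \geq v_2 - v_1$ for all $v_1 < v_2 \in I$ automatically. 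The $\xi_i$-monotonicity condition defining $\tilde{T}_{\delta,i}$ then reduces to the $2$-Lipschitz bound $\|\hat\gamma(v_2) - \hat\gamma(v_1)\| \leq 2(v_2 - v_1)$, while the $\phi$ condition reduces, via $\|\hat\gamma(v_2) - \hat\gamma(v_1)\| \geq v_2 - v_1$, to $|\phi(\hat\gamma(v_2)) - \phi(\hat\gamma(v_1))| \leq \delta(v_2 - v_1)$. Both are Lipschitz bounds on piecewise linear extensions and so reduce to their restrictions on $u(E_i)$, i.e., to the pairwise bounds $\|\gamma(t_2) - \gamma(t_1)\| \leq 2|u(t_2) - u(t_1)|$ and $|\phi(\gamma(t_2)) - \phi(\gamma(t_1))| \leq \delta |u(t_2) - u(t_1)|$ for $t_1, t_2 \in E_i$.

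These pairwise estimates hold on a small enough neighborhood of $\mathcal{L}$-a.e.\ $t_0 \in E_i$: at a density point that is also a Lebesgue point of $(\phi \circ \gamma)'$, $(\xi_i \circ \gamma)'$, and $\Lip(\gamma,\cdot)$, the pointwise ratios $c := |(\phi \circ \gamma)'(t_0)|/\Lip(\gamma,t_0) \leq (1-1/j)\delta$ and $u'(t_0)/\Lip(\gamma,t_0) \geq 1 - 1/(2j)$ propagate to pairwise ratios bounded by $c/(1 - 1/(2j)) + o(1) < \delta$ and $1/(1 - 1/(2j)) + o(1) < 2$ respectively, with genuine slack. A Vitali-type covering displays $E_i$ (up to an $\cH^1$-null image) as a countable union of such neighborhoods; each has a reparameterized extension $\hat\gamma_k \in \tilde{T}_{\delta,i}$, and summing $\cH^1(S \cap \hat\gamma_k(I_k)) = 0$ from the hypothesis yields $\cH^1(S \cap \gamma(E_i)) = 0$. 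The main obstacle I expect is in this final step: propagating pointwise derivative bounds to pairwise bounds for pairs $t_1, t_2 \in E_i$ lying on the same side of the reference $t_0$, where one-sided Lebesgue point estimates are insufficient; this should be handled by combining Lusin's theorem with a Whitney-type density argument to obtain uniform control of the three derivative functions within small neighborhoods.
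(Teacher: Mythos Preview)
Your approach is sound and shares the same skeleton as the paper's proof (which in turn follows \cite[Lemma~5.5]{bate-jams}): apply Lemma~\ref{l:banach-partition}, pass to a positive-measure subset on which pairwise difference-quotient bounds hold, extend linearly in $\tilde{X}$, and use linearity of $\phi$ and $\xi_i$ to push the bounds through the extension.  Your two departures are (i) reparameterizing by $u=\xi_i\circ\gamma$, which makes the $\xi_i$-monotonicity and the lower biLipschitz bound automatic at the price of introducing a new $2$-Lipschitz upper bound to verify, and (ii) covering $E_i$ by Vitali pieces and summing, whereas the paper argues by contradiction from a single positive-measure piece.  Neither change sidesteps the core technical step you flag: one still needs an Egorov/Lusin-type argument to obtain the pairwise bounds for \emph{all} nearby pairs in the chosen set, not just pairs containing the anchor $t_0$.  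The paper does exactly this, directly in the original parameter: it first restricts to a set where $\Lip(\gamma,\cdot)$ lies in a fixed dyadic window $[a,b]$ and the derivatives satisfy strict inequalities with slack (your $K_j$ plays the same role), and then passes to a positive-measure $D'$ on which the two-point estimates $a|t-t_0|\le\|\gamma(t)-\gamma(t_0)\|\le b|t-t_0|$, $|\phi(\gamma(t))-\phi(\gamma(t_0))|\le\delta' a|t-t_0|$, and the one-sided $\xi_i$-growth hold uniformly for every $t_0\in D'$.  Since both endpoints of any gap lie in $\overline{D'}$, each can serve as the anchor, which resolves the same-side issue without any reparameterization.  The paper's route is a bit more direct for this reason.

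One point worth making explicit in your write-up: the claim that the two Lipschitz bounds ``reduce to their restrictions on $u(E_i)$'' across the piecewise-linear extension relies essentially on $\phi$ and $\xi_i$ being \emph{linear} on the ambient Banach space (arranged just before Lemma~\ref{l:T-compact}; cf.\ Remark~\ref{r:linear}).  Without linearity of $\phi$, composing with a straight segment in $\tilde{X}$ need not produce a linear---hence Lipschitz-constant-preserving---map on the gap, and the reduction would fail.
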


\begin{proof}
   Let $\gamma \in \tilde{T}_\delta$. By Lemma \ref{l:banach-partition} with any $\epsilon < \frac{1}{2}$, there is a Borel decomposition $\dom(\gamma) = N \cup E_1 \cup E_2 \cup ...$ so that $\mathcal{L}(N) = 0$ and for almost every $t \in E_i$, we have
   \begin{align*}
       (\xi_i \circ \gamma)'(t) &> \tfrac{1}{2} \Lip(\gamma,t), \\
       |(\phi \circ \gamma)'(t)| &< \delta \Lip(\gamma,t).
   \end{align*}
   It then suffices to prove that $\cH^1(\gamma|_{E_i} \cap S) = 0$ for all $i$.
   
The proof is now essentially that of \cite[Lemma 5.5]{bate-jams}, but we provide details for the reader. Suppose $\cH^1(\gamma|_{E_i}\cap S)>0$.
By standard measure theoretic techniques, there exist $\alpha,\delta', \in \R$ and $0 < a < b \leq 1$ so that $\delta' < \delta, \alpha > \frac{1}{2}$
and there exists a bounded $D \subset E_i$ of positive measure with
\begin{align*}
    \|(\phi \circ \gamma)'(t)\| < \delta'a, \ \ \ (\xi_i \circ \gamma)'(t) > \alpha b, \text{ and } a < \Lip(\gamma,t) < b
\end{align*}
Let $R > 0$ and $D' \subset D$ be a positive measure such that for every $t_0 \in D'$ and $t \in E_i$ with $|t - t_0| \leq R$,
\begin{align*}
    a|t-t_0| \leq \|\gamma(t) - \gamma(t_0)\| &\leq b |t-t_0|
    \\
    |\phi(\gamma(t)) - \phi(\gamma(t_0))| &\leq \delta' a |t -t_0|,
\end{align*}
and for every $s,t \in E_i$ with $t_0 - R < s < t_0 < t < t_0 + R$, we have
\begin{align*}
    \xi_i(\gamma(t_0)) - \xi_i(\gamma(s)) &\geq \alpha b (t_0 - s), \\
    \xi_i(\gamma(t)) - \xi_i(\gamma(t_0)) &\geq \alpha b (t - t_0).
\end{align*}
We let $D_0$ be the intersection of $D'$ with an interval of length $R$ so that $D_0$ has positive measure and $I$ is the smallest interval containing $D_0$. We first extend $\gamma$ to $\overline{D_0}$ and then piecewise {\it linearly} extend to a curve $\tilde{\gamma}:I\to \tilde{X}$. Note that $\tilde{\gamma}$ has the same  Lipschitz constant of $\gamma$.

For any connected component $(c,d)$ of $I \setminus \overline{D_0}$, we can find sequences $c_m,d_m \in D_0$ so that $c_m \to c$, $d_m \to d$. Then
$$\xi_i(\tilde{\gamma}(c)) - \xi_i(\tilde{\gamma}(d)) = \lim_{m \to \infty} \xi_i(\tilde{\gamma}(c_m)) - \xi_i(\tilde{\gamma}(d_m)) \geq \alpha b(d-c).$$
Since our extension of $\gamma$ to $\tilde{\gamma}$ was linear and $\xi_i$ is linear, we get from the Lipschitz bound on $\tilde{\gamma}$ that for any $t < t' \in I$,
$$\xi_i(\tilde{\gamma}(t')) - \xi_i(\tilde{\gamma}(t)) \geq \alpha b (t'-t) \geq \alpha \|\tilde{\gamma}(t) - \tilde{\gamma}(t')\|.$$
Note that this means $\tilde{\gamma}$ is biLipschitz.
As $\phi$ is linear, we can similarly show for any $t, t' \in I$, that
$$|\phi(\tilde{\gamma}(t)) - \phi(\tilde{\gamma}(t'))| \leq \delta' \|\gamma(t) - \gamma(t')\|.$$
Thus $\tilde{\gamma}\in \tilde{T}_{\delta',i}\subset \tilde{T}_{\delta,i}$.
As $0=\cH^1(\tilde{\gamma} \cap S) \geq \cH^1(\tilde{\gamma}(D_0)) > 0$, we get a contradiction.
\end{proof}

\begin{remark} \label{r:linear}
The proof and statement of Lemma 5.5 in \cite{bate-jams} works only if $\phi$ and $\psi$ are linear. This is not a problem there as one may precompose with another embedding so that $\phi$ and $\psi$ are linear, as was done in Definition 5.6 of \cite{bate-jams} (and as we did with $\R^n \oplus_\infty \ell_\infty$).  The authors thank David Bate for pointing this out.
\end{remark}

\begin{lemma} \label{l:invert-LI}
  For any closed linearly independent cones $C_1,...,C_n \subset \R^n$, there is a $M > 0$ so that if $\delta > 0$ and $A$ is an $n\times n$ matrix with columns $a_i \in C_i \cap B(0,\delta)^c$, then $\|A^{-1}\|_{op} \leq M\delta^{-1}$.
\end{lemma}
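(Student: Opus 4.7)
The plan is a rescaling followed by a compactness argument. First I would reduce to the case $\delta = 1$: since each $C_i$ is closed under positive scaling, the matrix $A' = \delta^{-1}A$ has columns $a_i/\delta \in C_i \cap B(0,1)^c$, and $\|A^{-1}\|_{op} = \delta^{-1}\|(A')^{-1}\|_{op}$, so it suffices to bound $\|(A')^{-1}\|_{op}$ by a constant depending only on the cones. Next I would eliminate the dependence on the column lengths $\|a_i/\delta\| \geq 1$ by factoring $A' = \tilde A D$, where $D = \mathrm{diag}(\|a_1/\delta\|,\ldots,\|a_n/\delta\|)$ and the normalized column $\tilde a_i$ lies in $C_i \cap S^{n-1}$. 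Since $\|D^{-1}\|_{op} \leq 1$, we get $\|(A')^{-1}\|_{op} \leq \|\tilde A^{-1}\|_{op}$.

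This reduces the problem to producing a uniform bound $\|\tilde A^{-1}\|_{op} \leq M$ over all tuples $(\tilde a_1,\ldots,\tilde a_n) \in K := \prod_{i=1}^n (C_i \cap S^{n-1})$, which I would do by compactness. The set $K$ is compact, being a product of closed subsets of $S^{n-1}$, and the linear-independence hypothesis on the cones is phrased exactly so that every tuple in $K$ yields an invertible matrix $\tilde A$. Since matrix inversion is continuous on the open set of invertible matrices and the operator norm is continuous, the map $(\tilde a_1,\ldots,\tilde a_n) \mapsto \|\tilde A^{-1}\|_{op}$ is a continuous, real-valued function on the compact set $K$, and so attains a finite maximum $M$.

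The only (very mild) obstacle is a degeneracy to dispose of: if some $C_i = \{0\}$, then the hypothesis $a_i \in C_i \cap B(0,\delta)^c$ is vacuous and the lemma holds trivially; otherwise each $C_i \cap S^{n-1}$ is nonempty and closed, so $K$ is nonempty and compact, and combining the two reductions with the compactness bound gives $\|A^{-1}\|_{op} \leq M\delta^{-1}$.
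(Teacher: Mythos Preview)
Your proposal is correct and follows essentially the same approach as the paper: both arguments normalize the columns to lie on $C_i \cap S^{n-1}$, use compactness of the product $\prod_i (C_i \cap S^{n-1})$ together with continuity of $A \mapsto \|A^{-1}\|_{op}$ to obtain the constant $M$, and then recover the general case via the factorization $A = \tilde A D$ with $\|D^{-1}\|_{op} \leq \delta^{-1}$. Your preliminary rescaling to $\delta = 1$ is a harmless extra step (the paper skips it and absorbs the $\delta$ directly into the diagonal bound), and your explicit handling of the degenerate case $C_i = \{0\}$ is a small addition the paper omits.
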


\begin{proof}
  We first prove the lemma for the special case when $|a_i| = 1$ for all $i$ (so that $\delta = 1$). Then $a_i \in C_i \cap S^{n-1} =: \hat{C}_i$. The function $(a_1,...,a_n) \mapsto \|A^{-1}\|_{op}$ is continuous when defined on $\R^{n \times n} \setminus S$ where $S$ is the closed variety of vectors that give singular matrices. As $\hat{C}_1 \times ... \times \hat{C}_n \subset \R^{n \times n} \setminus S$ is compact, we have that there is a $M > 0$ so that
  $$\max_{(a_1,...,a_n) \in \hat{C}_1 \times ... \times \hat{C}_n} \|A^{-1}\|_{op} \leq M,$$
  as desired.

  Now let $a_i \in B(0,\delta)^c$ with associated matrix $A$ and let $u_i = \frac{a_i}{|a_i|}$ have associated matrix $U$. Let $M > 0$ be the quantity of the lemma for $U$ as established before. Then $A = UD$ where $D$ is the diagonal matrix with entries $|a_i| \geq \delta$. Then $A^{-1} = D^{-1}U^{-1}$, $\|D^{-1}\|_{op} \leq \delta^{-1}$, and so
  \begin{align*}
    \|A^{-1}\|_{op} = \|D^{-1}U^{-1}\|_{op} \leq M\delta^{-1}.
  \end{align*}
\end{proof}

\begin{proof}[Proof of Proposition \ref{p:extend-alberti}]
  Let $C_1,...,C_n \subset \R^n$ be the independent cones for which $\mu$ has Alberti representations that are $\phi$-independent and let $e_j$ be the axis of each $C_j$.
  Applying Corollary 5.9 of \cite{bate-jams} to each $C_j$ (slightly inflated) with $\psi(x) = \langle e_j, \phi(x) \rangle$, we get that there exists a Borel decomposition $X = \bigcup_i A_i$ where each $\mu|_{A_i}$ has Alberti representations in each $\phi$-direction $C_j$ with $\phi$-speed at least some $\delta_i > 0$. It suffices to prove the conclusion of the proposition for each $A_i$. We fix one such $A_i$ and let $A = A_i$, $\delta = \delta_i$.

  Let $\kappa > 0$ be sufficiently small depending on $n$, $\delta$, and $C_1,...,C_n$ to be determined. Let $B \subset A$ be a compact subset so that there is some $k \in \N$ for which
  \begin{align}
    2^{-k-1} \leq \Lip(\phi,x) < 2^{-k}, \qquad \forall x \in B, \label{eq:Lip-bounds}
  \end{align}
  and $\tilde{B}$ be its compact convex hull.  As $\tilde{T}_{\kappa 2^{-k},i}$ are all closed, by repeated application of Lemma 5.2 of \cite{bate-jams}, we get a Borel decomposition
  \begin{align*}
    B = S \cup E_1 \cup E_2 \cup ...
  \end{align*}
  where $\mu|_{E_i}$ supports an Alberti representation on $\tilde{T}_{\kappa 2^{-k},i}$ and $\cH^1(S \cap \gamma) = 0$ for all $\gamma \in \tilde{T}_{\kappa2^{-k},i}$ and $i \in \N$. Lemma \ref{l:invisible} then says that $\cH^1(S \cap \gamma) = 0$ for all $\gamma \in \tilde{T}_{\kappa 2^{-k}}$. As $S \subseteq B$, we get by \eqref{eq:Lip-bounds} that $\cH^1(S \cap \gamma) = 0$ for all $\gamma \in T_\kappa(\phi)$.

  Let $\phi_i = (\phi,\xi_i)$ be $\R^{n+1}$-valued Lipschitz maps. We now show that $\mu|_{E_i}$ supports $n+1$ $\phi_i$-independent Alberti representations. Let $\cA_1,...,\cA_n$ be the Alberti representations of $\mu|_{E_i}$ that are in the $\phi$-direction $C_1,...,C_n$ with $\phi$-speed $\delta$. For each $1 \leq j \leq n$, let $\hat{C}_j = (C_j \setminus B(0,\delta 2^{-k})) \times [-1,1] \subset \R^{n+1}$. It is clear that $\hat{C}_1,...,\hat{C}_n$ are linearly independent and $(\phi_i \circ \gamma)'(t) \in \hat{C}_j$ for almost every $\gamma \in \cA_j$.

  As $\mu|_{E_i}$ supports an Alberti representation $\cA$ on $\tilde{T}_{\kappa 2^{-k},i}$, we get that $(\phi_i \circ \gamma)'(t) \in B(0,\kappa 2^{-k}) \times ([-1,-1/2] \cup [1/2,1]) =: \hat{C}_{n+1}$ for almost every $\gamma \in \cA$. We show that $\hat{C}_1,...,\hat{C}_{n+1}$ are linearly independent when $\kappa$ is sufficiently small. Let $v_1,...,v_{n+1}$ be vectors from each set and suppose for contradiction that $c_1,...,c_{n+1}$ are nonzero scalars for which
  \begin{align*}
    c_1v_1 + ... + c_{n+1} v_{n+1} = 0.
  \end{align*}
  Write $v_j = (a_j,b_j)$ where $a_j \in C_j \setminus B(0,\delta 2^{-k})$ and $b_j \in [-1,1]$ when $1 \leq j \leq n$ and $a_{n+1} \in B(0,\kappa 2^{-k})$ and $|b_{n+1}| \geq \tfrac{1}{2}$. As $a_1,...,a_n$ are linearly independent, it must be that $c_{n+1} \neq 0$. Thus, we may actually assume
  \begin{align*}
    c_1v_1 + ... c_nv_n  = v_{n+1}.
  \end{align*}
  Letting $A$ be the matrix with columns $a_i$, we see that $(c_1,...,c_n) = A^{-1}a_{n+1}$. We then get that
  \begin{align*}
    \max_{1 \leq j \leq n} |c_j| \leq \|A^{-1}\|_{op} |a_{n+1}| \leq M \delta^{-1} 2^k \cdot \kappa 2^{-k}
  \end{align*}
  where $M$ is the constant from Lemma \ref{l:invert-LI} that depends only on $C_1,...,C_n$. Now taking $\kappa < \frac{\delta}{4Mn}$, we get that $|c_j| \leq \frac{1}{4n}$ for all $j$. But now we have that
  \begin{align*}
    |c_1b_1 + ... + c_nb_n| \leq \frac{|b_1| + ... + |b_n|}{4n} \leq \frac{1}{4}.
  \end{align*}
  As $|b_{n+1}| \geq \frac{1}{2}$, it cannot be that $v_1,...,v_{n+1}$ is ever linearly independent.

  The sets $\hat{C}_1,...,\hat{C}_{n+1}$ are closed and linearly independent, but are not cones. However, we can cover each $\hat{C}_j$ with a family of cones $\cC_j$ so that if $K_j \in \cC_j$, then $K_1,...,K_{n+1}$ are linearly independent cones. We then apply Corollary 5.9 of \cite{bate-jams} again with $\phi = \phi_i$ and $\psi = \xi_i$ and take appropriate intersections to get that each $E_i$ admits a countable Borel decomposition on which each piece has $n+1$ $\phi_i$ linearly independent Alberti representations. This shows that $B$ satisfies the conclusion of the proposition.
  
  Note that as $\mu|_A$ supports an Alberti representation with $\phi$-speed $\delta$, it must be that $\Lip(\phi,x) > 0$ for almost every $x \in A$. Thus, by varying $k \in \N$, there exists a countable cover of $A$ by such $B$. As each $B$ is of the desired form, we get the proposition.
\end{proof}

\begin{corollary} \label{c:Alberti-cor}
    Let $(X,d,\mu)$ be a compact metric measure space.  If every $\tilde{D}(n)$ set is $\mu$-null, then $\mu$ has $n+1$ independent Alberti representations.
\end{corollary}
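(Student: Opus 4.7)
The plan is to prove the corollary by induction on $n \geq 0$, using Proposition~\ref{p:extend-alberti} as the engine of the inductive step. A preliminary observation is that any Borel decomposition witnessing $S \in \tilde{D}(n-1)$ also witnesses $S \in \tilde{D}(n)$, since the constraint $n_i \leq n-1$ implies $n_i \leq n$; thus the hypothesis ``every $\tilde{D}(n)$-set is $\mu$-null'' automatically descends to every lower level.

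For the inductive step, assume the conclusion at level $n-1$. By the observation, $\mu$ has $n$ independent Alberti representations: a Borel decomposition $X = \bigcup_i B_i$ with Lipschitz $\phi_i : B_i \to \R^n$ so that each $\mu|_{B_i}$ carries $n$ $\phi_i$-independent Alberti representations. Passing to an inner-regular exhaustion of each $B_i$ by compact sets $K_{i,j}$ and restricting the curves of the representations to $\gamma^{-1}(K_{i,j})$, the map $\phi_i|_{K_{i,j}}$ inherits $n$ independent Alberti representations on the compact space $(K_{i,j}, d, \mu|_{K_{i,j}})$. Applying Proposition~\ref{p:extend-alberti} to each such triple produces a decomposition
$$K_{i,j} = S_{i,j} \cup \bigcup_k A_{i,j,k}$$
where $S_{i,j} \in \tilde{D}(\phi_i|_{K_{i,j}}) \subseteq \tilde{D}(n)$ (hence $\mu$-null by hypothesis), and each $A_{i,j,k}$ is equipped with a Lipschitz $\phi_{i,j,k} : X \to \R^{n+1}$ providing $n+1$ $\phi_{i,j,k}$-independent Alberti representations on $A_{i,j,k}$. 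Since the $A_{i,j,k}$ cover $X$ up to a $\mu$-null set, $\mu$ admits $n+1$ independent Alberti representations.

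The main obstacle is the base case $n = 0$. The natural candidate is the constant map $\phi : X \to \R^0$, but Proposition~\ref{p:extend-alberti} cannot be invoked verbatim since its proof selects a scale $k$ with $2^{-k-1} \leq \Lip(\phi,x)$, which is impossible here, and its linear-independence argument manipulates pre-existing cones $C_1,\dots,C_n$ that do not exist. The remedy is to rerun the proof of the proposition with this degenerate $\phi$, which actually simplifies: $T_\kappa(\phi) = \Gamma(X)$, so $\tilde{D}(0)$ consists precisely of the Borel sets that are $\cH^1$-null against every fragment, and each $\tilde{T}_{\kappa,i}$ collapses to the family of fragments satisfying $(\xi_i \circ \gamma)'(t) \geq \tfrac{1}{2}\Lip(\gamma,t)$ a.e. Applying Lemma~5.2 of \cite{bate-jams} in the same way as in the proposition's proof, together with Lemma~\ref{l:invisible}, yields for any compact $B \subseteq X$ a decomposition $B = S \cup E_1 \cup E_2 \cup \cdots$ where $S \in \tilde{D}(\phi) \subseteq \tilde{D}(0)$ and each $\mu|_{E_i}$ supports an Alberti representation in the nontrivial cone $[0,\infty) \subset \R$ via the coordinate $\xi_i$. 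The hypothesis forces $\mu(S) = 0$, and covering $X$ by such compact $B$'s produces one independent Alberti representation of $\mu$, completing the base case.
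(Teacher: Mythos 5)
Your proof is correct, but it is organized differently from the paper's. The paper argues by contradiction with a ``minimal counterexample'': assuming $\mu$ fails to have $n+1$ independent representations, it extracts (by the usual exhaustion argument) a positive-measure set $S$ and a maximal $h \leq n$ with a Lipschitz $\phi : S \to \R^h$ admitting $h$ $\phi$-independent representations but no positive-measure subset admitting $h+1$; a single application of Proposition~\ref{p:extend-alberti} then forces a positive-measure piece of $S$ into $\tilde{D}(\phi) \subset \tilde{D}(n)$, contradicting the hypothesis. You instead run an induction on $n$, applying the proposition once per level to upgrade $n$ representations to $n+1$ off a null $\tilde{D}(n)$ set. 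The two arguments rest on exactly the same engine, so the mathematical content is the same; the trade-offs are organizational. The paper's version is three sentences long and needs the proposition only once, but it silently relies on the case $h=0$ (a positive-measure set with no Alberti representations at all, handled via the trivial map $\phi : S \to \R^0$), and as you correctly observe, the proposition's proof as written does not literally run there: the scale selection $2^{-k-1} \leq \Lip(\phi,x)$ and the cone manipulations degenerate. Your base case makes this explicit and supplies the (correct) fix --- with trivial $\phi$ the sets $\tilde T_{\delta,i}$ carry only the $\xi_i$-condition, Lemma 5.2 of \cite{bate-jams} and Lemma~\ref{l:invisible} still apply, and one representation is produced directly --- which is a genuine improvement in care over the paper. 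The price of the induction is the extra bookkeeping of restricting Alberti representations to a compact exhaustion at each level; this is standard and unobjectionable (and the paper's own single application of the proposition also implicitly requires passing to a compact subset, since the proposition is stated for compact spaces). The only caveat, shared equally by both arguments, is the mild ambiguity in whether $\tilde D(\phi|_{K})$ computed with fragments into $K$ agrees with membership in $\tilde D(n)$ computed with fragments into $X$; your use matches the paper's.
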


\begin{proof}
    Suppose there is a positive measure subset $S$ that does not admit $n+1$ independent Alberti representations. After possibly passing to a positive measure subset, there then exists a Lipschitz $\phi : S \to \R^h$ for $h \leq n$ admitting $h$ $\phi$-independent Alberti representations, but for which there are no positive measure subsets $S' \subset S$ with $h+1$ independent Alberti representations. Proposition \ref{p:extend-alberti} then gives that $S \in \tilde{D}(\phi) \subset \tilde{D}(n)$.
\end{proof}

\begin{remark}
    Whether or not the converse to Corollary \ref{c:Alberti-cor} holds is an interesting question that we do not answer at this point.
\end{remark}

We can now use Proposition \ref{p:extend-alberti} to get the following characterization of rectifiability via the $\tilde{D}(n)$ sets. It is easy to see that this is just a restatement of Theorem \ref{th:intro-null-char}.

\begin{theorem} \label{th:null-char}
Let $(X,d)$ be a compact metric space so that $0 < \cH^p(X) < \infty$. Then $X$ is $p$-rectifiable if and only if any $\tilde{D}(n)$ set is $\cH^p$-null where $n$ is the largest integer smaller than $p$.
\end{theorem}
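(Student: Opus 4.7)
The plan is to prove both directions of the equivalence separately. The forward direction reduces, via the biLipschitz decomposition of Lemma \ref{l:kirchheim}, to a Fubini/Rademacher argument in $\R^p$. The backward direction invokes Corollary \ref{c:Alberti-cor} combined with known rectifiability criteria from metric geometry.

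\emph{Forward direction ($X$ rectifiable implies every $\tilde{D}(n)$ set is $\cH^p$-null).} Let $S \in \tilde{D}(n)$. Reducing to one piece of the definition, assume $S \in \tilde{D}(\phi,\kappa)$ for some Lipschitz $\phi : S \to \R^m$ with $m \leq n < p$ and $\kappa > 0$. Since $S$ is $p$-rectifiable, Lemma \ref{l:kirchheim} supplies (up to an $\cH^p$-null set) a countable family of $L$-biLipschitz maps $g : A \to S$ with $A \subset \R^p$ Borel; fix one such $g$. Extend $\phi \circ g$ coordinatewise via McShane--Whitney (after embedding the target isometrically in $\ell_\infty$) to a Lipschitz $\tilde{\Phi} : \R^p \to \R^m$. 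Comparing pointwise Lipschitz constants at density points of $A$, and checking that any $\tilde{\Phi}$-transverse fragment $\alpha : K \to \R^p$ pushes forward via $g$ (after restricting to compact subsets of $\alpha^{-1}(A)$) to a $\phi$-transverse fragment in $X$, one obtains $A \in \tilde{D}(\tilde{\Phi},\kappa')$ for some $\kappa' > 0$ depending only on $\kappa$ and $L$.

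The forward direction thus reduces to the Euclidean claim: for Lipschitz $\Psi : \R^p \to \R^m$ with $m < p$ and $\kappa' > 0$, any Borel $A \in \tilde{D}(\Psi,\kappa')$ has $\cH^p(A) = 0$. Suppose not. By Rademacher, $\Psi$ is differentiable a.e.\ with $\Lip(\Psi,x) = \|D\Psi(x)\|_{\mathrm{op}}$ at such points, and $\dim \ker D\Psi(x) \geq 1$. Measurable selection and a finite partition yield a positive-measure $A' \subset A$ together with a fixed direction (rotate to make it $e_1$) and a unit vector field $v(x) \in \ker D\Psi(x)$ satisfying $\|v(x) - e_1\| < \kappa'/2$ on $A'$, so $|D\Psi(x) \cdot e_1| \leq (\kappa'/2) \Lip(\Psi,x)$ a.e.\ on $A'$. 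Fubini produces $y_0 \in \R^{p-1}$ with $K = \{t : (t,y_0) \in A'\}$ of positive Lebesgue measure; the isometric line fragment $\gamma(t) = (t,y_0)$ restricted to a compact subset of $K$ lies in $A'$, is in $T_{\kappa'}(\Psi)$, and has $\cH^1(A \cap \gamma) > 0$, contradicting $A \in \tilde{D}(\Psi,\kappa')$.

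\emph{Backward direction (every $\tilde{D}(n)$ set is $\cH^p$-null implies $X$ is $p$-rectifiable).} Corollary \ref{c:Alberti-cor} gives that $\cH^p|_X$ has $n+1$ independent Alberti representations. Since $n$ is the largest integer smaller than $p$, $p \leq n+1$. If $p < n+1$ (so $p \notin \N$), then $n+1$ independent Alberti representations force $\dim_H X \geq n+1 > p$ (via absolute continuity of pushforward under the accompanying Lipschitz $\phi : X \to \R^{n+1}$), contradicting $\cH^p(X) < \infty$; the hypothesis is thus vacuous in this case. If $p = n+1 \in \N$, then $\cH^p|_X$ has $p$ independent Alberti representations on a space of finite $\cH^p$-measure, and Bate's rectifiability criterion (see \cite{bate-jams,bate2020purely}) yields that $X$ is $p$-rectifiable.

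\emph{Main obstacle.} The harder step is the final appeal in the backward direction: extracting $p$-rectifiability from $p$ independent Alberti representations on a space of finite $\cH^p$-measure invokes a nontrivial rectifiability criterion from the metric geometry literature. The forward direction is more elementary but requires careful bookkeeping of pointwise Lipschitz constants when transferring the $\tilde{D}$ condition across the biLipschitz parameterization $g$.
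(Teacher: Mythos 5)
Your proof is correct and its overall architecture matches the paper's: the backward direction runs through Corollary \ref{c:Alberti-cor} plus the same two external inputs (for $p \notin \N$, that $n+1$ independent Alberti representations are incompatible with $0 < \cH^p(X) < \infty$, which the paper takes from Theorem 2.17 of \cite{bate2020purely}; for $p \in \N$, the rectifiability criterion from $p$ independent Alberti representations, which the paper cites as Theorem 1.1 of \cite{bate-weigt} rather than \cite{bate-jams}), and the forward direction pulls everything back to $\R^p$ through a biLipschitz chart and produces a transversal line fragment. Where you genuinely diverge is in the differentiation machinery for the forward direction: the paper replaces $\phi \circ f$ by a Federer $C^1$ approximation $g$ on a positive-measure set and then splits into the cases $Dg \neq 0$ and $Dg \equiv 0$, using continuity of $Dg$ near a density point to fix a near-kernel direction; you instead use McShane--Whitney plus Rademacher, a measurable selection of a unit kernel field $v(x) \in \ker D\Psi(x)$ with a finite partition of directions, and Fubini to find the line. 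Your route avoids the case split entirely (the $D\Psi = 0$ case is absorbed, since then $|D\Psi(x)e_1| \leq (\kappa'/2)\Lip(\Psi,x)$ reads $0 \leq 0$) and avoids the $C^1$ approximation, at the cost of one extra Fubini application to ensure the chosen line avoids the non-differentiability set of $\Psi$ in $\cH^1$-measure. One point to tighten: your intermediate assertion that $A \in \tilde{D}(\tilde{\Phi},\kappa')$ for \emph{all} transversal fragments needs $A$ first replaced by its full-measure subset of points that are simultaneously Lebesgue density points of $A$ and differentiability points of $\tilde{\Phi}$ --- a general fragment can carry positive $\cH^1$-measure inside a Lebesgue-null set, and the comparison $\Lip(\tilde{\Phi},x) \leq L\,\Lip(\phi,g(x))$ is only available at such good points. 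Since your contradiction only ever uses the Fubini-produced line fragments, this is bookkeeping rather than a gap, but it should be made explicit.
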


\begin{proof}
First assume $X$ is not rectifiable. If $p \notin \N$, then Theorem 2.17 of \cite{bate2020purely} says that the $\cH^p$ measure on subsets of $X$ have at most $n$ independent Alberti representations. On the other hand, if $p \in \N$, as we are trying to find a positive measure $\tilde{D}(n)$ subset, we can take (after possibly passing to a subset) $X$ to be purely $p$-unrectifiable. Theorem 1.1 of \cite{bate-weigt} says that if any subset of $X$ has $p$ independent Alberti representations, then that subset is $p$-rectifiable. Thus, it must be that subsets of $X$ have at most $n=p-1$ independent Alberti representations. Either way, subsets of $X$ have at most $n$ independent Alberti representations. Corollary \ref{c:Alberti-cor} then gives that there is some $\tilde{D}(n)$ set of positive measure.

Suppose now that $X$ is rectifiable so that $p$ is an integer. Assume for contradiction that $S\in \tilde{D}(n)$ has $\cH^p(S)>0$.  
After possibly passing to a positive measure subset, we may assume there is a $\kappa > 0$ and a Lipschitz $\phi:S\to \bR^m$ where $m \leq n$ so that $\cH^1(S\cap \gamma)=0$ for all $\gamma\in T_\kappa(\phi)$.
However since $\cH^p(S)>0$ and $S$ is $p$-rectifiable, we then have a biLipschitz map $f:B\to S$ with $B\subset \bR^p$ is Borel of positive Lebesgue measure.
As $\phi\circ f:B\to \bR^m$ is Lipschitz, there is a $C^1$ map $g:\bR^p\to \bR^m$ and $B'\subset B$ so that $g|_{B'}=\phi\circ f|_{B'}$ and $B'$ has positive Lebesgue measure (see for instance \cite[Theorem 3.1.5]{Federer}).

First assume that there is a point of density $x \in B'$ for which $Dg(x) \neq 0$.
Without loss of generality, we may assume $x = \zero$. Note that
$$\dim(\ker(Dg(\zero))) \geq p - m > 0.$$
For $\theta > 0$, let
$$K_\theta = \{v \in S^{p-1} : d(v,\ker(Dg(\zero))) \leq \theta\}$$
be the $\theta$-neighborhood of $S^{p-1} \cap \ker(Dg(\zero))$. As $\zero$ is a density point of $B'$,  for every $r,\theta > 0$, there exists a $v_{r,\theta} \in K_\theta$ so that, setting $L_{r,\theta}(t) = tv_{r,\theta}$, we have $\cH^1(B' \cap L_{r,\theta}|_{[0,r]}) > 0$. By taking $r,\theta$ sufficiently small depending on the $\|Dg(\zero)\|_{op} > 0$ and the continuity of $Dg(\cdot)$ at $\zero$, we get by an elementary argument that
\begin{align*}|(g \circ L_{r,\theta})'(t)| = |Dg(L_{r,\theta}(t))v_{r,\theta}|&\leq \frac{\kappa}{\|f\|_{lip}\|f^{-1}\|_{lip}}\|Dg(L_{r,\theta}(t))\|_{op} \\
&= \frac{\kappa}{\|f\|_{lip}\|f^{-1}\|_{lip}} \Lip(g,L_{r,\theta}(t)) \Lip(L_{r,\theta},t), \qquad \forall t \in  [0,r].
\end{align*}
As $g|_{B'} = \phi \circ f|_{B'}$, we then have for $\gamma = f \circ L_{r,\theta}$ that
\begin{align*}|(\phi \circ \gamma)'(t)| &= |(g \circ L_{r,\theta})'(t)| \\
&\leq \frac{\kappa}{\|f\|_{lip}\|f^{-1}\|_{lip}} \Lip(g,L_{r,\theta}(t)) \Lip(L_{r,\theta},t) \\
&\leq \frac{\kappa}{\|f\|_{lip}\|f^{-1}\|_{lip}} \Lip(\phi,\gamma(t)) \|f\|_{lip} \Lip(\gamma,t) \|f^{-1}\|_{lip} \\
&= \kappa\Lip(\phi,\gamma(t)) \Lip(\gamma,t), \qquad \forall t \in L_{r,\theta}^{-1}(B') \cap [0,r],
\end{align*}
where the last inequality comes from the chain-rule inequality of Lip.

As $f$ is biLipschitz, so then is $\gamma$. Finally, letting $K$ be a positive measure compact subset of $L_{r,\theta}^{-1}(B') \cap [0,r] \subseteq  \gamma^{-1}(S) \cap [0,r]$, we get that $\gamma|_K \in T_\kappa(\phi)$ and $\cH^1(S \cap \gamma|_K) > 0$, a contradiction.

Now assume every point of density $x \in B'$ has $Dg(x) = 0$. We may assume then that $Dg(x) = 0$ for all $x \in B'$. Assume again that $\zero$ is a density point of $B'$. Then there is some $v \in S^{p-1}$ so that setting $L(t) = tv$ and $\gamma = f \circ L$, we get $\cH^1(B' \cap L|_{[0,1]}) > 0$. As $Dg = 0$ on $B'$, we get
$$|(\phi \circ \gamma)'(t)| = |(g \circ L)'(t)| = |Dg(L(t))v| = 0 \qquad \forall t \in L^{-1}(B').$$
Passing again to a positive measure compact subset $K \subseteq L^{-1}(B') \cap [0,1] \subseteq \gamma^{-1}(S) \cap [0,1]$, we get that $\gamma|_K \in T_\kappa(\phi)$ and $\cH^1(S \cap \gamma|_K) > 0$, another contradiction.
\end{proof}

\section{Lower bounding itineraries}

The goal of this section is Lemma \ref{l:small-lower-bound}, which will be key to ensuring that the cost of an itinerary does not decay significantly if it uses sufficiently small shortcuts. However, this section does not use unrectifiability, cost, or shortcuts as the shortcuts and costs have not been constructed.

\begin{lemma} \label{l:lots-shortcuts}
  Let $0 < \beta < 1$. Let $x,y \in X$, $\cB$ be a collection of disjoint balls with $\sup_{B \in \cB} \diam(B) \leq \rho(x,y)$, and for each $B \in \cB$, choose some $x_B,y_B \in \beta B$. Let $(x_1,...,x_N) \in \cI(x,y)$ and $S = \{i_1 < i_2 < ... < i_m\} \subseteq \{1,...,N-1\}$ be the indices where $\{x_i,x_{i+1}\} = \{x_B,y_B\}$ for some $B \in \cB$. Suppose further that
  \begin{align}
    (x_{i_j},x_{i_j+1}) \text{ and } (x_{i_{j+1}},x_{i_{j+1}+1}) \text{ are associated with different balls of }\mathcal{B}\text{ for all } 1 \leq j \leq m-1. \quad \label{eq:diff-shortcuts}
  \end{align}

  If $\sum_{i \in S} \rho(x_i,x_{i+1}) > \beta \rho(x,y)$, then
  $$\sum_{i \notin S} \rho(x_i,x_{i+1}) \geq \frac{1-\beta}{2\beta} \sum_{i \in S} \rho(x_i,x_{i+1}).$$
\end{lemma}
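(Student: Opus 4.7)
The plan is to exploit the disjointness of consecutive balls in the shortcut sequence to produce linear lower bounds on the non-shortcut segments between them, then sum these up. Set $a_j := \rho(x_{i_j}, x_{i_j+1})$ for the length of the $j$-th shortcut segment and let $B_j \in \mathcal{B}$ be the (by hypothesis unique, for consecutive $j$) ball so that $\{x_{i_j}, x_{i_j+1}\} = \{x_{B_j}, y_{B_j}\}$. Write $T = \sum_{j=1}^m a_j = \sum_{i \in S} \rho(x_i, x_{i+1})$ for the total shortcut cost.

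First I would show $m \geq 2$. Since both endpoints of the $j$-th shortcut lie in $\beta B_j$, we have $a_j \leq 2\beta\, \rad(B_j) \leq \beta\, \diam(B_j) \leq \beta \rho(x,y)$ using the diameter hypothesis on $\mathcal{B}$. If $m = 1$, then $T = a_1 \leq \beta \rho(x,y)$, contradicting the hypothesis $T > \beta \rho(x,y)$.

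The main pointwise bound is the following: for each $1 \leq j \leq m-1$, the hypothesis \eqref{eq:diff-shortcuts} guarantees $B_j \neq B_{j+1}$, hence $B_j$ and $B_{j+1}$ are disjoint. Their centers therefore satisfy $\rho(\cen(B_j), \cen(B_{j+1})) \geq \rad(B_j) + \rad(B_{j+1})$, and since $x_{i_j+1} \in \beta B_j$ and $x_{i_{j+1}} \in \beta B_{j+1}$, a reverse triangle inequality gives
$$\rho(x_{i_j+1}, x_{i_{j+1}}) \;\geq\; (1-\beta)\bigl(\rad(B_j) + \rad(B_{j+1})\bigr) \;\geq\; \frac{1-\beta}{2\beta}(a_j + a_{j+1}),$$
where the last step uses $a_k \leq 2\beta\,\rad(B_k)$. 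Now the indices strictly between $i_j$ and $i_{j+1}$ all lie in $S^c$, so by the triangle inequality $\sum_{i_j < i < i_{j+1}} \rho(x_i, x_{i+1}) \geq \rho(x_{i_j+1}, x_{i_{j+1}})$.

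Finally I would sum these inequalities over $j = 1, \ldots, m-1$:
$$\sum_{i \notin S} \rho(x_i, x_{i+1}) \;\geq\; \sum_{j=1}^{m-1} \rho(x_{i_j+1}, x_{i_{j+1}}) \;\geq\; \frac{1-\beta}{2\beta} \sum_{j=1}^{m-1}(a_j + a_{j+1}) \;\geq\; \frac{1-\beta}{2\beta}\, T,$$
where the last inequality holds because $\sum_{j=1}^{m-1}(a_j + a_{j+1}) = 2T - a_1 - a_m \geq T$ (each $a_k$ appears in the telescoping double sum, which is meaningful since $m \geq 2$). This is exactly the claimed bound. I don't anticipate a genuine obstacle here; the only subtle point is noticing that \eqref{eq:diff-shortcuts} is precisely what powers the disjoint-balls lower bound on every intervening non-shortcut segment, and that the lower order terms $a_1, a_m$ are automatically absorbed once $m \geq 2$.
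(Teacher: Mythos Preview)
Your proposal is correct and follows essentially the same route as the paper's proof: bound each shortcut length by $\beta$ times the ball's diameter to force $m\ge 2$, use disjointness of $B_j$ and $B_{j+1}$ together with a reverse triangle inequality to lower-bound the non-shortcut stretch between consecutive shortcuts by $\tfrac{1-\beta}{2\beta}(a_j+a_{j+1})$, and sum. The only cosmetic caveat is your chain $2\beta\,\rad(B_j)\le \beta\,\diam(B_j)$, which is an equality under the paper's implicit convention $\diam(B)=2\,\rad(B)$; under that convention (which the paper also uses without comment) every step is fine.
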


\begin{proof}
  As $\sup_{B \in \cB} \rho(x_B,y_B) \leq \beta \rho(x,y)$, we get that $\# S \geq \left\lceil \frac{\sum_{i \in S} \rho(x_i,x_{i+1})}{\beta \rho(x,y)} \right\rceil \geq 2$. Let $1 \leq j \leq m-1$ and $B,B' \in \cB$ be the distinct balls associated to $(x_{i_j},x_{i_j+1})$ and $(x_{i_{j+1}},x_{i_{j+1}+1})$, respectively. Note that $j$ exists as $m = \#S \geq 2$. Then
  \begin{multline*}
    \sum_{i = i_j+1}^{i_{j+1}-1} \rho(x_i,x_{i+1}) \geq \rho(x_{i_j+1},x_{i_{j+1}}) \geq \rho(\beta B, \beta B') \geq (1-\beta) \frac{\diam(B) + \diam(B')}{2} \\
    \geq \frac{1-\beta}{2\beta}(\rho(x_{i_j},x_{i_j+1}) + \rho(x_{i_{j+1}},x_{i_{j+1}+1}))
  \end{multline*}
  As $\{i_j+1,...,i_{j+1}-1\} \subseteq S^c$, we then get that
  \begin{align*}
    \sum_{i \notin S} \rho(x_i,x_{i+1}) \geq \frac{1-\beta}{2\beta} \sum_{j=1}^{m-1} (\rho(x_{i_j},x_{i_j+1}) + \rho(x_{i_{j+1}},x_{i_{j+1}+1})) \geq \frac{1-\beta}{2\beta} \sum_{i \in S} \rho(x_{i},x_{i+1}).
  \end{align*}
\end{proof}

\begin{lemma} \label{l:small-lower-bound}
  Let $X$ be a compact metric space, $\phi : X \to \R^h$ a 1-Lipschitz map for which there is $0 < \sigma < 1$ so that
  \begin{align}
    \Lip(\phi,x) \geq \sigma, \qquad \forall x \in X, \label{eq:Lip-sigma-bound}
  \end{align}
  and $0 < \kappa < 1$ so that $\cH^1(\gamma \cap X) = 0$ for all $\gamma \in T_\kappa(\phi)$. Then for any $0 < r < R$, $0 < \beta < \frac{1}{400}$, and $\epsilon > 0$, there exists $0 < \lambda < r$ so that the following holds. Let $x,y \in X$ be so that $d = \rho(x,y)$ satisfies $r < d < R$, let $\cB$ be any collection of disjoint balls for which $\sup_{B \in \cB} \diam(B) \leq \lambda$, and for each $B \in \cB$, let $x_B,y_B \in \beta B$ be so that $y_B \in E_\phi(x,\tfrac{\sigma\kappa}{1000})$.
  Let $(x_1,...,x_N) \in \cI(x,y)$ be an itinerary satisfying condition \eqref{eq:diff-shortcuts} and let $H, S \subseteq \{1,...,N-1\}$ be so that
  \begin{align}
    i \in S &\Longleftrightarrow \{x_i,x_{i+1}\} = \{x_B,y_B\} \quad \text{for some $B \in \cB$}, \notag \\
    i \in H &\Longleftrightarrow 
      x_i \notin E_\phi(x_{i+1},\tfrac{\sigma\kappa}{500}),  \label{eq:H-defn}
  \end{align}

  If 
  $\sum_{i\in H} \rho(x_i,x_{i+1}) < \frac{\sigma \kappa}{1000} 
  \sum_{i\in S} \rho(x_i,x_{i+1})$, then
  $$\sum_{i \notin S} \rho(x_{i},x_{i+1}) \geq (1-\epsilon) d.$$
\end{lemma}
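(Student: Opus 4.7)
The proof is by contradiction. Suppose there is some $\epsilon > 0$ for which no such $\lambda$ exists. Then for a sequence $\lambda_n \to 0$ we obtain bad configurations: points $x_n, y_n \in X$ with $r < \rho(x_n, y_n) = d_n < R$, disjoint families $\cB_n$ of balls with $\sup_{B \in \cB_n} \diam(B) \leq \lambda_n$, shortcut endpoints $(x_B, y_B) \in \beta B$ satisfying $y_B \in E_\phi(x_B, \tfrac{\sigma\kappa}{1000})$, and itineraries $\x^{(n)} = (x_1^{(n)}, \ldots, x_{N_n}^{(n)}) \in \cI(x_n, y_n)$ obeying \eqref{eq:diff-shortcuts} and the hypothesis $\sum_{i \in H_n} \rho(x_i^{(n)}, x_{i+1}^{(n)}) < \tfrac{\sigma\kappa}{1000} \sum_{i \in S_n} \rho(x_i^{(n)}, x_{i+1}^{(n)})$, yet with $\sum_{i \notin S_n} \rho(x_i^{(n)}, x_{i+1}^{(n)}) < (1-\epsilon) d_n$. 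The plan is to extract, in the limit, a biLipschitz curve fragment $\gamma : K \to X$ lying in $T_\kappa(\phi)$ with $\cH^1(\gamma(K)) > 0$, which contradicts the standing assumption $\cH^1(\gamma \cap X) = 0$ for all $\gamma \in T_\kappa(\phi)$.

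To this end, I parameterize each itinerary by its running non-shortcut arclength $\tau_i^{(n)} := \sum_{j < i,\, j \notin S_n} \rho(x_j^{(n)}, x_{j+1}^{(n)})$, obtaining a finite set $G_n := \{(\tau_i^{(n)}, x_i^{(n)}) : 1 \leq i \leq N_n\} \subset [0, (1-\epsilon) R] \times X$. Shortcut indices cause $\tau$ to stagnate while the $X$-coordinate jumps by at most $\beta \lambda_n \to 0$, and non-shortcut steps satisfy $\rho(x_i^{(n)}, x_{i+1}^{(n)}) = \tau_{i+1}^{(n)} - \tau_i^{(n)}$ by construction. Since $X$ is compact, pass to a subsequence so that $x_n \to x$, $y_n \to y$ with $\rho(x,y) \geq r$, and $G_n$ converges in Hausdorff distance to a compact $G \subseteq [0,(1-\epsilon)R] \times X$. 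The vanishing of shortcut jumps in the limit forces $G$ to be single-valued, so $G$ is the graph of a $1$-Lipschitz map $\gamma : K \to X$ over some compact $K$, with $x, y \in \gamma(K)$.

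To complete the contradiction, I would verify that $\gamma$ is biLipschitz, lies in $T_\kappa(\phi)$, and has positive $\cH^1$-image. The biLipschitz property reflects the isometric $\tau$-parameterization on non-shortcut steps together with the vanishing shortcut jumps; Lemma \ref{l:lots-shortcuts} (applicable because $\beta < 1/400$) controls the total shortcut contribution and prevents degeneracy. The transversality $\|(\phi \circ \gamma)'(t)\| \leq \kappa \Lip(\phi, \gamma(t)) \Lip(\gamma, t)$ a.e.\ follows because every non-$H$ segment (both $S$ and $T$) satisfies $|\phi(x_{i+1}^{(n)}) - \phi(x_i^{(n)})| \leq \tfrac{\sigma\kappa}{500} \rho(x_i^{(n)}, x_{i+1}^{(n)})$ by the definitions of $H$ and of the shortcut cones, while $H$-segments contribute total $\phi$-variation at most $\sum_{i \in H_n} \rho < \tfrac{\sigma\kappa}{1000} \sum_{i \in S_n} \rho$, a vanishing fraction of $\sum_{i \notin S_n} \rho$; combined with the pointwise lower bound $\Lip(\phi, \cdot) \geq \sigma$ this yields the required inequality almost everywhere along $\gamma$. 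Finally, since $\gamma$ biLipschitzly connects $x$ to $y$ with $\rho(x,y) \geq r > 0$, we have $\cH^1(\gamma(K)) > 0$, contradicting the standing hypothesis. The principal obstacle is the joint verification of the biLipschitz lower bound and transversality after passing to the limit: one must ensure that itinerary points do not accumulate into a degenerate arc, and that the $H$-contribution remains subdominant even where $\Lip(\phi,\cdot)$ is near $\sigma$. This is achieved by the precise numerical balance between $\tfrac{1}{500}$, $\tfrac{1}{1000}$, $\beta < 1/400$, and Lemma \ref{l:lots-shortcuts}.
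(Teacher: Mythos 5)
Your overall strategy (contradiction via a sequence of bad itineraries with $\lambda_n \to 0$, followed by a compactness argument producing a limit fragment in $T_\kappa(\phi)$) matches the paper's, but the specific limit you extract does not do the job, and the two steps you defer are exactly where the difficulty lies. The first gap is the claim that $\cH^1(\gamma(K)) > 0$ because ``$\gamma$ biLipschitzly connects $x$ to $y$''; this is a non sequitur. Your $K$ is a Hausdorff limit of the finite vertex sets $\{\tau_i^{(n)}\}$, and nothing forces it to have positive Lebesgue measure: an itinerary may consist of a bounded number of long non-shortcut hops (whose $\tau$-intervals contribute only their endpoints to $K$) interspersed with the many tiny shortcuts, so $K$ can be Lebesgue-null, in which case $\cH^1(\gamma(K)) = 0$ and there is no contradiction. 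Worse, your parameterization by running non-shortcut arclength collapses precisely the wrong part of the curve: the only portion of the limit that is guaranteed both to lie in $X$ \emph{and} to carry positive measure is the limit of the shortcut segments (individually of size at most $\beta\lambda_n \to 0$ but of total length at least $\epsilon d_n$), and your $\tau$ stagnates on exactly those segments, assigning them measure zero in the domain. The paper instead parameterizes the full polygonal curve at constant speed in an ambient Banach space and works with the limit $A$ of the shortcut parameter-intervals; establishing $|A| \geq \epsilon d$, $\gamma(A) \subset X$, and $\ell(\gamma|_A) \geq \frac{1}{100}|A|$ is the content of the covering argument ($|U| > 10|A_n|$) that uses $\beta < \frac{1}{400}$, and none of this has an analogue in your setup.

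The second gap is the biLipschitz claim for the limit. Itineraries are free to backtrack, so $\rho(\gamma(\tau),\gamma(\tau'))$ admits no lower bound in terms of $\tau' - \tau$; ``isometric on non-shortcut steps'' only controls consecutive vertices. The paper never asserts that the limit curve is biLipschitz; it applies Lemma \ref{l:kirchheim} to extract biLipschitz pieces of positive measure from $\gamma|_A$, and even then it does not place the limit in $T_\kappa(\phi)$ directly. Instead it runs a two-sided length count: the $T_\kappa(\phi)$ hypothesis forces $\ell(\phi \circ \gamma|_A) \geq \frac{\sigma\kappa}{100}|A|$ (otherwise a compact positive-measure biLipschitz piece of $\gamma|_A$ would violate the hypothesis), while the structure of the approximating itineraries (shortcuts and non-$H$ segments are $\phi$-transversal, $H$ segments have small total length, and $|U| < \frac{5}{4}|A|$) caps the $\phi$-length of $\gamma_n$ near $A$ from above, yielding the contradiction. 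Your proposal would need to supply substitutes for all three missing ingredients --- positive measure of the relevant limit set, its containment in $X$, and the extraction of a genuine biLipschitz piece --- before the transversality computation can be brought to bear.
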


\begin{proof}
  First assume that $\sum_{i \in S} \rho(x_{i},x_{i+1}) \geq d$. As $\beta < \frac{1}{3}$, we get from Lemma \ref{l:lots-shortcuts} that
  $$\sum_{i \notin S} \rho(x_{i},x_{i+1}) \geq \sum_{i \in S} \rho(x_i,x_{i+1}) \geq d,$$
  which gives the lemma.
  Thus, we may assume from now on that $\sum_{i \in S} \rho(x_{i},x_{i+1}) < d$.

  Suppose for contradiction that there are $r, R, \beta, \epsilon$ so that the lemma is false for some $\lambda_n \to 0$.
  We then get (after possibly passing to a subsequence) a sequence of counterexample itineraries for which $\lambda_n \to 0$, $d_n \to d \in [r,R]$, and, for each itinerary of the sequence,
$\sum_{i \in S_n} \rho(x_{i},x_{i+1}) < d_n$ and
$\sum_{i \notin S_n} \rho(x_{i},x_{i+1}) < (1-\epsilon) d_n$.

  We will view $X$ as isometrically embedded in a Banach space $Y$.
  As the total length of each itinerary must be in $[d_n,2d_n]$ and we can also assume $\frac{1}{2}d < d_n < 2d$, we can then parameterize the polygonal curve from each itinerary via $\gamma_n$, a constant speed Lipschitz map from $[0,4d]$ for which we can control the Lipschitz constant i.e. $\frac{1}{8} \leq \Lip(\gamma_n) \leq 1$. Then for any subset $D \subset \dom(\gamma_n)$, we have
  \begin{align}
    \frac{1}{8}|D| \leq \ell(\gamma_n|_D) \leq |D|. \label{eq:length-preserve}
  \end{align}
  
  Let $A_n,H_n'\subset [0,4d]$ be the preimage of the ``intervals" in $S_n,H_n$, respectively. We will view these as both collections of intervals and as the set defined by the union of these intervals. It should be clear based on context which we mean. We do state now that when using $|\cdot|$ (the Lebesgue measure), we will always view $A_n$ as the union of the intervals. 

  After passing to a subsequence, we may take $A$ to be the Hausdorff limit of  the sets $A_n$
  and $\gamma$ the limit of the $\gamma_n$. We then have that $A$ is compact, $|A| \geq |\limsup_n A_n| \geq \epsilon d$, $\Lip(\gamma) \leq 1$, and $\ell(\gamma) \geq d$.

  First note that $\gamma(A) \subset X$. Indeed, we have that $\gamma(\partial A_n) \subset X$ and $A_n \subset N_{\lambda_n}(\partial A_n)$. Additionally, $d_{\rm Haus}(A,A_n) \to 0$. The result now follows from compactness of $X$ and the fact that $\lambda_n \to 0$.

  We claim for any $\delta > 0$, there is an open cover $U$ of $A$ and a $N > 0$ so that $|U\setminus A|<\delta|A|$ and $|U| > 10|A_n|$ for any $n > N$.
  
  Indeed, as $\lim_{r\to 0} |N_r(A) \setminus A| = 0$, we can choose $r$ small enough so that setting $U = N_r(A)$, we get $|U \setminus A| < \delta |A|$. By compactness, we may further assume that $U$ is a finite union of disjoint open intervals $I_1 \cup ... \cup I_m$. We now need to find $N$. As $A_n$ Hausdorff converges to $A$, there exists some $N_0 > 0$ so that $A_n \subseteq U$ for all $n \geq N_0$. 

  First let $A^1,A^2$ be two consecutive intervals of $A_n$ and $J$ the interval between them. Let $B^1,B^2 \in \cB$ be the balls associated to the images of $A^1,A^2$ under $\gamma_n$. By condition \eqref{eq:diff-shortcuts}, these are different. Let $r_1,r_2$ be the radii of $B^1,B^2$. Then as $\beta < \frac{1}{400}$, we get
  \begin{align*}
    |J| \geq (1-\beta) (r_1 + r_2) \geq \frac{1-\beta}{16\beta}(|A^1| + |A^2|) > 20(|A^1| + |A^2|).
  \end{align*}
  It now easily follows that if an open interval $I$ intersects at least two intervals of $A_n$, then
  $$|A_n \cap I| < \frac{|I|}{20}.$$

  We let $1 \leq m' \leq m$ be so that (after a possible reindexing) $\# (I_j \cap A) \geq 2$ if and only if $1 \leq j \leq m'$. Note that $m'$ must exist as otherwise $A$ would be a finite set, contradicting the fact that $|A| \geq \epsilon d$.

  First let $1 \leq j \leq m'$. Then there exists two different points $x,y \in I_j \cap A$. Let $\eta = |x-y| > 0$ and $r' = \min(\eta/100,r)$. As $A_n$ Hausdorff converges to $A$, we get that there is some $N_j > 0$ so that $A \subseteq N_{r'}(A_n)$ and $\lambda_n < r'$ for all $n \geq N_j$. For any interval $I \in A_n$, we have $|I| \leq 8\lambda_n < \frac{\eta}{10}$, and so a simple triangle inequality argument gives that there exist two distinct intervals of $A_n$ that are within $r'$ of $x$ and $y$. As $r' < r$, these intervals also intersect $I_j$. Thus, 
  \begin{align*}
    |A_n \cap I_j| < \frac{|I_j|}{20}, \qquad \forall 1 \leq j \leq m', n \geq N_j.
  \end{align*}

  For $j > m'$, we have that $I_j \cap A$ is either a single point or empty. It then follows easily from the fact that $A_n$ Hausdorff converges to $A$ that $\lim_{n \to \infty} |A_n \cap I_j| = 0$. Choose $N_j > 0$ so that
  \begin{align*}
    |A_n \cap I_j| < \frac{|A_n|}{2m}, \qquad \forall m' < j \leq m, n \geq N_j,
  \end{align*}
  something possible as $|A_n| \geq \epsilon d_n$ always.
  
  Let $n \geq N = \max(N_0,...,N_m)$. As $n \geq N_0$, we get that $U$ covers $A_n$. Combining the last two displayed equations then gives
  \begin{align*}
    |A_n| = \sum_{i=1}^{m'} |A_n \cap I_j| + \sum_{i=m'+1}^m |A_n \cap I_j| < \sum_{i=1}^{m'} \frac{|I_j|}{20} + \sum_{i=m'+1}^m \frac{|A_n|}{2m} \leq \frac{|U|}{20} + \frac{|A_n|}{2}.
  \end{align*}
  This gives $|U| > 10|A_n|$ as desired.

  We now use the claim to construct a sequence $U_j \supseteq A$ so that $|U_j \setminus A| < 2^{-j}|A|$ and (after possibly passing to a subsequence) $|U_j| > 10|A_j|$.
  We then get
  \begin{align}
    |A| \geq 10 \limsup_n |A_n| \geq 10 \limsup_n \sum_{i \in S_n} \rho(x_i,x_{i+1}). \label{eq:A-lower}
  \end{align}

  We now claim that $\ell(\gamma|_A) \geq \frac{1}{100} |A|$. Suppose not to the contrary. Note that
  \begin{align}
    \ell(\gamma) \leq d + \frac{1}{10}|A|. \label{eq:gamma-lower}
  \end{align}
  Indeed, for any $n$, we have
  \begin{align*}
    \ell(\gamma_n) \leq \sum_{i \notin S_n} \rho(x_i,x_{i+1}) + \sum_{i \in S_n} \rho(x_i,x_{i+1}) \leq (1-\epsilon)d_n + \sum_{i \in S_n} \rho(x_i,x_{i+1}) \overset{\eqref{eq:A-lower}}{\leq} d_n + \frac{1}{10}|A|.
  \end{align*}
  We then get \eqref{eq:gamma-lower} by lower semicontinuity of length. 

  As $\ell(\gamma_n|_{U_j}) + \ell(\gamma_n|_{U_j^c}) \leq d_n + \frac{1}{10} |A|$ and
  $$\ell(\gamma_n|_{U_j}) \overset{\eqref{eq:length-preserve}}{\geq} \frac{1}{8} |U_j| \geq \frac{1}{8} |A|$$
  we get for any $n$ that
    $\ell(\gamma_n|_{U_j^c}) \leq d_n - \frac{1}{50}|A|$.
  Taking a limit and using lower semicontinuity, we get
  \begin{align*}
    \ell(\gamma|_{U_j^c}) \leq d - \frac{1}{50}|A|
  \end{align*}
  We also have that
  \begin{align*}
    \ell(\gamma|_{U_j}) \leq |U_j \setminus A| + \ell(\gamma|_A) \leq 2^{-j}|A| + \frac{1}{100} |A|.
  \end{align*}
  The last two inequalities give
  \begin{align*}
    \ell(\gamma) \leq d + \left( 2^{-j} - \frac{1}{100} \right)|A| < d
  \end{align*}
  when $j$ is large enough, which is a contradiction. This gives the claim.

  Note that $\gamma(A) \subset X$ and $\ell(\gamma|_A) \geq \frac{1}{100} |A|$.
  
  We claim that
  \begin{align}
    \ell(\phi \circ \gamma|_A) \geq \frac{\sigma \kappa}{100}|A|. \label{eq:big-phi}
  \end{align}
  Indeed, we have by Lemma \ref{l:kirchheim} that there is a disjoint decomposition $A = N \cup E_1 \cup E_2 \cup ...$ so that $\ell(\gamma|_N) = 0$ and $\gamma|_{E_i}$ are all biLipschitz. If $\ell(\phi \circ \gamma|_A) < \frac{\sigma\kappa}{100}|A| \leq \sigma \kappa \ell(\gamma|_A)$, then there exists some $i$ so that $E_i$ has positive measure and
  \begin{align*}
    \int_{E_i} \Lip(\phi \circ \gamma,t) ~dt = \ell(\phi \circ \gamma|_{E_i}) < \sigma \kappa \ell(\gamma|_{E_i}) = \int_{E_i} \sigma \kappa \Lip(\gamma,t) ~dt.
  \end{align*}
  There then exists a positive measure subset $A \subset E_i$ so that
  \begin{align}
    \Lip(\phi \circ \gamma,t) < \sigma \kappa \Lip(\gamma,t), \qquad \forall t \in A. \label{eq:pointwise-Lip-bound}
  \end{align}
  Letting $K \subset A$ be a positive measure compact subset, we get that $\gamma$ is biLipschitz on $K$ and $\ell(\gamma|_K \cap X) > 0$. As \eqref{eq:Lip-sigma-bound} and \eqref{eq:pointwise-Lip-bound} give that $\gamma|_K \in T_\kappa(\phi)$, we get a contradiction that proves \eqref{eq:big-phi}

  Let $U \supseteq A$ be any open cover so that $|U \setminus A| < \frac{1}{4}|A|$. Then by lower semicontinuity, we know that there exists $N$ so that
  $$\ell(\phi \circ \gamma_n|_U) \geq \frac{1}{2} \ell(\phi \circ \gamma|_U) \overset{\eqref{eq:big-phi}}{\geq} \frac{\sigma \kappa}{200} |A|, \qquad \forall n \geq N.$$
  From \eqref{eq:A-lower}, we have for $n$ sufficiently larger that $|A_n| < \frac{5}{4}|A|$ and so
  \begin{multline*}
    \ell(\phi \circ \gamma_n|_{U \cap (H_n' \cup A_n)}) \leq \sum_{i \in H_n} \rho(x_i,x_{i+1}) + \frac{\sigma \kappa}{1000} \ell(\gamma_n|_{A_n}) \\
    < \frac{\sigma \kappa}{1000}\sum_{i \in S_n} \rho(x_i,x_{i+1}) + \frac{\sigma \kappa}{1000} |A_n| \leq \frac{\sigma \kappa}{500} |A_n| < \frac{\sigma \kappa}{400} |A|.
  \end{multline*}
  This gives that $\ell(\phi \circ \gamma_n|_{U \cap (H_n' \cup A_n)^c}) \geq \frac{\sigma \kappa}{400} |A|$. But as intervals of $(H_n')^c$ satisfy
  $$|\phi(\gamma_n(x)) - \phi(\gamma_n(y))| < \frac{\sigma \kappa}{500} |\gamma_n(x) - \gamma_n(y)| \leq \frac{\sigma \kappa}{500} |x-y|,$$
  we get that
  \begin{align*}
    |U \cap (H_n' \cup A_n)^c| \geq \frac{500}{\sigma \kappa} \ell(\phi \circ \gamma_n|_{U \cap (H_n' \cap A_n)^c}) \geq \frac{5}{4}|A|.
  \end{align*}
  But, as $U$ was a small neighborhood of  $A$
  \begin{align*}
    |U \cap (H_n' \cup A_n)^c| \leq |U| = |A| + |U \setminus A| < \frac{5}{4}|A|,
  \end{align*}
  a contradiction.
\end{proof}

\section{Constructing shortcuts} \label{s:construction}

As our space is unrectifiable, Theorem \ref{th:null-char} gives a positive measure subset $F_0 \subset E$, a Lipschitz $\phi : F_0 \to \R^h$ so that $h < p$, and a $\kappa > 0$ so that $F_0 \in \tilde{C}(\phi,\kappa)$. In fact, the proof of Theorem \ref{th:null-char} even gives that $\phi$ can be chosen so that $\cH^p|_{F_0}$ has $h$ $\phi$-independent Alberti representations. It must then be that $\Lip(\phi,x) > 0$ for almost every $x \in F_0$. Thus, there exists a positive measure subset $F_1 \subset F_0$ and a $\sigma > 0$ so that
\begin{align*}
  \Lip(\phi,x) > \sigma, \qquad \forall x \in F_1.
\end{align*}
We may further suppose without loss of generality that there exists a constant $\ahlfors > 1$ so that
\begin{align*}
  \ahlfors^{-1} < \Theta_*(F_1,x) \leq \Theta^*(F_1,x) < \ahlfors, \qquad \cH^p\text{-a.e.} ~x \in F_1.
\end{align*}
We finally let $F \subset F_1$ be a compact subset of positive measure. Note that $F$ inherits the bounds we have for $F_1$ (at least almost everywhere).

For $R > 0$, we define the Borel set
\begin{align}
  F^R = \left\{ x \in F : \ahlfors^{-1} < \frac{\cH^p(F \cap B(x,r))}{r^n} < \ahlfors, \quad \forall r < R \right\}. \label{eq:Fr-defn}
\end{align}
Then $F^r \subseteq F^s$ when $s \leq r$ and $\lim_{R \to 0} \cH^p(F^R) = \cH^p(F)$.

Let $(\alpha_k)_{k=0}^\infty$, $(\epsilon_k)_{k=0}^\infty$, and $(\theta_k)_{k=0}^\infty$ be sequences of numbers in $(0,1)$ so that
\begin{align*}
  \alpha_n &\to 0, \\
  \prod_{i=1}^\infty (1-\epsilon_k) &\geq \frac{1}{2}, \\
  \theta_k &\leq \min(2^{-k}, \tfrac{\sigma \kappa}{1000}).
\end{align*}
Let $0 < \pad < 10^{-10}\kappa^{2}$. We will inductively define a finite set of shortcuts $\cS_k = \{\{x_i,y_i\}\}_{i=1}^{m_k}$ and scales $\lambda_k > 0$ so that if we define for any $k$
\begin{align*}
  \cS_{\leq k} &:= \bigcup_{j \leq k} \cS_j, \\
  S_{\leq k} &:= \bigcup_{\{x,y\} \in \cS_{\leq k}} \{x,y\}.
\end{align*}
then the following properties (along with others) will be maintained:

\begin{enumerate}[(i)]
  \item\label{item:F-contains} $S_{\leq k} \subseteq F^{\lambda_k}$,
  \item\label{item:S-bound} $\frac{2}{\pad} \max_{\{x,y\} \in \cS_k} \rho(x,y) \leq \lambda_k < \min_{\{x,y\} \in \cS_{k-1}} \rho(x,y)$,
  \item\label{item-shortcut-bounds} the balls $\cB_k = \{B(x_i,\pad^{-1} \rho(x_i,y_i))\}_{i=1}^{m_k}$ are disjoint,
  \item\label{item:shortcut-sep}$\left(\bigcup_{B \in \cB_k} B\right) \cap S_{\leq k-1} = \emptyset$,
  \item\label{item-cover} $\cH^p \left( F^{\lambda_k} \setminus \bigcup_{B \in \cB_k} B \right) < 2^{-k}$.
\end{enumerate}

\begin{remark} \label{r:unique-S}
  Note that \eqref{item:S-bound} gives that
  \begin{align*}
    \lambda_{k+1} < \min_{\{x,y\} \in \cS_k} \rho(x,y) \leq \max_{\{x,y\} \in \cS_k} \rho(x,y) \leq \frac{\pad}{2} \lambda_k.
  \end{align*}
  This tells us that $\lambda_k \to 0$ and the distances of pairs of $\cS_k$ are all well separated across $k$.
  Items \eqref{item:S-bound} and \eqref{item-shortcut-bounds} give that
  \begin{align}
    \max_{B \in \cB_k} \diam(B) \leq \lambda_k. \label{eq:max-diam}
  \end{align}
  We also have from \eqref{item-shortcut-bounds} and \eqref{item:shortcut-sep} that
  \begin{align}
    \rho(x,y) \leq \frac{\pad}{1-\pad} \rho\left(\{x,y\}, S_{\leq k} \setminus \{x,y\} \right), \qquad \forall \{x,y\} \in \cS_k. \label{eq:shortcut-neighbor}
  \end{align}
\end{remark}

Let $\cB_{-1} = \cS_{-1} = \emptyset$ and $\lambda_{-1} = \infty$.
Choose some $\lambda_0 > 0$ so that $F^{\lambda_0}$ has positive measure.
As $h < p$, by Corollary \ref{c:transverse-point} with $X = F^{\lambda_0}$ and $\theta = \theta_0$, there exists a full measure set $G_0 \subseteq F^{\lambda_0}$ so that for every  $x \in G_0$, there exists a sequence $y_{x,k} \in F^{\lambda_0} \setminus \{x\}$ so that
\begin{align*}
  y_{x,k} &\in E_\phi(x,\theta_0), \\
  \lim_{k \to \infty} y_{x,k} &= x, \\
  r_{x,k} := \pad^{-1} \rho(x,y_{x,k}) &< \frac{1}{2}\lambda_0.
\end{align*}
As $\bigcup_{x \in G_0} \bigcup_k B(x,r_{x,k})$ covers $G_0$ and is a Vitali covering, we get from Vitali's covering theorem and the fact that $G_0$ is a full measure subset of $F^{\lambda_0}$ that there is a finite collection $\cS_0 = \{\{x_i,y_{x_i,k_i}\}\}_{i=1}^{m_0}$ of pairs of $F^{\lambda_0}$ and corresponding disjoint balls $\cB_0 = \{B(x_i, \pad^{-1}\rho(x_i,y_{x_i,k_i}))\}_{i=1}^{m_0}$ so that
\begin{align*}
  \cH^p \left( F^{\lambda_0} \setminus \bigcup_{B \in \cB_0} B \right) < \frac{1}{2^0}.
\end{align*}

Now assume inductively that we have defined $\lambda_k > 0$, a finite collection of unordered pairs $\cS_k = \{\{x_i,y_i\}\}_{i=1}^{m_k}$ for which \eqref{item:F-contains}-\eqref{item-cover} are satisfied along with
\begin{enumerate}[(i)]
  \setcounter{enumi}{5}
  \item\label{item:r-decay-1} $\lambda_k$ is the result of Lemma \ref{l:small-lower-bound} with $R = \diam(F)$, $r = \min\{\rho(x,y) : x,y \in S_{k-1}, x\neq y\}$ and $\epsilon = \epsilon_k$,
  \item\label{item:transverse} $y \in E_\phi(x,\theta_k)$ for all $\{x,y\} \in \cS_k$,
\end{enumerate}

We will now define $\lambda_{k+1}$ and $\cS_{k+1}$.
Let $R = \diam(F)$, $r = \min\{\rho(x,y) : x,y \in S_k, x\neq y\}$, and apply Lemma \ref{l:small-lower-bound} with $\epsilon = \epsilon_{k+1}$ to get $\lambda_{k+1} < r$.
By Corollary \ref{c:transverse-point} with $\theta = \theta_{k+1}$ and $X = F^{\lambda_{k+1}} \setminus S_{\leq k}$, there is a full measure set $G_{k+1} \subset F^{\lambda_{k+1}} \setminus S_{\leq k}$ so that for every $x \in G_{k+1}$, there is a sequence $y_{x,j} \in F^{\lambda_{k+1}} \setminus \{x\}$ so that
\begin{align*}
  \lim_{j \to \infty} y_{x,j} &= x, \\
  y_{x,j} &\in E_\phi(x,\theta_k), \\
  \pad^{-1} \rho(x,y_{x,j}) &< \min\left(\rho(x,S_{\leq k}), \tfrac{1}{2}\lambda_{k+1}\right).
\end{align*}
Using Vitali's covering theorem as above, we get a finite collection $\cS_{k+1} = \{\{x_i,y_i\}\}_{i=1}^{m_{k+1}}$ of pairs of $F^{\lambda_{k+1}}$ so that the balls $\cB_{k+1} = \{B(x_i,\pad^{-1}\rho(x_i,y_i))\}_{i=1}^{m_{k+1}}$ are disjoint and satisfy
$$\cH^p\left(F^{\lambda_{k+1}} \backslash \bigcup_{B \in \cB_{k+1}} B \right) < \frac{1}{2^{k+1}}.$$
We can easily see that we get items \eqref{item:F-contains}-\eqref{item:transverse}.

Now that we have constructed $\cS_k$ for all $k \geq 0$, we define the cost function where $c(x,y) = \alpha_k \rho(x,y)$ if $\{x,y\} \in \cS_k$ for some $k$ and $c(x,y) = \rho(x,y)$ otherwise.
Note that the choice of $k$ is unique by Remark \ref{r:unique-S}. We now also define $\cS_{\geq k} = \bigcup_{j \geq k} \cS_j$ so that $\cS = \cS_{\geq 0}$.

We also define a function
\begin{align*}
  L : X \times X &\to \Z_+ \\
  (p,q) &\mapsto \begin{cases}
    k, & \mathrm{if} ~\{p,q\} \in \cS_k, \\
    \infty, & \mathrm{if} ~\{p,q\} \notin \cS.
  \end{cases}
\end{align*}
Again, this is well defined. Given an itinerary $\x = (x_1,...,x_N) \in \cI$, we define
\begin{align*}
  L(\x) = \min_{1 \leq i < N} L(x_i,x_{i+1}).
\end{align*}
By abuse of notation, we also define a function
\begin{align*}
  L : X &\to \Z_+ \\
  x &\mapsto \begin{cases}
    k, &\mathrm{if} ~p \in S_k \\
    \infty, &\mathrm{otherwise}.
  \end{cases}
\end{align*}
It should be clear by context which version of $L$ we are using.

We now define $d$ as in Section \ref{s:preliminaries} with the cost function $c$ defined above.

Let $\x = (x_1,...,x_N) \in \cI_A(x,y)$. For each $n \geq 0$, we let $\{k^{(n)}_i\}_{i=1}^{m_n} \subset 2\N$ denote the indices for which $L(x_i,x_{i+1}) \leq n$. We then define the itinerary
\begin{align*}
  D_n(\x) = (x_1,x_{k^{(n)}_1},x_{k^{(n)}_1+1},x_{k^{(n)}_2},x_{k^{(n)}_2+1},...,x_{k^{(n)}_{m_n}},x_{k^{(n)}_{m_n}+1},x_N) \in \cI_A(x,y),
\end{align*}
which can be viewed as simply the subitinerary of $\x$ that only uses the shortcuts of level at most $n$. By finiteness of $\x$, we have that there is some $M \geq 1$ so that $D_M(\x) = \x$.

\section{David-Semmes regularity}

We will continue to use the notation from Section \ref{s:construction}.

Proposition \ref{l:DS-regularity} below is the main result of this section, which shows that the identity mapping between $(F,\rho)$ and $(F,d)$ is David-Semmes-regular in the sense of Definition 1.3{ of \cite{DS00-regular-mappings}.

\begin{proposition}\label{l:DS-regularity}
  There is a constant $C > 0$ so that for all $x$ and $r > 0$, there exist $y_1,y_2,y_3$ so that
  \begin{align*}
    B_\rho(x,r) \subseteq B_d(x,r) \subseteq B_\rho(y_{1},Cr) \cup B_\rho(y_{2},Cr) \cup B_\rho(y_{3},Cr).
  \end{align*}
\end{proposition}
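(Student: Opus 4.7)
The inclusion $B_\rho(x, r) \subseteq B_d(x, r)$ is immediate from $d \leq \rho$. For the reverse, given $y \in B_d(x, r)$, the plan is to fix an alternating itinerary $\x = (x_1, \ldots, x_N) \in \cI_A(x, y)$ with $c(\x) < r$ and locate $y$ within one of three $\rho$-balls of radius $Cr$ determined by a ``dominant'' shortcut used in $\x$.

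Since the non-shortcut segments of $\x$ contribute their full $\rho$-length to $c(\x)$, their total $\rho$-length is at most $r$. If $\x$ uses no shortcut, then $\rho(x, y) < r$ and we may take $y_1 = y_2 = y_3 = x$. Otherwise, let $k_0$ denote the minimum level of any shortcut used in $\x$, and let $u = x_{i_0}$, $v = x_{i_0+1}$ be the first such shortcut pair, so $\{u, v\} \in \cS_{k_0}$. The subsegment $x_1, \ldots, x_{i_0}$ uses only non-shortcut edges together with shortcuts at levels strictly greater than $k_0$; by item \eqref{item:S-bound} of Section \ref{s:construction} and Remark \ref{r:unique-S}, each such higher-level shortcut has $\rho$-length bounded by $\frac{\pad}{2}\lambda_{k_0+1}$, and the cost bound constrains their aggregate, giving $\rho(x, u) \leq Cr$.

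To bound $\rho(v, y)$, I would apply Lemma \ref{l:lots-shortcuts} at level $k_0$ to the full itinerary. Since alternating itineraries never reuse a shortcut, consecutive level-$k_0$ segments automatically correspond to different balls of $\cB_{k_0}$, so condition \eqref{eq:diff-shortcuts} is satisfied. The lemma then forces the total level-$k_0$ shortcut $\rho$-length $T_{k_0}$ to be either at most $\pad\,\rho(x,y)$, or at most $\frac{2\pad}{1-\pad}r$. Iterating over higher levels $k > k_0$ (where $\lambda_k < \rho(x, y)$ automatically holds once $k$ is large enough), and using the cost bound $\sum_k \alpha_k T_k \leq c(\x) < r$ to limit how many levels contribute substantially, I expect to deduce $\rho(v, y) \leq Cr$. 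Setting $y_1 = x$, $y_2 = u$, $y_3 = v$ then yields the cover: $y$ lies in $B_\rho(v, Cr)$ when at least one level-$k_0$ shortcut separates $x$ from $y$ in the itinerary, and otherwise in $B_\rho(x, Cr) \cup B_\rho(u, Cr)$.

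The main obstacle is controlling the cumulative $\rho$-length contribution from shortcuts across infinitely many possible levels. Since $\alpha_k \to 0$, the cost bound alone does not restrict the number of active levels; the key is to combine Lemma \ref{l:lots-shortcuts} with the geometric decay $\lambda_{k+1} < \frac{\pad}{2}\lambda_k$ to absorb the higher-level contributions into an $O(r)$ term, and to handle separately the few levels $k$ with $\lambda_k \geq \rho(x,y)$, where the diameter hypothesis of Lemma \ref{l:lots-shortcuts} fails and a direct argument via \eqref{eq:shortcut-neighbor} is needed.
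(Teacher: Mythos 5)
There are two genuine gaps here. The first is a uniformity problem: the proposition requires the three centers $y_1,y_2,y_3$ to be chosen once and to cover \emph{all} of $B_d(x,r)$, but your $u,v$ are extracted from an itinerary to a particular $y$ and therefore vary with $y$. As stated, your argument only shows that each $y$ lies within $Cr$ of \emph{some} point depending on $y$, which is vacuous. The paper resolves this with Lemma \ref{l:bounded-geometry}: there is at most \emph{one} shortcut pair of $\rho$-length $\geq \bddgeom r$ meeting $B_d(x,r)$, so the ``big'' shortcut (if any) is the same for every $y$, and those two endpoints serve as $y_2,y_3$; shortcuts of length $< \bddgeom r$ are absorbed into $B_\rho(x,Cr)$ via Lemma \ref{l:small-jumps}. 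Your ``first shortcut of minimal level'' need not be large relative to $r$ and need not be unique across different targets $y$, so it cannot play this role.

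The second gap is in the core estimate. You claim that the cost bound $c(\x)<r$, Lemma \ref{l:lots-shortcuts}, and the geometric decay of the $\lambda_k$ together control the aggregate $\rho$-length of the shortcuts and hence $\rho(x,u)$ and $\rho(v,y)$. They do not: shortcuts at level $k$ cost only $\alpha_k$ times their $\rho$-length with $\alpha_k\to 0$, and the alternative conclusion of Lemma \ref{l:lots-shortcuts} bounds the level-$k$ shortcut length by the length of the \emph{complementary} segments, which includes shortcuts at other levels -- so the estimate is circular and the sum over levels of the terms $\beta\rho(x,y)$ is not summable. A priori an itinerary could travel arbitrarily far at arbitrarily small cost. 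What rules this out is the geometric input your plan never uses: each shortcut $\{x_B,y_B\}$ is chosen transversal to $\phi$ (item \eqref{item:transverse}), and $F$ is $\cH^1$-null on curves in $T_\kappa(\phi)$. This is exactly the content of Lemma \ref{l:base} (via Lemma \ref{l:small-lower-bound}, Lemma \ref{l:one-step}, and the tree/induction argument), which yields $\rho(x',y')\leq \Ctwo\, c(\x')$ for the relevant subitineraries and is the engine behind Corollary \ref{c:big-cost}, Lemma \ref{l:tail-bounds}, and Lemma \ref{l:small-jumps}. Without an argument of this type, the bounds $\rho(x,u)\leq Cr$ and $\rho(v,y)\leq Cr$ cannot be derived.
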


Note that we have not yet proven that $d$ is actually a metric. This will be proven in Lemma \ref{l:is-metric}. The proof of that uses results of this section, but not the proposition.

We will need the following lemma.

\begin{lemma} \label{l:base}
  There exists a constant $\Ctwo > 0$ so that the following holds. Let $\x \in \cI_A(x,y)$ be so that there is some $k \geq 0$ for which $x,y \in S_{\leq k}$ and $L(\x) \geq k+1$. 
  Then $\rho(x,y) \leq \Ctwo c(\x)$.
\end{lemma}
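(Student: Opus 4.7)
The plan is to apply Lemma \ref{l:small-lower-bound} at level $k+1$ (with $\cB=\cB_{k+1}$ and $S$ the indices of level-$(k+1)$ shortcut segments of $\x$) and recurse on the sub-itineraries obtained by removing those shortcuts. The hypotheses of Lemma \ref{l:small-lower-bound} are verified from the construction: the balls in $\cB_{k+1}$ are disjoint (item \eqref{item-shortcut-bounds}) with maximum diameter at most $\lambda_{k+1}$ (equation \eqref{eq:max-diam}); each shortcut pair $\{x_B,y_B\}\in\cS_{k+1}$ lies in $\pad B$ with $y_B\in E_\phi(x_B,\theta_{k+1})\subseteq E_\phi(x_B,\sigma\kappa/1000)$ via item \eqref{item:transverse} and the choice $\theta_{k+1}\leq\sigma\kappa/1000$; condition \eqref{eq:diff-shortcuts} holds because $\x$ is alternating and distinct shortcut pairs of $\cS_{k+1}$ correspond to distinct balls of $\cB_{k+1}$; and $\rho(x,y)\in(r_{k+1},\diam F)$ since distinct $x,y\in S_{\leq k}$ satisfy $\rho(x,y)\geq\lambda_{k+1}$ (derived from \eqref{item-shortcut-bounds} applied at all lower levels). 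A crucial preliminary observation is that every shortcut segment, at any level $j\geq k+1$, satisfies the transversality $|\phi(x_i)-\phi(x_{i+1})|\leq\theta_j\rho(x_i,x_{i+1})\leq(\sigma\kappa/1000)\rho(x_i,x_{i+1})$, so the set $H$ from Lemma \ref{l:small-lower-bound} consists solely of non-shortcut indices; in particular $\sum_{i\in H}\rho(x_i,x_{i+1})$ is bounded above by the total cost of non-shortcut segments of $\x$.

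Next, I would induct on the depth $D=M-k$, where $M$ denotes the maximum shortcut level appearing in $\x$. The base case, when $\x$ has no shortcuts, follows from the triangle inequality $c(\x)=\sum_i\rho(x_i,x_{i+1})\geq\rho(x,y)$. For the inductive step, decompose $\x$ into sub-itineraries $\x^0,\ldots,\x^s$ at the $s$ level-$(k+1)$ shortcut segments $(p_t,q_t)$. Each $\x^t$ has endpoints $a_t,b_t\in S_{\leq k+1}$, contains no level-$(k+1)$ shortcuts, and has strictly smaller depth with respect to $k+1$; by the inductive hypothesis, $c(\x^t)\geq \rho(a_t,b_t)/\Ctwo$. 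Set $d_t:=\rho(a_t,b_t)$ and $T:=T_{k+1}=\sum_{t=1}^s\rho(p_t,q_t)$. Two complementary bounds on $\sum_t d_t$ are available: the triangle inequality along the outer polygon gives $\sum_t d_t\geq d-T$; and for $s\geq 2$, the disjointness of balls in $\cB_{k+1}$ together with $p_t,q_t\in\pad B_t$ yields $d_t=\rho(q_t,p_{t+1})\geq (1-\pad)\pad^{-1}[\rho(p_t,q_t)+\rho(p_{t+1},q_{t+1})]$ for $1\leq t\leq s-1$, summing to $\sum_t d_t\geq(1-\pad)\pad^{-1}T$. Combining these yields $\sum_t d_t\geq(1-\pad)d$ uniformly in $T$, and hence
$$c(\x)\geq \sum_t c(\x^t)+\alpha_{k+1}T\geq (1-\pad)d/\Ctwo+\alpha_{k+1}T.$$

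The main obstacle will be preventing the factor $(1-\pad)$ from compounding across depth levels and thereby blowing up $\Ctwo$ under iteration. To close the induction with a uniform constant, I would invoke the dichotomy of Lemma \ref{l:small-lower-bound}: in Case B (when $\sum_{i\in H}\rho\geq(\sigma\kappa/1000)T$) the bound $c(\x)\geq \sum_{i\in H}\rho\geq(\sigma\kappa/2000)d$ holds directly once $T\geq d/2$; in Case A the lemma provides the stronger path-length bound $\sum_{i\notin S_{k+1}}\rho(x_i,x_{i+1})\geq(1-\epsilon_{k+1})d$, whose accumulated loss across depth is controlled by the designed summability $\prod_j(1-\epsilon_j)\geq 1/2$ built into the choice of the $\epsilon_k$'s. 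Combined with the ball-separation estimate above and the smallness $\pad<10^{-10}\kappa^2$ from the construction, a careful book-keeping of these two cases should yield a uniform constant $\Ctwo$ depending only on $\sigma$, $\kappa$, and $\pad$.
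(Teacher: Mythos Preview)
Your overall strategy—recurse on the sub-itineraries between level-$(k+1)$ shortcuts and use Lemma \ref{l:small-lower-bound} to control the loss—is the right instinct, and several preliminary observations (that $H$ misses all shortcut indices, that \eqref{eq:diff-shortcuts} holds, that $\sum_t d_t\geq(1-\pad)d$) are correct. However, there is a genuine gap in how you close the induction in Case A.

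The bound Lemma \ref{l:small-lower-bound} gives you is
\[
\sum_{i\notin S}\rho(x_i,x_{i+1})\;\geq\;(1-\epsilon_{k+1})\,d,
\]
a bound on the total \emph{segment length} of the non-level-$(k+1)$ part of $\x$. Your recursion, on the other hand, uses the inductive hypothesis $c(\x^t)\geq d_t/\Ctwo$, where $d_t=\rho(a_t,b_t)$ is the \emph{endpoint distance} of the sub-itinerary $\x^t$. By the triangle inequality, $d_t\leq \mathrm{length}(\x^t)$, so
\[
\sum_t d_t\;\leq\;\sum_{i\notin S}\rho(x_i,x_{i+1}),
\]
which is the wrong direction: the path-length bound from Case A does not upgrade your cruder estimate $\sum_t d_t\geq(1-\pad)d$ to $\sum_t d_t\geq(1-\epsilon_{k+1})d$. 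Consequently, in Case A you are still stuck with the $(1-\pad)$ factor per level, and the loss compounds. Case B only rescues you when $T\geq d/2$; when $T$ is small (which can certainly occur in either case) you have no mechanism to avoid the compounding.

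The paper's proof (Lemma \ref{l:one-step}) resolves exactly this mismatch. In its alternative (1), the children of a node are \emph{not} the sub-itineraries $\x^t$ at level $k+1$; rather, for each odd $\ell$ one descends to a possibly deeper level $j_\ell\geq k+1$ and takes as children the non-shortcut \emph{segments} of $D_{j_\ell}(\x[i_\ell,i_{\ell+1}])$. For such a child the segment length \emph{equals} the endpoint distance, so the bound $\sum\rho(\text{children})\geq(1-\epsilon_{k+1})d$ feeds directly into the recursion. In its alternative (2), the paper does more than your Case B: it isolates a subset $\cD$ of $H$-segments whose non-shortcut expansion stays bounded at \emph{all} future levels, and then uses the $\phi$-transversality (item \eqref{item:transverse}) together with Lemma \ref{l:shortcut-level-bound} to show that the cost of those sub-itineraries is already comparable to their endpoint distances plus the total shortcut length—this is where the smallness $\pad<10^{-10}\kappa^2$ is actually used. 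Your proposal does not contain this $\phi$-based argument, and without it the dichotomy does not close.
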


We will prove Lemma \ref{l:base} in Section \ref{s:lemma52}. The proof is independent of the rest of this subsection.

\begin{corollary}\label{c:big-cost}
    There is a constant $\costconst > 0$ so that the following holds.
  Suppose $\x \in \cI_A(x,y)$ so that $x,y \in S_{\leq k}$, $\{x,y\} \notin \cS_{\leq k}$, and $L(\x) \geq k$. Then $\rho(x,y) \leq \costconst c(\x)$.
\end{corollary}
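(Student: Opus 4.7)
The plan is to reduce Corollary \ref{c:big-cost} to Lemma \ref{l:base} by splitting the itinerary at its level-$k$ shortcuts. If $L(\x) \geq k+1$ then Lemma \ref{l:base} applies directly and gives the conclusion with $\costconst = \Ctwo$. Thus the substantive case is $L(\x) = k$, in which $\x$ uses at least one shortcut from $\cS_k$.

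I would enumerate the level-$k$ shortcuts in $\x$ as $(x_{i_j}, x_{i_j+1}) \in \cS_k$ for $j = 1, \ldots, m$ with $i_1 < \cdots < i_m$, and decompose
$$\x = \x_0 \cdot (x_{i_1}, x_{i_1+1}) \cdot \x_1 \cdots (x_{i_m}, x_{i_m+1}) \cdot \x_m,$$
where the intermediate sub-itineraries $\x_\ell$ satisfy $L(\x_\ell) \geq k+1$. Each $\x_\ell$ inherits the alternating property (a routine parity check, since level-$k$ shortcuts occupy even positions in $\x$), and each of its two endpoints lies in $S_{\leq k}$, being either $x$, $y$, or an endpoint of a shortcut in $\cS_k \subseteq S_{\leq k}$. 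Lemma \ref{l:base} then yields
$$\rho(\text{endpoints of } \x_\ell) \leq \Ctwo\, c(\x_\ell).$$

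The heart of the argument is bounding the total length of the level-$k$ shortcuts. For this I would invoke the separation estimate \eqref{eq:shortcut-neighbor},
$$\rho(x_{i_j}, x_{i_j+1}) \leq \tfrac{\pad}{1-\pad}\,\rho\bigl(\{x_{i_j}, x_{i_j+1}\},\, z\bigr)$$
for any $z \in S_{\leq k} \setminus \{x_{i_j}, x_{i_j+1}\}$. A suitable $z$ is available adjacent to each shortcut within the itinerary: for $j < m$, take $z = x_{i_{j+1}}$, which is distinct from the shortcut's endpoints because the balls of $\cB_k$ are disjoint; for $j = m$ with $m \geq 2$, take $z = x_{i_{m-1}+1}$ by the same reasoning. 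Then $\rho(\{x_{i_j}, x_{i_j+1}\}, z)$ is at most the distance between the endpoints of some adjacent $\x_\ell$, which is in turn bounded by $\Ctwo\, c(\x_\ell)$. Since each $\x_\ell$ is charged by at most two shortcuts, summing and combining with the triangle inequality
$$\rho(x,y) \leq \sum_\ell \rho(\text{endpts of }\x_\ell) + \sum_j \rho(x_{i_j}, x_{i_j+1})$$
delivers $\rho(x,y) \leq \costconst\, c(\x)$ with, say, $\costconst = \Ctwo\bigl(1 + 2\pad/(1-\pad)\bigr)$.

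The only delicate point is the corner case $m = 1$ in which an endpoint of the unique level-$k$ shortcut coincides with $x$ or $y$, making neither of the "adjacent shortcut endpoint" candidates above available. This is precisely where the hypothesis $\{x,y\} \notin \cS_{\leq k}$ enters the picture: it forbids $x$ and $y$ from together being the two endpoints of the sole level-$k$ shortcut, so at least one of $x, y$ remains a legitimate witness $z \in S_{\leq k} \setminus \{x_{i_1}, x_{i_1+1}\}$ to which the separation bound and Lemma \ref{l:base} can be applied. Once this case is handled the bookkeeping is routine.
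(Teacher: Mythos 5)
Your argument is correct and is essentially the paper's own proof: both reduce to Lemma \ref{l:base} on the sub-itineraries between consecutive level-$k$ shortcuts, control each level-$k$ shortcut by an adjacent non-shortcut piece via the separation estimate \eqref{eq:shortcut-neighbor}, and invoke the hypothesis $\{x,y\} \notin \cS_{\leq k}$ exactly where you do, namely to guarantee a nondegenerate neighbor when there is only one level-$k$ shortcut. The only differences are notational (the paper organizes the splitting through $D_k(\x)$) and in the precise bookkeeping of which neighbor witnesses each shortcut, which affects only the value of $\costconst$.
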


\begin{proof}
    Let $\x = (x_1,...,x_N)$.
    If $L(\x) \geq k+1$, then the corollary follows from Lemma \ref{l:base}. Thus, assume $L(\x) = k$.
    Let $1 = i_1 < ... < i_M = N$ be so that
    \begin{align*}
        D_k(\x) = (x_{i_1},...,x_{i_M}).
    \end{align*}
    Note that $M \geq 4$ as $L(\x) = k$. Let $\sepconst = \frac{\beta}{1-\beta}$.

    If $M = 4$, then as $\{x,y\} \notin \cS_{\leq k}$, it cannot be that both $x_1 = x_2$ and $x_3 = x_4$. Suppose that $x_1 \neq x_2$. As $x_2 \in S_k$ and $x_1 \in S_{\leq k}$, we have by \eqref{eq:shortcut-neighbor} that $\rho(x_2,x_3) \leq \sepconst \rho(x_1,x_2)$. Thus,
    \begin{align*}
      \rho(x,y) \leq (\sepconst + 1) \rho(x_1,x_2) + \rho(x_3,x_4) \leq (\sepconst + 1) c(\x),
    \end{align*}
    as desired. The case when $x_3 \neq x_4$ is handled similarly.

    Now suppose $M > 4$.
    Then $x_{i_j} \in S_{\leq k}$ for all $j$ and $L(\x[i_j,i_{j+1}]) \geq k+1$ for all $j \in 2\N - 1$. Thus, by Lemma \ref{l:base}, we get
    \begin{multline*}
      \rho(x_{i_{j-1}},x_{i_j}) + \rho(x_{i_j},x_{i_{j+1}}) + \rho(x_{i_{j+1}},x_{i_{j+2}}) \overset{\eqref{eq:shortcut-neighbor}}{\leq} (2\sepconst +1) \rho(x_{i_j},x_{i_{j+1}}) \leq (2 \sepconst + 1) \Ctwo c(\x[i_j,i_{j+1}]), \\
      \forall j \in (2\N-1) \cap \{3,...,M-2\}.
    \end{multline*}
    Note also that $\rho(x_{i_j},x_{i_{j+1}}) \leq \Ctwo c(x_{i_j},x_{i_{j+1}})$ when $j = 1,N-1$.

    Summing over $j$ we get that 
    \begin{align*}
      \rho(x,y) &\leq \rho(x_{i_1},x_{i_2}) + \rho(x_{i_{M-1}},x_{i_M}) + \sum_{j \in (2\N-1) \cap \{3,...,M-2\}} \rho(x_{i_{j-1}},x_{i_j}) + \rho(x_{i_j},x_{i_{j+1}}) + \rho(x_{i_{j+1}},x_{i_{j+2}}) \\
      &\leq \Ctwo c(\x[i_1,i_2]) + \Ctwo c(\x[i_{M-1},i_M]) + (2\sepconst + 1)\Ctwo \sum_{j \in (2\N-1) \cap \{3,...,M-2\}} c(\x[i_j,i_{j+1}]) \\
      &\leq (2 \sepconst + 1) \Ctwo c(\x).
    \end{align*} 
    Setting $\costconst=(2\sepconst + 1)\Ctwo$ we get the corollary. 
\end{proof}

\begin{lemma} \label{l:bounded-geometry}
  There is some $\bddgeom > 0$ so that for all $x$ and $r > 0$, there is at most one pair $\{y_1,y_2\} \in \cS$ so that $\rho(y_1,y_2) \geq \bddgeom r$ and $\{y_1,y_2\} \cap B_d(x,r) \neq \emptyset$.
\end{lemma}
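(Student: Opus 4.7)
The plan is to argue by contradiction. Suppose there exist two distinct pairs $\{p_1,p_2\} \in \cS_k$ and $\{q_1,q_2\} \in \cS_l$ with $k \leq l$, both with $\rho$-diameter at least $\bddgeom r$ and both meeting $B_d(x,r)$; say $p_1,q_1 \in B_d(x,r)$, so $d(p_1,q_1) < 2r$. A preliminary observation is that distinct pairs in $\cS$ have disjoint endpoint sets: within a single level this follows from the disjointness of the balls in $\cB_k$ (any shared endpoint would lie in two disjoint balls), and across levels it follows because at each level the new pairs are formed from $F^{\lambda_j} \setminus S_{\leq j-1}$. Consequently each point of $S_{\leq l}$ belongs to at most one pair in $\cS$, so in particular $\{p_1,q_1\} \notin \cS_{\leq l}$.

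Now I would establish the needed $\rho$-separation using \eqref{eq:shortcut-neighbor} applied to $\{q_1,q_2\} \in \cS_l$: since $p_1 \in S_{\leq k} \subseteq S_{\leq l}$ and $p_1 \notin \{q_1,q_2\}$,
\[
\rho(p_1,q_1) \;\geq\; \rho(\{q_1,q_2\},\,S_{\leq l} \setminus \{q_1,q_2\}) \;\geq\; \frac{1-\pad}{\pad}\rho(q_1,q_2) \;\geq\; \frac{1-\pad}{\pad}\bddgeom\, r.
\]

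The core of the proof is then to lower bound $c(\x)$ for every alternating itinerary $\x=(x_1,\dots,x_N) \in \cI_A(p_1,q_1)$ by invoking Corollary~\ref{c:big-cost}. Split into two cases based on $L(\x)$. If $L(\x) \geq l$, then because $p_1,q_1 \in S_{\leq l}$ and $\{p_1,q_1\} \notin \cS_{\leq l}$, Corollary~\ref{c:big-cost} gives $\rho(p_1,q_1) \leq \costconst\, c(\x)$, hence $c(\x) \geq \frac{(1-\pad)\bddgeom}{\costconst\,\pad}r$. If $L(\x) < l$, let $i_0$ be the largest even index such that the shortcut step $(x_{i_0},x_{i_0+1})$ has level strictly less than $l$. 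Then the sub-itinerary $\x[i_0+1,N]$ from $x_{i_0+1}$ to $q_1$ is alternating (preserved because $i_0$ is even), uses only shortcuts of level $\geq l$, and its endpoints lie in $S_{\leq l}$ with $\{x_{i_0+1},q_1\} \notin \cS_{\leq l}$ (since $x_{i_0+1} \in S_{\leq l-1}$ while $q_1,q_2 \in S_l \setminus S_{\leq l-1}$). So Corollary~\ref{c:big-cost} applies to the sub-itinerary, yielding $\rho(x_{i_0+1},q_1) \leq \costconst\, c(\x[i_0+1,N]) \leq \costconst\, c(\x)$, and the same argument as above gives $\rho(x_{i_0+1},q_1) \geq \frac{1-\pad}{\pad}\bddgeom\, r$.

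Combining, every itinerary satisfies $c(\x) \geq \frac{(1-\pad)\bddgeom}{\costconst\, \pad}\,r$, so $d(p_1,q_1) \geq \frac{(1-\pad)\bddgeom}{\costconst\, \pad}\,r$. Choosing $\bddgeom > \frac{2\costconst\,\pad}{1-\pad}$ contradicts $d(p_1,q_1) < 2r$. The main technical hurdle is the second case: one must correctly identify a sub-itinerary whose endpoints and level profile fit the hypotheses of Corollary~\ref{c:big-cost}, and verify that the alternating structure is preserved. Once the disjointness-of-endpoints observation is in place, the rest reduces to a clean application of \eqref{eq:shortcut-neighbor} and Corollary~\ref{c:big-cost}.
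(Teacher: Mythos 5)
Your proof is correct and follows essentially the same strategy as the paper's: reduce to a sub-itinerary satisfying the hypotheses of Corollary \ref{c:big-cost}, then use \eqref{eq:shortcut-neighbor} to show its endpoints are $\rho$-separated by $\gtrsim \bddgeom r$, contradicting $d(p_1,q_1) < 2r$. The only (cosmetic) difference is that you select the sub-itinerary by level (the tail after the last shortcut of level $< l$), whereas the paper selects a maximal interval avoiding shortcuts of $\rho$-length $\geq \bddgeom r$; by Remark \ref{r:unique-S} length and level are monotonically linked, so these amount to the same thing.
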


\begin{proof}
  Suppose for contradiction there are two different pairs $\{p_1,p_2\},\{q_1,q_2\} \in \cS$ satisfying the conclusion of the lemma. We may suppose that (without loss of generality) $p_1,q_1 \in B_d(x,r)$. As $p_1,q_1 \in B_d(x,r)$, we get by openness of balls that there is an itinerary $\x = (x_1,...,x_N) \in \cI_A(p_1,q_1)$ so that $c(\x) < 2r$.

  The itinerary $\x$ could use shortcuts of $\rho$-length greater than $\bddgeom r$.
  We take $1 \leq i < j \leq N$ so that $\x' = \x[i,j] \in \cI_A(x',y')$ has $(x',y') \notin \cS$, all shortcuts of $\x'$ have $\rho$-length at most $\bddgeom r$ and $j - i$ is maximal. By maximality of $\x'$ and the assumption that $\{p_1,p_2\}$ and $\{q_1,q_2\}$ have $\rho$-length at least $\bddgeom r$, we may also assume that $x',y'$ are associated to shortcuts of $\rho$-length at least $\bddgeom r$. It then follows that $\max(L(x'),L(y')) \leq L(\x')$.
  Then by Corollary \ref{c:big-cost} we get
  $\rho(x',y')\leq \costconst c(\x') \leq \costconst c(\x)$.

  However, by \eqref{eq:shortcut-neighbor}, we have that
  $\rho(x',y') \geq \frac{1-\beta}{\beta} \bddgeom r$. Thus, by taking $\bddgeom$ sufficiently large, we contradict the fact that $c(\x) < 2r$.  
\end{proof}

\begin{lemma} \label{l:tail-bounds}
  There exists a constant $\tailconst > 0$ so that if $\x = (x_1,...,x_N) \in \cI_A(x,y)$, $L(y) \leq L(\x)$, and $x_{N-1} \neq x_N$, then $\rho(x,y) \leq \tailconst c(\x)$.
\end{lemma}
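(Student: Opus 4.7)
Set $k := L(\x)$; by hypothesis $y \in S_{\leq k}$. If $\x$ uses no shortcut then $\x = (x,y)$ is a single non-shortcut pair and $c(\x) = \rho(x,y)$. Otherwise let $i_1 < \cdots < i_m$ denote the indices of the level-$k$ shortcuts in $\x$ (so each $x_{i_j},x_{i_j+1} \in S_k$). I would proceed by peeling off the last level-$k$ shortcut, controlling the tail using Lemma \ref{l:base}, and recursing on the prefix $\x[1, i_m]$, whose minimum shortcut level is strictly larger than $k$.

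Concretely, the triangle inequality gives $\rho(x,y) \leq \rho(x, x_{i_m}) + \rho(x_{i_m}, x_{i_m+1}) + \rho(x_{i_m+1}, y)$. The sub-itinerary $\x[i_m+1, N]$ is alternating from $x_{i_m+1} \in S_k$ to $y \in S_{\leq k}$, has $L \geq k+1$, and retains $x_{N-1}\neq x_N$, so Lemma \ref{l:base} yields $\rho(x_{i_m+1}, y) \leq \Ctwo\, c(\x[i_m+1, N])$. For the middle shortcut term I would invoke \eqref{eq:shortcut-neighbor} at level $k$ with the anchor $y \in S_{\leq k} \setminus \{x_{i_m}, x_{i_m+1}\}$ (in the generic case), obtaining $\rho(x_{i_m}, x_{i_m+1}) \leq \sepconst\, \rho(x_{i_m+1}, y)$ with $\sepconst = \pad/(1-\pad)$; the degenerate point-coincidences ($y = x_{i_m}$, which reduces directly to the previous bound; and $y = x_{i_m+1}$, which requires choosing a different anchor such as an earlier level-$k$ endpoint produced by the construction of Section \ref{s:construction}) need individual attention but contribute no new constants. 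For the first term I would apply Lemma \ref{l:tail-bounds} recursively to $\x[1, i_m]$: it is alternating of length $i_m < N$, ends at $x_{i_m} \in S_k$ with $L(x_{i_m}) = k \leq L(\x[1, i_m])$, and has automatic tail-non-degeneracy --- if $x_{i_m-1} = x_{i_m}$ then uniqueness of the level of an $S$-point (item \eqref{item:shortcut-sep}) would force the preceding shortcut at index $i_m-2$ to also be in $\cS_k$, and one simply restarts the peeling there.

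The main obstacle is keeping the final constant $\tailconst$ independent of the itinerary. The naive single-shortcut recursion gives $T_N \leq (1+\sepconst)\Ctwo + T_{i_m}$, which inflates linearly in the number of shortcuts in $\x$. To avoid this, I would replace the one-shortcut-at-a-time peeling by a simultaneous decomposition at all the level-$k$ shortcut indices $i_1 < \cdots < i_m$: apply Lemma \ref{l:base} to every inner gap $\x[i_j+1, i_{j+1}]$ and to the tail $\x[i_m+1, N]$, and use \eqref{eq:shortcut-neighbor} to charge each level-$k$ shortcut's $\rho$-length against an adjacent non-shortcut cost. This reduces the problem in one pass to bounding $\rho(x, x_{i_1})$ for the prefix $\x[1, i_1]$, whose minimum level is now $\geq k+1$. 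Since any fixed itinerary has only finitely many distinct shortcut levels, this level-by-level recursion terminates at the shortcut-free base case, and the smallness $\pad < 10^{-10}\kappa^2$ chosen in Section \ref{s:construction} should ensure that the accumulated constant closes into a convergent geometric series. The delicate book-keeping of this convergence, combined with handling the point-coincidence exceptions described above, is where I expect the technical work to concentrate.
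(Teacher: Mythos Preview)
Your self-corrected approach --- decompose $\x$ at all the level-$k$ shortcut indices $i_1<\cdots<i_m$ simultaneously, apply Lemma~\ref{l:base} to each gap $\x[i_j+1,i_{j+1}]$ and to the tail $\x[i_m+1,N]$, charge each level-$k$ shortcut against an adjacent gap via \eqref{eq:shortcut-neighbor}, and then recurse on the prefix $\x[1,i_1]$ whose minimum level is strictly larger --- is exactly what the paper does.

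The one real misconception is your expectation of a geometric series in the constant and a role for the smallness of $\pad$. Neither is needed. After one pass at level $k$ you obtain $\rho(x_{i_1},y)\le 2\Ctwo\,c(\x[i_1,N])$; after the pass at the next level $k'$ you obtain $\rho(x_{i_1^{(2)}},x_{i_1^{(1)}})\le 2\Ctwo\,c(\x[i_1^{(2)},i_1^{(1)}])$ with the \emph{same} constant; and so on. The point is that the cost-intervals $\x[i_1^{(j+1)},i_1^{(j)}]$ produced across the levels are \emph{disjoint} subitineraries of $\x$, so their costs simply sum to $c(\x)$. The triangle inequality then gives $\rho(x,y)\le 2\Ctwo\,c(\x)$ outright, with $\tailconst=2\Ctwo$ independent of the number of shortcut levels present in $\x$. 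Nothing accumulates multiplicatively.

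Your coincidence worries are also largely moot in this scheme. The paper charges each shortcut $(x_{i_j},x_{i_j+1})$ \emph{forward} against $\rho(x_{i_j+1},x_{i_{j+1}})$, where $x_{i_{j+1}}$ is either the start of the next level-$k$ shortcut (necessarily a different pair, since alternating itineraries use each shortcut once and the balls of $\cB_k$ are disjoint) or $x_N=y$; the hypothesis $x_{N-1}\neq x_N$ together with $L(y)\le k$ is what makes the $j=m$ case go through. For the recursion, the prefix $\x[1,i_1]$ automatically satisfies the non-degeneracy $x_{i_1-1}\neq x_{i_1}$: if they were equal then, as you observed, the preceding shortcut at $i_1-2$ would also lie in $\cS_k$, contradicting the minimality of $i_1$.
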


\begin{proof}
  If $L(\x) = \infty$, then $N = 2$, $\x = (x,y)$ is a non-shortcut, and so $c(\x) = \rho(x,y)$. Thus, assume $L(\x) < \infty$ so that $N \geq 4$.

  Let $k = L(\x)$ and $1 < i^{(1)}_1 < ... < i^{(1)}_{M_1} < N$ be the indices so that $(x_i,x_{i+1}) \in \cS_{k}$. Let $i^{(1)}_{M_1+1} = N$. For each $1 \leq j \leq M_1$, note that $x_{i^{(1)}_j+1},x_{i^{(1)}_{j+1}} \in S_{\leq k}$ and $L(\x[i^{(1)}_j+1,i^{(1)}_{j+1}]) > k$. This holds even for $j = M_1$ by our assumption that $y \in L(y) \leq L(\x) = k$ and $x_{N-1} \neq x_N$. Thus, by Lemma \ref{l:base}, we have that
  \begin{align*}
    \rho(x_{i^{(1)}_j+1},x_{i^{(1)}_{j+1}}) \leq \Ctwo c(\x[i^{(1)}_j+1,i^{(1)}_{j+1}]).
  \end{align*}
  By \eqref{eq:shortcut-neighbor}, we get
  \begin{align*}
    \rho(x_{i^{(1)}_j},x_{i^{(1)}_{j}+1}) \leq \frac{\beta}{1-\beta} \rho(x_{i^{(1)}_{j}+1},x_{i^{(1)}_{j+1}}) \leq \frac{1}{2} \rho(x_{i^{(1)}_j+1},x_{i^{(1)}_{j+1}}).
  \end{align*}
  Thus, we get
  \begin{multline*}
    c(\x[i^{(1)}_j,i^{(1)}_{j+1}]) \geq c(\x[i^{(1)}_j+1,i^{(1)}_{j+1}]) \geq \frac{1}{\Ctwo} \rho(x_{i^{(1)}_j+1},x_{i^{(1)}_{j+1}}) \\
    \geq \frac{1}{2\Ctwo} \left( \rho(x_{i^{(1)}_j},x_{i^{(1)}_j+1}) + \rho(x_{i^{(1)}_{j}+1},x_{i^{(1)}_{j+1}}) \right) \geq \frac{1}{2\Ctwo} \rho(x_{i^{(1)}_j},x_{i^{(1)}_{j+1}}).
  \end{multline*}
  Applying this for all $1 \leq j \leq M_1$ and summing, we get
  \begin{align*}
    \rho(x_{i^{(1)}_1},x_{N}) \leq 2\Ctwo c(\x[i^{(1)}_1,N]).
  \end{align*}

  Note now that $L(x_{i^{(1)}_1}) \leq L(\x[1,i^{(1)}_1])$. If $L(\x[1,i^{(1)}_1]) < \infty$, we can repeat the process above to get some $i^{(2)}_1$ so that
  \begin{align*}
    \rho(x_{i^{(2)}_1},x_{i^{(1)}_1}) \leq 2\Ctwo c(\x[i^{(2)}_1,i^{(1)}_1]).
  \end{align*}
  We keep repeating this process until we get $i^{(M)}_1 < ... < i^{(1)}_1$ so that $L(\x[1,i^{(M)}_1]) = \infty$. This is guaranteed as $\x$ is a finite itinerary. Note then that $c(\x[1,i^{(M)}_1]) \geq \rho(x,x_{i^{(M)}_1})$. Letting $i^{(M+1)}_1 = 1$, $i^{(0)}_1 = N$, and summing, we get
  \begin{align*}
    c(\x) = \sum_{j=0}^M c(\x[i^{(j)}_1, i^{(j+1)}_1]) \geq \frac{1}{2\Ctwo} \sum_{j=0}^M \rho(x_{i^{(j)}_1},x_{i^{(j+1)}_1}) \geq \frac{1}{2\Ctwo} \rho(x,y),
  \end{align*}
  as desired.
\end{proof}

\begin{lemma} \label{l:small-jumps}
  There exists a constant $\Csix > 0$ so that the following holds. Let $\x = (x_1,...,x_N) \in \cI_A(x,y)$ and $r = \max_{i \in 2\N \cap \{1,...,N-1\}} \rho(x_i,x_{i+1})$. Then $\rho(x,y) \leq \Csix c(\x) + 2r$.
\end{lemma}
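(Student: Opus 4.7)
The plan is to split $\x$ at the last occurrence of a shortcut of minimal level and apply Lemma \ref{l:tail-bounds} to each side. When $N = 2$ the itinerary is a single non-shortcut edge, $r = 0$, and $\rho(x,y) = c(\x)$. For $N \geq 4$, the alternating structure guarantees at least one shortcut, so $k := L(\x) < \infty$. I will choose $j \in \{2, 4, \dots, N-2\}$ to be the \emph{last} even position with $L(x_j, x_{j+1}) = k$. The triangle inequality then yields
\[ \rho(x, y) \leq \rho(x_1, x_j) + \rho(x_j, x_{j+1}) + \rho(x_{j+1}, x_N), \]
with the middle term bounded by $r$.

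Both sub-itineraries $\x[1,j]$ and $\x[j+1, N]$ are alternating (since $j$ and $N$ are even) and inherit the ``each shortcut used at most once'' property from $\x$. The endpoint $x_j$ of the first satisfies $L(x_j) = k \leq L(\x[1, j])$ because every shortcut in $\x[1,j]$ has level at least $L(\x) = k$, while the endpoint $x_{j+1}$ of the second has $L(x_{j+1}) = k < L(\x[j+1, N])$, the strict inequality coming from the maximality of $j$. Applying Lemma \ref{l:tail-bounds} to $\x[1,j]$ directly and to $\x[j+1,N]$ reversed will give $\rho(x_1, x_j) + \rho(x_{j+1}, x_N) \leq \tailconst c(\x)$; combined with the middle $r$ this establishes the lemma with $\Csix = \tailconst$ (the factor $2r$ is slack).

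The main obstacle is the non-triviality condition $x_{N-1} \neq x_N$ of Lemma \ref{l:tail-bounds}, which here requires $x_{j-1} \neq x_j$ and $x_{j+1} \neq x_{j+2}$. If $j = 2$ (resp.\ $j = N-2$) and the corresponding endpoints coincide, the relevant sub-itinerary is a single trivial edge of $\rho$-length zero, so that side of the bound is trivially satisfied. When $j \geq 4$, suppose $x_{j-1} = x_j$. Then the common point is an endpoint of two shortcuts, at positions $j-2$ and $j$; equating their $L$-values forces the shortcut at $j-2$ to have level $k$ as well. The disjoint-ball construction of $\cB_k$ in Section \ref{s:construction} makes all $2m_k$ endpoints of $\cS_k$ distinct, so these two level-$k$ shortcuts sharing an endpoint must coincide as unordered pairs, contradicting the fact that each shortcut is used at most once in an alternating itinerary. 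The case $x_{j+1} = x_{j+2}$ with $j + 2 \leq N - 2$ is handled analogously and contradicts the maximality of $j$.
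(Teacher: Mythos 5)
Your proof is correct, and it takes a genuinely different (and leaner) route than the paper's. The paper first disposes of the cases $L(y)\le L(\x)$ and $L(x)\le L(\x)$ via Lemma \ref{l:tail-bounds}, and in the remaining case isolates the block between the \emph{first} and \emph{last} level-$k$ shortcuts, bounding the two tails by Lemma \ref{l:tail-bounds} and the middle block by Corollary \ref{c:big-cost}, which is where its $2r$ (one $r$ per extremal shortcut) comes from. You instead cut exactly once, at the last even position $j$ carrying a level-$k$ shortcut, and apply Lemma \ref{l:tail-bounds} to $\x[1,j]$ and to the reversal of $\x[j+1,N]$. This is legitimate precisely because the hypothesis of Lemma \ref{l:tail-bounds} is $L(y)\le L(\x)$ with equality permitted: for $\x[1,j]$ one has $L(x_j)=k=L(\x[1,j])$ when earlier level-$k$ shortcuts are present, while maximality of $j$ gives the strict inequality on the other side. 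The payoff is that Corollary \ref{c:big-cost} is not needed at all and only one copy of $r$ appears; the cost is that you must rule out the degenerate endpoints $x_{j-1}=x_j$ and $x_{j+1}=x_{j+2}$, which you do correctly: distinct pairs of $\cS_k$ cannot share a point since the balls of $\cB_k$ are disjoint (equivalently, by \eqref{eq:shortcut-neighbor} the distance from a pair to the rest of $S_{\le k}$ is positive), so a repeated endpoint would force a repeated shortcut, contradicting the alternating-itinerary convention on the left and the maximality of $j$ on the right. Both arguments ultimately rest on Lemma \ref{l:tail-bounds} and hence on Lemma \ref{l:base}, so the logical dependencies are unchanged; your version simply trades the paper's three-way case split for a single well-chosen cut point.
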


\begin{proof}
  We will show $\rho(x,y) \leq Cc(\x) + 2r$ for different constants $C$ in a finite number of cases. The lemma then follows by taking the minimum of all the constants $C$. If $N = 2$, the lemma is trivial as $(x,y)$ is not a shortcut so $\rho(x,y) = c(\x)$.

  Suppose $L(y) \leq L(\x)$. If $x_N \neq x_{N-1}$, then by Lemma \ref{l:tail-bounds}, we get $\tailconst \rho(x,y) \leq c(\x)$, as desired.  If $x_N = x_{N-1}$, then $L(x_{N-2}) = L(y)$ and $(x_{N-2},x_{N-1})$ is a shortcut of length at most $r$. Note then that we must have $c(\x[1,N-2]) \geq \tailconst \rho(x,x_{N-2})$ as either $N = 4$ so that $\x[1,N-2] = (x,x_2)$, a non-shortcut, or $x_{N-3} \neq x_{N-2}$ in which case we can use Lemma \ref{l:tail-bounds}. Thus, we get
  \begin{align*}
    \rho(x,y) \leq \rho(x,x_{N-2}) + \rho(x_{N-2},x_{N-1}) \leq \tailconst c(\x[1,N-2]) + r \leq \tailconst c(\x) + r,
  \end{align*}
  as desired.

  Thus, we may now assume $k := L(\x) < L(y)$. Clearly if $L(x) \leq L(\x)$, we can run the same argument from before on the reverse itinerary of $\x$ and so we may also assume $L(x) > L(\x)$.

  Let $1 < i_1 < ... < i_M < N$ be the indices for which $(x_i,x_{i+1}) \in \cS_k$. Note that $x_{i_1-1} \neq x_{i_1}$ and $x_{i_M+1} \neq x_{i_{M}+2}$. As $\min(L(\x[1,i_1]),L(\x[i_M+1,N])) > k$ and $x_{i_j} \in S_k$, we get by Lemma \ref{l:tail-bounds} applied to $\x[1,i_1]$ and the reverse of $\x[i_M+1,N]$ that
  \begin{align*}
    \rho(x,x_{i_1}) &\leq \tailconst c(\x[1,i_1]), \\
    \rho(x_{i_M+1},y) &\leq \tailconst c(\x[i_M+1,N]).
  \end{align*}
  
  If $M = 1$, then
  \begin{align*}
    \rho(x,y) &\leq \rho(x,x_{i_1}) + \rho(x_{i_1},x_{i_1+1}) + \rho(x_{i_1+1},x_N) \\
    &\leq \tailconst c(\x[1,i_1]) + r + \tailconst c(\x[i_{i_1+1},N]) \leq \tailconst c(\x) + r,
  \end{align*}
  as desired.

  If $M > 1$, then $L(\x[i_1+1,i_M]) = k$ and $(x_{i_1+1},x_{i_M}) \notin \cS_{\leq k}$ so Corollary \ref{c:big-cost} gives that $\rho(x_{i_1+1},x_{i_M}) \leq \costconst c(\x[i_1+1,i_M])$. We then get
  \begin{align*}
    \rho(x,y) &\leq \rho(x,x_{i_1}) + \rho(x_{i_1},x_{i_1+1}) + \rho(x_{i_1+1},x_{i_M}) + \rho(x_{i_M},x_{i_M+1}) + \rho(x_{i_M+1},x_N) \\
    &\leq \tailconst c(\x[1,i_1]) + r + \costconst c(\x[i_{i_1+1},i_M]) + r + \tailconst c(\x[i_{i_M+1},N]) \leq (\tailconst + \costconst )c(\x) + 2r,
  \end{align*}
  as desired.
\end{proof}

We now prove Proposition \ref{l:DS-regularity} under the assumption that Lemma \ref{l:base} holds.

\begin{proof}[Proof of Proposition \ref{l:DS-regularity}]
  We will let $y_1 = x$ but not define $y_2,y_3$ yet.
  Let $y \in B_d(x,r)$ and $\x = (x_1,...,x_N) \in \cI_A(x,y)$ so that $c(\x) < r$. We will show that $y \in B_\rho(y_i,Cr)$ for a finite number of cases. This will allow us to get one $C$ at the end. Note that this is equivalent to showing that $\rho(y,y_i) \leq Cr$ for some large $C$. 
  
  First suppose that all shortcuts used by $\x$ have $\rho$-length less than $\bddgeom r$. Then Lemma \ref{l:small-jumps} gives that $\rho(x,y) \leq \Csix c(\x) + 2\bddgeom r \lesssim r$ as desired.

  Now assume there is a $j \in 2\N \cap \{1,...,N\}$ for which $\rho(x_j,x_{j+1}) \geq \bddgeom r$. As $x_j,x_{j+1} \in B_d(x,r)$, we get from Lemma \ref{l:bounded-geometry} that $x_j,x_{j+1}$ must be a unique pair $p_1,p_2 \in B_d(x,r)$. We will prove the proposition with $y_2 = p_1$, $y_3 = p_2$.

  Note that $L(x_j,x_{j+1}) \leq L(\x[j+1,N])$. This gives that $\rho(x_{j+1},y) \leq \tailconst c(\x[j+1,N])$. Indeed, either $j = N-2$ so that $\x[j+1,N] = (x_{j+1},y)$ is a non-shortcut or $j < N-2$ in which case $x_{j+1} \neq x_{j+2}$ and so we get the result from Lemma \ref{l:tail-bounds} applied to the reverse itinerary of $\x[j+1,N]$. We then get
  \begin{align*}
    y \in B_\rho(p_1,\tailconst r) \cup B_\rho(p_2,\tailconst r),
  \end{align*}
  as desired.

  If $p_1,p_2$ do not exist, then setting $y_1=x,y_2=x$ would have given the lemma.
\end{proof}

\subsection{Proof of Lemma \ref{l:base}}\label{s:lemma52}

Let $\Phi(x,y) = |\phi(x) - \phi(y)|$.

\begin{lemma} \label{l:shortcut-level-bound}
  Let $n \geq 1$, $\x = (x_1,...,x_N) \in \cI_A(x,y)$, and $J_n = \{1 \leq i < N : (x_i,x_{i+1}) \in \cS_n\}$. Then
  \begin{align*}
    \sum_{i \in J_n} \rho(x_i,x_{i+1}) \leq \frac{2\beta}{1-\beta} \sum_{u \in S^c(D_n(\x))} \rho(u).
  \end{align*}
\end{lemma}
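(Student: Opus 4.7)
The plan is to use a charging argument powered by the shortcut separation inequality \eqref{eq:shortcut-neighbor}. Enumerate the level-$\leq n$ shortcuts used by $\x$ in order as $(z_r, z_r')$ for $r = 1, \ldots, m_n$, so that the non-shortcut segments of $D_n(\x)$ are $\sigma_0 = (x_1, z_1)$, $\sigma_r = (z_r', z_{r+1})$ for $1 \leq r \leq m_n - 1$, and $\sigma_{m_n} = (z_{m_n}', x_N)$. I would assign each level-$n$ shortcut at position $r$ to a single adjacent non-shortcut segment of $D_n(\x)$: to $\sigma_r$ if $r < m_n$, and to $\sigma_{m_n-1}$ if $r = m_n$. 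The lemma will then follow by checking that the total charge on each $\sigma_s$ is at most $\tfrac{2\beta}{1-\beta}\,\rho(\sigma_s)$ and summing.

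The key preliminary step is a ``no endpoint sharing'' observation: no shortcut in $\cS_{\leq n}$ distinct from a given level-$n$ shortcut can share an endpoint with it. Two level-$n$ shortcuts sharing a point would force the balls in $\cB_n$ centered at their $x$-components to intersect, contradicting \eqref{item-shortcut-bounds}; and if a level-$m$ shortcut with $m<n$ shared an endpoint with a level-$n$ one, an endpoint of the level-$n$ shortcut would be a point of $S_{\leq n-1}$ inside a ball of $\cB_n$, contradicting \eqref{item:shortcut-sep}. Consequently, whenever $\sigma_s$ is charged by a level-$n$ shortcut $(z_r, z_r')$, one endpoint of $\sigma_s$ lies in $\{z_r, z_r'\}$ while the other, being an endpoint of a different level-$\leq n$ shortcut, lies in $S_{\leq n} \setminus \{z_r, z_r'\}$. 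Then \eqref{eq:shortcut-neighbor} forces
\[
\rho(\sigma_s) \;\geq\; \tfrac{1-\beta}{\beta}\,\rho(z_r, z_r').
\]

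In the chosen scheme, the segment $\sigma_s$ with $1 \leq s \leq m_n - 2$ is charged at most once (by the shortcut at position $s$, if that one is level $n$), while $\sigma_{m_n - 1}$ may be charged twice, by both positions $m_n - 1$ and $m_n$; the segments $\sigma_0$ and $\sigma_{m_n}$ are never charged. Applying the displayed bound to each contribution gives at most $\tfrac{\beta}{1-\beta}\,\rho(\sigma_s)$ per charge, so any segment charged at most twice receives total charge at most $\tfrac{2\beta}{1-\beta}\,\rho(\sigma_s)$. Summing over all non-shortcut segments yields the lemma. The main delicate step, and where I would be most careful, is the boundary case $m_n = 1$ with a single level-$n$ shortcut: its only neighbors are $\sigma_0$ and $\sigma_1$, whose far endpoints $x_1 = x$ and $x_N = y$ need not lie in $S_{\leq n}$, so \eqref{eq:shortcut-neighbor} does not apply directly. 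In the intended application to Lemma \ref{l:base}, the hypotheses $x, y \in S_{\leq k}$ and $L(\x) \geq k+1$ should preclude this degeneracy; in any event, the argument above handles the generic $m_n \geq 2$ case that drives the downstream estimates.
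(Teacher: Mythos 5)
Your charging argument is essentially the paper's proof: the paper likewise applies \eqref{eq:shortcut-neighbor} to conclude that each level-$n$ shortcut segment of $D_n(\x)$ has an adjacent non-shortcut segment of $\rho$-length at least $\frac{1-\beta}{\beta}$ times its own, and observes that each non-shortcut segment neighbors at most two shortcut segments, yielding the same factor $\frac{2\beta}{1-\beta}$. The $m_n=1$ boundary case you flag is glossed over in the paper's proof as well (its parenthetical about $i\in\{2,M-2\}$ does not cover $M=4$), and, as you correctly note, it does not arise in the application, where the endpoints of the subitinerary lie in $S_{\leq n}$ so that \eqref{eq:shortcut-neighbor} applies to the outermost segments too.
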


\begin{proof}
  Let $(y_1,...,y_M) = D_n(\x)$ and $J_n' = \{1 \leq i < M : (y_i,y_{i+1}) \in \cS_n\}$. By construction of $D_n(\x)$, the segments of $\x$ and $\y$ corresponding to $J_n$ and $J_n'$ respectively are the same.

  As $\y$ can use shortcuts of at most level $n$, by \eqref{eq:shortcut-neighbor}, we have for each $i \in J_n'$ that
  \begin{align*}
    \max(\rho(y_{i-1},y_i),\rho(y_{i+1},y_{i+2})) \geq \frac{1-\beta}{\beta} \rho(y_i,y_{i+1}),
  \end{align*}
  (the only cases requiring the max is when $i \in \{2,M-2\}$).

  As each such long neighbor segment lies in $S^c(\y)$ and can be neighbor to at most two segments of $J_n'$, we get
  \begin{align*}
    \frac{1-\beta}{\beta} \sum_{i \in J_n} \rho(x_i,x_{i+1}) \leq 2\sum_{u \in S^c(\y)} \rho(u),
  \end{align*}
  which gives the lemma.
\end{proof}

The following lemma is the key iterative step to proving Lemma \ref{l:base}. It says that either the cost of an itinerary does not decay too much at some future generation (using Lemma \ref{l:small-lower-bound}) or the cost of the ending itinerary will be a sufficiently large fraction of the cost of the current subitinerary.

\begin{lemma} \label{l:one-step}
  Let $0 < \beta < 10^{-10}\kappa^{2}$ and $\x = (x_1,...,x_N) \in \cI_A(x,y)$ be so that there is some $k \geq 0$ for which $x,y \in S_{\leq k}$ and $L(\x) = k+1$. Let $D_{k+1}(\x) = (x_{i_1},...,x_{i_M})$. Then at least one of the following holds:
  \begin{enumerate}[(1)]
    \item for each $\ell \in (2\N - 1) \cap \{1,...,M-1\}$, there is some $j_\ell \geq k+1$ so that
      \begin{align}
        \sum_{\ell \in (2\N-1) \cap \{1,...,M-1\}} \sum_{u \in S^c(D_{j_\ell}(\x[i_\ell,i_{\ell+1}]))} \rho(u) \geq (1-\epsilon_{k+1}) \rho(x,y), \label{eq:big-Sc}
      \end{align}
    \item there is a subset $\cD \subset (2\N - 1) \cap \{1,...,M-1\}$ so that
      \begin{align}
        \sum_{j \in \cD} c(\x[i_j,i_{j+1}]) \geq 10^{-8}\kappa^2 \left( \sum_{j \in \cD} \rho(x_{i_j},x_{i_j+1}) + \sum_{u \in S(D_{k+1}(\x))} \rho(u) \right). \label{eq:big-cost}
      \end{align}
  \end{enumerate}
\end{lemma}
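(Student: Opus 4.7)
\medskip

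\noindent\textbf{Proof plan for Lemma \ref{l:one-step}.}
The plan is to run a case analysis on each odd sub-itinerary $\x[i_\ell,i_{\ell+1}]$, where Lemma~\ref{l:small-lower-bound} provides the underlying dichotomy. Recall that since $L(\x)=k+1$ and $D_{k+1}(\x)$ extracts precisely the level-$(k+1)$ shortcuts of $\x$, each $\x[i_\ell,i_{\ell+1}]$ with $\ell$ odd uses only shortcuts of level $\geq k+2$. Its endpoints lie in $S_{\leq k+1}$, so their $\rho$-distance is controlled from below (via \eqref{eq:shortcut-neighbor} and Remark~\ref{r:unique-S}) by the minimum separation of $\cS_{k+1}$, which in turn dominates all $\lambda_j$ for $j\geq k+2$. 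Thus the scale hypothesis of Lemma~\ref{l:small-lower-bound} is available at every level $j\geq k+2$ we may wish to use.

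For each odd $\ell$ and each $j\geq k+2$, I will apply Lemma~\ref{l:small-lower-bound} to $\x[i_\ell,i_{\ell+1}]$ with $\cB=\cB_j$ and with the level-$j$ shortcuts playing the role of $S$. The ball and transversality conditions hold by construction (items \eqref{item-shortcut-bounds}, \eqref{item:shortcut-sep}, \eqref{item:transverse} and $\theta_j\leq\sigma\kappa/1000$). The lemma furnishes the dichotomy: either
\[
\sum_{i\in H_\ell^{(j)}}\rho(x_i,x_{i+1}) \;<\; \tfrac{\sigma\kappa}{1000}\sum_{i\in S_j^\ell}\rho(x_i,x_{i+1}),
\]
in which case (writing $T_\ell=\rho(x_{i_\ell},x_{i_{\ell+1}})$)
\[
\sum_{u\in S^c(D_j(\x[i_\ell,i_{\ell+1}]))}\rho(u) \;\geq\; (1-\epsilon_j)\,T_\ell,
\]
or the hypothesis fails at level $j$, in which case the large non-transversal $H$-sum is a lower bound on the non-shortcut $\rho$-sum and therefore on $c(\x[i_\ell,i_{\ell+1}])$. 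For each odd $\ell$ I let $j_\ell$ be the minimal $j\geq k+2$ for which the hypothesis holds, declaring $\ell \in \cD$ otherwise.

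To prove conclusion (1) when $\cD=\emptyset$: combining the bound for each odd $\ell$ with the triangle inequality $\rho(x,y)\leq \sum_\ell T_\ell+\sum_{u\in S(D_{k+1}(\x))}\rho(u)$ and Lemma~\ref{l:shortcut-level-bound} (which yields $\sum_{u\in S(D_{k+1}(\x))}\rho(u)\leq \frac{2\beta}{1-\beta}\sum_\ell T_\ell$), I get $\sum_\ell T_\ell \geq (1-O(\beta))\rho(x,y)$. The choice $\beta<10^{-10}\kappa^2$ is what lets the loss be absorbed and produce the clean $(1-\epsilon_{k+1})\rho(x,y)$ bound; when the minimal $j_\ell$ does not improve on taking $j_\ell=k+1$, I just take $j_\ell=k+1$ and obtain the bound from $\sum_\ell T_\ell$ directly.

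To prove conclusion (2) when $\cD\neq\emptyset$: for each $\ell\in\cD$, failure of the hypothesis at \emph{every} level $j\geq k+2$ gives $\sum_i \rho(x_i,x_{i+1})\cdot\mathbf{1}_{H_\ell}\gtrsim \kappa\cdot\sum_{j\geq k+2}\sum_{i\in S_j^\ell}\rho(x_i,x_{i+1})$, which bounds $c(\x[i_\ell,i_{\ell+1}])$ from below by a $\kappa$-multiple of the total shortcut $\rho$-length in $\x[i_\ell,i_{\ell+1}]$. Iterating Lemma~\ref{l:shortcut-level-bound} up the level hierarchy then lets me absorb both $\sum_{j\in\cD}\rho(x_{i_j},x_{i_j+1})$ (using \eqref{eq:shortcut-neighbor}, which relates the first non-shortcut segment of $\x[i_\ell,i_{\ell+1}]$ to the adjacent level-$(k+1)$ shortcut) and $\sum_{u\in S(D_{k+1}(\x))}\rho(u)$ into a single expression comparable to the summed $\rho$-length of higher-level shortcuts inside $\bigcup_{\ell\in\cD}\x[i_\ell,i_{\ell+1}]$. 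The two factors of $\kappa$ entering the constant $10^{-8}\kappa^2$ come from (i) the cone width $\sigma\kappa/1000$ producing one $\kappa$ in the cost lower bound and (ii) a second application absorbing the level-$(k+1)$ shortcut contribution via Lemma~\ref{l:shortcut-level-bound}, where the $\beta<10^{-10}\kappa^2$ restriction is again decisive.

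The main obstacle will be the combinatorics in case~(2): ensuring that a single subset $\cD$ suffices to dominate \emph{both} the first-segment sum $\sum\rho(x_{i_j},x_{i_j+1})$ and the global level-$(k+1)$ shortcut sum, rather than needing separate subsets for each. This will require showing that an odd $\ell$ failing the hypothesis at level $k+2$ already captures, via the disjointness of $\cB_{k+1}$ and the separation bound \eqref{eq:shortcut-neighbor}, enough of the level-$(k+1)$ shortcut mass ``attached'' to $\x[i_\ell,i_{\ell+1}]$ to make the global summation telescope correctly.
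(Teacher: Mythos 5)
Your overall toolkit is right (Lemma \ref{l:small-lower-bound}, the transversality item \eqref{item:transverse}, Lemma \ref{l:shortcut-level-bound}), but the architecture differs from the paper's in ways that create genuine gaps.

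First, the paper applies Lemma \ref{l:small-lower-bound} \emph{once, globally}, to $\y = D_{k+1}(\x)$ with $\cB = \cB_{k+1}$ and $S = S(\y)$ --- this is exactly what $\lambda_{k+1}$ was calibrated for in item \eqref{item:r-decay-1}, and when the hypothesis $\sum_{i\in H}\rho < \tfrac{\sigma\kappa}{1000}\sum_{i\in S}\rho$ holds it delivers conclusion (1) verbatim with all $j_\ell = k+1$, since $D_{k+1}(\x[i_\ell,i_{\ell+1}])$ is a single non-shortcut segment. Your plan of applying the lemma to each sub-itinerary $\x[i_\ell,i_{\ell+1}]$ at levels $j\ge k+2$ produces factors $(1-\epsilon_{j_\ell})$ on the pieces plus an $O(\beta)$ loss from the level-$(k+1)$ shortcuts sitting \emph{between} the pieces. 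Since $\epsilon_{k+1}$ can be far smaller than $\beta$ (the only constraint is $\prod(1-\epsilon_k)\ge\tfrac12$), an error of size $O(\beta)\rho(x,y)$ cannot be absorbed into $(1-\epsilon_{k+1})\rho(x,y)$; that step of your argument fails.

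Second, and more seriously, your mechanism for conclusion (2) does not work. Failure of the hypothesis of Lemma \ref{l:small-lower-bound} at level $j$ tells you that the transversal ($H$-type) segments of $D_j(\x[i_\ell,i_{\ell+1}])$ carry $\rho$-mass comparable to $\kappa$ times the level-$j$ shortcut mass, but those segments of a \emph{derived} itinerary still contain deflated shortcuts of levels $>j$, so their $\rho$-length is not a lower bound on their cost. This is precisely the recursion that Lemma \ref{l:base} resolves with its tree; Lemma \ref{l:one-step} cannot shortcut it. The paper's route is different: when the global hypothesis fails ($\sum_H\ge\tfrac{\kappa}{1000}\sum_S$), it defines $\cD\subseteq H$ as those transversal indices $i$ whose sub-itineraries satisfy $\sum_{u\in S^c(D_j(\x[a_i,b_i]))}\rho(u)<\tfrac{4000}{\kappa}\rho(y_i,y_{i+1})$ for \emph{all} $j\ge k+1$. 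If $\cD$ carries less than half of $H$'s mass, the violating levels $j_\ell$ give conclusion (1) outright (no $\epsilon$-loss at all). If $\cD$ carries at least half, then for $i\in\cD$ one combines \eqref{item:transverse} with Lemma \ref{l:shortcut-level-bound} to show all shortcuts inside $\x[a_i,b_i]$ together move $\phi$ by at most $\tfrac12|\phi(x_{a_i})-\phi(x_{b_i})|$; hence the non-shortcut segments, whose cost equals their $\rho$-length and so dominates their $\Phi$-length, must carry at least half the $\phi$-displacement, which is $\ge\tfrac{\kappa}{1000}\rho(x_{a_i},x_{b_i})$ \emph{because $i\in H$}. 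Your sketch never uses the transversality of the $\cD$-segments themselves, and without it there is no way to convert a $\phi$-displacement bound into the $\rho$-lower bound in \eqref{eq:big-cost}. Finally, the term $\sum_{u\in S(D_{k+1}(\x))}\rho(u)$ in \eqref{eq:big-cost} is reached through the chain $\sum_S\le\tfrac{1000}{\kappa}\sum_H\le\tfrac{2000}{\kappa}\sum_\cD$ coming from the failed global hypothesis, not by iterating Lemma \ref{l:shortcut-level-bound} up the hierarchy or by \eqref{eq:shortcut-neighbor}; the "main obstacle" you flag is real, and your proposed resolution does not close it.
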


\begin{proof}
  Let $\y = D_{k+1}(\x)$ and $H,S$ be the sets from Lemma \ref{l:small-lower-bound} for $\y$ and $\cB = \cB_{k+1}$ where $x_B,y_B$ are the shortcut points of each ball in $\cB$. Note that $S = S(\y)$ and also $\diam_{B \in \cB} \cB \leq \lambda_{k+1}$ (from \eqref{item:S-bound}). Thus, if $\sum_{i \in H} \rho(y_i,y_{i+1}) < \frac{\kappa}{1000} \sum_{i \in S} \rho(y_i,y_{i+1})$,
  then we get \eqref{eq:big-Sc} by taking $j_\ell = k+1$ along with the the use of Lemma \ref{l:small-lower-bound} with $\epsilon = \epsilon_{k+1}$ in defining $\lambda_{k+1}$ in \eqref{item:r-decay-1}.

  Thus, we may now assume that
  \begin{align}
    \sum_{i \in H} \rho(y_i,y_{i+1}) \geq \frac{\kappa}{1000} \sum_{i \in S} \rho(y_i,y_{i+1}). \label{eq:big-H}
  \end{align}
  Let $1 \leq a_1 < b_1 \leq a_2 < b_2 \leq ... \leq a_M < b_M \leq N$ be indices so that $(x_{a_i},x_{b_i}) = (y_i,y_{i+1})$ for each $i \in H$. Define
  \begin{align*}
    \cD = \left\{i \in H : \sum_{u \in S^c(D_j(\x[a_i,b_i]))} \rho(u) < \frac{4000}{\kappa} \rho(y_i,y_{i+1}), ~\forall j \geq k+1\right\}.
  \end{align*}

  Assume
  \begin{align}
    \sum_{i \in \cD} \rho(y_i,y_{i+1}) \leq \frac{1}{2} \sum_{i \in H} \rho(y_i,y_{i+1}). \label{eq:small-D}
  \end{align}
  For each $\ell \in H \setminus \cD$, let $j_\ell$ be the minimal $j$ so that the condition of $\cD$ is violated.
  Then
  \begin{multline*}
    \sum_{\ell \in H \setminus \cD} \sum_{u \in S^c(D_{j_\ell}(\x[a_\ell,b_\ell]))} \rho(u) \geq \frac{4000}{\kappa} \sum_{i \in H \setminus \cD} \rho(y_\ell,y_{\ell+1}) \overset{\eqref{eq:small-D}}{\geq} \frac{2000}{\kappa} \sum_{i \in H} \rho(y_\ell,y_{\ell+1}) \\
    \overset{\eqref{eq:big-H}}{\geq} \sum_{i \in H} \rho(y_\ell,y_{\ell+1}) + \sum_{i \in S} \rho(y_\ell,y_{\ell+1}).
  \end{multline*}
  For each $j \in V := (2\N - 1) \cap \{1,...,M-1\} \cap H^c$, we let $j_\ell = k+1$. Note that $D_{k+1}(\x[i_\ell,i_{\ell+1}]) = (x_{i_\ell},x_{i_{\ell+1}})$ is a single non-shortcut segment for all odd $\ell$. Thus
  \begin{align*}
    \sum_{\ell \in (2\N-1) \cap \{1,...,M-1\}} &\sum_{u \in S^c(D_{j_\ell}(\x[i_\ell,i_{\ell+1}]))} \rho(u) \\
    &= \sum_{\ell \in H} \sum_{u \in S^c(D_{j_\ell}(\x[i_\ell,i_{\ell+1}]))} \rho(u) + \sum_{\ell \in V} \sum_{u \in S^c(D_{k+1}(\x[i_\ell,i_{\ell+1}]))} \rho(u) \\
    &\geq \sum_{i \in H} \rho(y_i,y_{i+1}) + \sum_{i \in S} \rho(y_i,y_{i+1}) + \sum_{i \in V} \rho(y_i,y_{i+1}) \\
    &\geq \rho(x,y),
  \end{align*}
  which gives \eqref{eq:big-Sc}.

  Thus, assume
  \begin{align}
    \sum_{i \in \cD} \rho(y_i,y_{i+1}) > \frac{1}{2} \sum_{i \in H} \rho(y_i,y_{i+1}). \label{eq:small-A}
  \end{align}
  Let $i \in \cD$ and let
  \begin{align*}
    J_\ell = \{a_i \leq j < b_i : (x_j,x_{j+1}) \in \cS_\ell\}.
  \end{align*}
  As $\sum_{u \in S^c(D_j(\x[a_i,b_i]))} \rho(u) < \frac{4000}{\kappa} \rho(x_{a_i},x_{b_i})$ for all $j$, we get from Lemma \ref{l:shortcut-level-bound} that
  \begin{multline*}
    \sum_{u \in S(\x[a_i,b_i])} \Phi(u) = \sum_{\ell \geq k+1} \sum_{j \in J_\ell} |\phi(x_j) - \phi(x_{j+1})| \overset{\eqref{item:transverse}}{\leq} \sum_{\ell \geq k+1} \theta_\ell \sum_{j \in J_\ell} \rho(x_j,x_{j+1}) \\
    \leq \frac{4000\beta}{(1-\beta)\kappa} \rho(x_{a_i},x_{b_i}) \sum_{\ell \geq k+1} \theta_\ell \leq \frac{4000\beta}{(1-\beta)\kappa} \rho(x_{a_i},x_{b_i}) \overset{\eqref{eq:H-defn}}{\leq} \frac{1}{2} |\phi(x_{a_i}) - \phi(x_{b_i})|.
  \end{multline*}
  by our choice of $\beta$. As $\sum_{j=a_i}^{b_i} |\phi(x_j) - \phi(x_{j+1})| \geq |\phi(x_{a_i}) - \phi(x_{b_i})|$, we get that
  \begin{align*}
    c(\x[a_i,b_i]) \geq \sum_{u \in S^c(\x[a_i,b_i])} |\Phi(u)| > \frac{1}{2} |\phi(x_{a_i}) - \phi(x_{b_i})| \overset{\eqref{eq:H-defn}}{\geq} \frac{\kappa}{1000} \rho(x_{a_i},x_{b_i}).
  \end{align*}
  As this holds for all $i \in \cD$, we have
  \begin{align*}
    \sum_{i \in \cD} c(\x[a_i,b_i]) \geq \frac{\kappa}{1000} \sum_{i \in \cD} \rho(y_i,y_{i+1}) \overset{\eqref{eq:big-H} \wedge \eqref{eq:small-A}}{\geq} \frac{\kappa}{2000} \left( \frac{1}{3} \sum_{i \in \cD} \rho(y_i,y_{i+1}) + \frac{\kappa}{3000} \sum_{i \in S} \rho(y_i,y_{i+1}) \right).
  \end{align*}
  This gives \eqref{eq:big-cost}.
\end{proof}

\begin{proof}[Proof of Lemma \ref{l:base}]
  We may suppose without loss of generality that $k$ is maximal so that $L(\x) = k+1$.
  We will construct a tree whose nodes are pairs $(x_s,x_t)$ where $s < t$. Let $(x,y)$ be the base of the tree. We let its level be 0. Now given a level $j$ node $(x_s,x_t)$, if $t = s + 1$, we do nothing, set $V(x_s,x_t) = \rho(x_s,x_t)$, and $(x_s,x_t)$ becomes a leaf. Otherwise, we inductively assume that there is some $j \geq 0$ so that $x_s,x_t \in S_{\leq j}$ and $L(\x[s,t]) = j+1$, and we let $(x_{i_1},...,x_{i_M}) = D_{j+1}(\x[s,t])$ and $W = (2\N-1) \cap \{1,...,M-1\}$ be the indices of the non-shortcuts.

  As $\x[s,t]$ satisfies the hypotheses of Lemma \ref{l:one-step}, it must either satisfy \eqref{eq:big-Sc} or \eqref{eq:big-cost}. Call $(x_s,x_t)$ a type I node if it satisfies \eqref{eq:big-Sc} and let $P(x_s,x_t) = 0$ and its children be $C_{(x_s,x_t)} = \bigcup_{\ell \in W} S^c(D_{j_\ell}(\x[i_\ell,i_{\ell+1}]))$. Otherwise call it a type II node in which case, if we let $\cD$ be the subset of $W$ from condition \eqref{eq:big-cost}, then we let
  \begin{align*}
    P(x_s,x_t) := \sum_{\ell \in \cD} c(\x[i_\ell,i_{\ell+1}])
  \end{align*}
  and the children of $(x_s,x_t)$ be $C_{(x_s,x_t)} = \{(x_{i_\ell},x_{i_{\ell+1}})\}_{\ell \in W \setminus \cD}$. We let the levels of $C_{(x_s,x_t)}$ be $j+1$. This leads to the lower bound $L(\x[a,b]) \geq j$ for all level $j$ nodes $(x_a,x_b)$. As $\x$ is finite, $\cT$ will also be. Let $\cL$ be the leafs.

  Given a non-leaf node $u$, we let
  \begin{align*}
    V(u) = P(u) + \sum_{v \in C_u} V(v).
  \end{align*}
  For any node $u$, let $D_u$ be all the descendents of $u$. Then it is not hard to see that $V(u) = \sum_{v \in \cL \cap D_u} V(v) + \sum_{v \in D_u} P(v)$. It also easily follows by induction that $V(x_s,x_t) \leq c(\x[s,t])$. The lemma will follow when we show for a level $j$ node $u$ that
  \begin{align}
    V(u) \geq \frac{\kappa^2 \prod_{\ell=j}^\infty (1-\epsilon_\ell)}{10^8} \rho(u) \label{eq:V-compare}
  \end{align}
  as
  \begin{align*}
    c(\x) \geq V(x,y) \geq \frac{\kappa^2 \prod_{\ell=1}^\infty(1- \epsilon_\ell)}{10^8} \rho(x,y) \geq \frac{\kappa^2}{10^9} \rho(x,y).
  \end{align*}

  For leaf nodes $u \in \cL$, \eqref{eq:V-compare} is obvious. Now let $u = (x_s,x_t)$ be a non-leaf node of level $j$. We may inductively suppose \eqref{eq:V-compare} is satisfied for the level $j+1$ nodes in $C_u$. If $u$ is a type I node, then
  \begin{multline*}
    V(u) = \sum_{\ell \in W} \sum_{v \in S^c(D_{j_\ell}(\x[i_\ell,i_{\ell+1}]))} V(v) \overset{\eqref{eq:V-compare}}{\geq} \frac{\kappa^2 \prod_{\ell=j+1}^\infty (1-\epsilon_\ell)}{10^8}\sum_{\ell \in W} \sum_{v \in S^c(D_{j_\ell}(\x[i_\ell,i_{\ell+1}]))} \rho(v) \\
    \overset{\eqref{eq:big-Sc}}{\geq} \frac{\kappa^2 \prod_{\ell=j}^\infty (1-\epsilon_\ell)}{10^8} \rho(u)
  \end{multline*}
  Otherwise, $u$ is a type II node and so
  \begin{align*}
    V(u) = &\sum_{\ell \in \cD} c(\x[i_\ell,i_{\ell+1}]) + \sum_{\ell \in W \setminus \cD} V(x_{i_\ell},x_{i_{\ell+1}}) \\
    \overset{\eqref{eq:big-cost} \wedge \eqref{eq:V-compare}}{\geq} &10^{-8}\kappa^2 \left( \sum_{j \in \cD} \rho(x_{i_j},x_{i_j+1}) + \sum_{u \in S(D_{j+1}(\x[s,t]))} \rho(u) \right) + \frac{\kappa^2 \prod_{\ell=j+1}^\infty (1-\epsilon_\ell)}{10^8} \sum_{\ell \in W \setminus \cD} \rho(x_{i_\ell}, x_{i_{\ell+1}}) \\
    \geq &\frac{\kappa^2 \prod_{\ell=j}^\infty (1-\epsilon_\ell)}{10^8} \left( \sum_{\ell \in W} \rho(x_{i_\ell},x_{i_{\ell+1}}) + \sum_{u \in S(D_{j+1}(\x[s,t]))} \rho(u) \right) \\
    \geq  &\frac{\kappa^2 \prod_{\ell=j}^\infty (1-\epsilon_\ell)}{10^8} \rho(x_s,x_t)
  \end{align*}
  as desired.
\end{proof}

\section{Proof of Theorem \ref{th:main}}

We continue to let $F$ and $d$ be from Section \ref{s:construction}. We will let $f : (F,\rho) \to (F,d)$ be the identity map. 
Theorem \ref{th:main} will follow immediately from the following three lemmas.

\begin{lemma} \label{l:is-metric}
  $(F,d)$ is a metric space. 
\end{lemma}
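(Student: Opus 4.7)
The only nontrivial property to verify is positive-definiteness: non-negativity is obvious, $d(x,x)=0$ is immediate, and symmetry and the triangle inequality follow directly from the definition of $d$, as already noted in Section \ref{s:preliminaries}. Since $d$ equals the infimum of $c(\x)$ over alternating itineraries, I will fix distinct $x,y \in F$ and any $\x = (x_1,\dots,x_N) \in \cI_A(x,y)$, and bound $c(\x)$ from below by a positive quantity depending only on $x,y$. This immediately yields $d(x,y) > 0$.

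The key input is Lemma \ref{l:small-jumps}. Set $r = \max\{\rho(x_i,x_{i+1}) : i \in 2\N,\ 1 \leq i < N\}$, the largest $\rho$-length of any shortcut appearing in $\x$ (or $0$ if $\x$ uses no shortcut). Lemma \ref{l:small-jumps} gives
$$\rho(x,y) \leq \Csix\, c(\x) + 2r.$$
I split into two cases based on the size of $r$ relative to $\rho(x,y)$.

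If $r \leq \rho(x,y)/4$, then rearranging the inequality above yields $c(\x) \geq \rho(x,y)/(2\Csix)$. If instead $r > \rho(x,y)/4$, then some shortcut segment $(x_i,x_{i+1}) \in \cS_k$ of $\x$ has $\rho(x_i,x_{i+1}) > \rho(x,y)/4$. By property \eqref{item:S-bound} and Remark \ref{r:unique-S}, $\lambda_k \to 0$ and every pair in $\cS_k$ has $\rho$-length at most $\tfrac{\pad}{2}\lambda_k$. Hence there is a uniform $K = K(x,y) \in \N$, depending only on $x$ and $y$, such that any $k$ admitting such a long shortcut satisfies $k \leq K$. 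Setting $\alpha_* = \min\{\alpha_j : 0 \leq j \leq K\} > 0$, we obtain
$$c(\x) \geq c(x_i,x_{i+1}) = \alpha_k\rho(x_i,x_{i+1}) \geq \alpha_*\,\rho(x,y)/4.$$

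In either case, $c(\x) \geq C(x,y) > 0$ for a constant depending only on $x$ and $y$, so taking the infimum over $\x \in \cI_A(x,y)$ gives $d(x,y) > 0$. The main (and really only) obstacle is positive-definiteness, and it is precisely what Lemma \ref{l:small-jumps} was designed to address: it handles itineraries that try to make $c$ small by using only small shortcuts, while the observation that $\lambda_k \to 0$ forces $k$ to be small whenever a shortcut has $\rho$-length comparable to $\rho(x,y)$ handles the complementary case, where the cost factor $\alpha_k$ stays bounded away from zero.
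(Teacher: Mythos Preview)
Your proof is correct and follows essentially the same approach as the paper's own proof: both reduce to Lemma~\ref{l:small-jumps} for the case of small shortcuts, and to the decay $\lambda_k \to 0$ (forcing bounded level and hence a positive lower bound on $\alpha_k$) for the case of a long shortcut. Your formulation is in fact slightly cleaner, since you explicitly take $\alpha_* = \min_{0 \le j \le K}\alpha_j$, whereas the paper writes $\alpha_n$ for what should be this minimum.
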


\begin{proof}
  As symmetry and the triangle inequality are obvious, we only need to verify that $d(x,y) > 0$ for all $x \neq y$. Let $\x \in \cI_A(x,y)$ be so that $c(\x) < d(x,y) + \epsilon$ for some $\epsilon > 0$ to be determined. If the $\rho$-length of all shortcuts used by $\x$ is at most $\frac{1}{4} \rho(x,y)$, then by Lemma \ref{l:small-jumps}, we get $\rho(x,y) \leq \Csix c(\x) + \frac{1}{2} \rho(x,y)$, which gives
  \begin{align*}
    d(x,y) > c(\x) - \epsilon \geq \frac{1}{2\Csix} \rho(x,y) - \epsilon.
  \end{align*}
  If $\x$ uses some shortcut $(x_i,x_{i+1})$ of length at least $\frac{1}{4} \rho(x,y)$ then there is some $n \geq 0$ depending only on $\rho(x,y)$ so that $\{x_i,x_{i+1}\} \in \cS_{\leq n}$. But
  \begin{align*}
    d(x,y) > c(\x) - \epsilon \geq c(x_i,x_{i+1}) - \epsilon \geq \alpha_n \rho(x,y) - \epsilon.
  \end{align*}
  Thus, if we take $\epsilon = \frac{1}{2} \min\left( \frac{1}{2\Csix}, \alpha_n \right) \rho(x,y) > 0$, we get that $d(x,y) > 0$ in both cases, as desired.
\end{proof}

\begin{lemma}
    $\cH^p_d(F) > 0$.
\end{lemma}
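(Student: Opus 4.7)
The plan is to combine the David--Semmes regularity of the identity $(F,\rho) \to (F,d)$ from Proposition \ref{l:DS-regularity} with the upper Ahlfors bound built into the definition of $F^R$ in \eqref{eq:Fr-defn}. Since $\cH^p_\rho(F^R) \to \cH^p_\rho(F) > 0$ as $R \to 0$, I will fix $R > 0$ with $\cH^p_\rho(F^R) > 0$ and aim to show $\cH^p_d(F) \geq (C'')^{-1} \cH^p_\rho(F^R) > 0$ for an explicit constant $C''$.

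The key step is the following diameter estimate: for every $A \subset F$ with $\diam_d(A) < \delta$ and $2C\delta < R$ (where $C$ is the David--Semmes constant),
$$\cH^p_\rho(A \cap F^R) \leq C'' \diam_d(A)^p.$$
To verify it, one may assume $A \cap F^R \neq \emptyset$ (else the estimate is trivial) and pick any $x \in A$, so that $A \subset B_d(x,\diam_d(A))$. By Proposition \ref{l:DS-regularity} this $d$-ball is covered by three $\rho$-balls $B_\rho(y_j, C\diam_d(A))$, $j = 1, 2, 3$. For each index $j$ whose ball meets $F^R$, pick $z_j$ in that intersection; then $B_\rho(y_j, C\diam_d(A)) \subseteq B_\rho(z_j, 2C\diam_d(A))$, and since $z_j \in F^R$ with $2C\diam_d(A) < R$, the defining inequality of $F^R$ yields $\cH^p_\rho(B_\rho(y_j, C\diam_d(A)) \cap F) \leq \ahlfors (2C\diam_d(A))^p$. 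The indices whose ball is disjoint from $F^R$ contribute nothing to $A \cap F^R$. Summing gives the bound with $C'' = 3 \ahlfors (2C)^p$.

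To conclude, let $\{A_i\}$ be any cover of $F$ with $\diam_d(A_i) < \delta < R/(2C)$. Summing the estimate above over $i$ and using $F^R \subset F \subset \bigcup_i A_i$ gives
$$\cH^p_\rho(F^R) \leq \sum_i \cH^p_\rho(A_i \cap F^R) \leq C'' \sum_i \diam_d(A_i)^p.$$
Taking the infimum over such covers at scale $\delta$ bounds $\cH^p_{d,\delta}(F) \geq (C'')^{-1} \cH^p_\rho(F^R)$, and letting $\delta \to 0$ delivers the desired $\cH^p_d(F) \geq (C'')^{-1}\cH^p_\rho(F^R) > 0$. The substantive obstacle in this argument is already resolved by Proposition \ref{l:DS-regularity}; the only minor subtlety here is that the three $\rho$-balls provided by DS-regularity need not be centered at points of $F^R$, which is handled by recentering at a nearby point of $F^R$ (at the cost of doubling the radius) before invoking the upper Ahlfors estimate.
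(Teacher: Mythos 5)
Your proof is correct, and it rests on the same key ingredient as the paper's (Proposition \ref{l:DS-regularity}), but the bookkeeping is genuinely different. The paper sidesteps the fact that Hausdorff covers consist of arbitrary sets by passing to the comparable \emph{spherical} Hausdorff measure: a cover of $F$ by $d$-balls $B_d(x_i,r_i)$ becomes, via DS-regularity, a cover by three times as many $\rho$-balls of radius $Cr_i$, giving directly $\cH^p_\rho(F)\leq 3C^p\,\cH^p_d(F)$ with no density hypotheses used at all. You instead keep ordinary Hausdorff measure and pay for it with the upper Ahlfors bound built into \eqref{eq:Fr-defn}: after enclosing a cover element $A_i$ in a $d$-ball and applying DS-regularity, you recenter the three $\rho$-balls at points of $F^R$ and invoke $\cH^p(F\cap B(z,r))<\ahlfors r^p$ to convert the covering into the mass bound $\cH^p_\rho(A_i\cap F^R)\lesssim \diam_d(A_i)^p$, and then sum. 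This Frostman-type route is valid, and it correctly handles the recentering subtlety; its cost is the extra input of upper regularity of $F^R$ together with the observation that $\cH^p_\rho(F^R)>0$ for small $R$, which the paper's version avoids. Two minor technical points: since balls are open, $A\subseteq B_d(x,\diam_d(A))$ can fail when the diameter is attained, so use radius $(1+\eta)\diam_d(A)$ and let $\eta\to 0$; and when you sum over a cover of $F$ by arbitrary sets $A_i$ (not necessarily subsets of $F$), replace each by $A_i\cap F$ first, which only shrinks diameters. Neither affects the argument.
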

\begin{proof}
By the assumption of $F$, it suffices to show that
    $\cH^p_\rho(F) \lesssim     \cH^p_d(F) $.
To prove this, we may also, by abuse of notation, let $\cH^p$ be the spherical Hausdorff $p$-measure of the appropriate metric normalized so that the measure of the unit ball is 1.

Let $\epsilon > 0$ and $\{B_d(x_i,r_i)\}_{i=1}^\infty$ be a cover of $F$ with balls with respect to the metric $d$ so that
\begin{align*}
    \sum_{i=1}^\infty r_i^p \leq \cH^p_d(F) + \epsilon.
\end{align*}
Using Proposition \ref{l:DS-regularity}, for each $i$, there exist points $y_{i,1},y_{i,2},y_{i,3}$ so that
$$B_d(x_i,r_i) \subseteq B_\rho(y_{i,1},Cr_i) \cup B_\rho(y_{i,2},Cr_i) \cup B_\rho(y_{i,3},Cr_i).$$
This gives a natural cover of $F$ using balls with respect to the metric $\rho$.  Thus
$$\cH^p_\rho(F) \leq 3 C^p (\cH^p_d(F)+\epsilon).$$
Taking $\epsilon$ to $0$ gives our lemma.
\end{proof}

\begin{lemma} \label{l:not-bilip}
    For any positive measure $A \subseteq F$, $f|_A$ is not biLipschitz.
\end{lemma}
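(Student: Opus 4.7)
Since $d\leq\rho$, the map $f$ is $1$-Lipschitz, so it suffices to show that for any positive-measure $A\subseteq F$ we can find pairs $(a_1^k,a_2^k)\in A\times A$ with $a_1^k\neq a_2^k$ and $d(a_1^k,a_2^k)/\rho(a_1^k,a_2^k)\to 0$. The idea is to take these pairs very close to level-$k$ shortcut pairs $(x_{B_k},y_{B_k})\in\cS_k$: since $c(x_{B_k},y_{B_k})=\alpha_k\rho(x_{B_k},y_{B_k})$ with $\alpha_k\to 0$, a triangle inequality in $d$ will give the needed decay of the ratio.

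First I would choose $R>0$ small enough that $A\cap F^R$ has positive measure, possible because $\lim_{R\to 0}\cH^p(F^R)=\cH^p(F)$. The measure $\cH^p\lfloor_F$ is Ahlfors-regular (in particular locally doubling) at every point of $F^R$ by \eqref{eq:Fr-defn}, so standard Lebesgue differentiation gives that $\cH^p$-a.e.\ $a_0\in A\cap F^R$ is a density point of $A$ in $F$, i.e.\
\[
\cH^p((F\setminus A)\cap B_\rho(a_0,r))=o(r^p)\quad\text{as }r\to 0.
\]
Property \eqref{item-cover} gives $\sum_k\cH^p(F^{\lambda_k}\setminus\bigcup\cB_k)<\infty$, so by Borel--Cantelli I may additionally require that $a_0\in\bigcup_{B\in\cB_k}B$ for every sufficiently large $k$. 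Since $F^R\subseteq F^{\lambda_k}$ when $\lambda_k<R$, this yields a unique $B_k\in\cB_k$ with $a_0\in B_k=B_\rho(x_{B_k},\pad^{-1}r_k)$, where $r_k:=\rho(x_{B_k},y_{B_k})\leq \pad\lambda_k/2\to 0$ by \eqref{item:S-bound}; in particular, $x_{B_k},y_{B_k}\in B_\rho(a_0,2\pad^{-1}r_k)$.

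Since $x_{B_k},y_{B_k}\in F^{\lambda_k}$ by \eqref{item:F-contains}, the lower density bound gives $\cH^p(F\cap B_\rho(x_{B_k},\tau r_k))\geq \ahlfors^{-1}(\tau r_k)^p$ (and similarly at $y_{B_k}$) for all $\tau<1$. Combining this with
\[
\cH^p((F\setminus A)\cap B_\rho(x_{B_k},\tau r_k))\leq \cH^p((F\setminus A)\cap B_\rho(a_0,3\pad^{-1}r_k))=o(r_k^p),
\]
I can choose $\tau_k\to 0$ slowly enough (taking $\tau_k$ proportional to the $1/p$-power of the relative density deficit at scale $3\pad^{-1}r_k$) so that both $A\cap B_\rho(x_{B_k},\tau_k r_k)$ and $A\cap B_\rho(y_{B_k},\tau_k r_k)$ are nonempty; pick $a_1^k,a_2^k$ in these sets. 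Then $c(x_{B_k},y_{B_k})=\alpha_k r_k$ and the triangle inequalities for $d$ and $\rho$ give
\begin{align*}
  d(a_1^k,a_2^k)&\leq \rho(a_1^k,x_{B_k})+c(x_{B_k},y_{B_k})+\rho(y_{B_k},a_2^k)\leq (2\tau_k+\alpha_k)r_k,\\
  \rho(a_1^k,a_2^k)&\geq r_k-\rho(a_1^k,x_{B_k})-\rho(y_{B_k},a_2^k)\geq (1-2\tau_k)r_k,
\end{align*}
so $d(a_1^k,a_2^k)/\rho(a_1^k,a_2^k)\to 0$ and the points are distinct for $k$ large, contradicting any biLipschitz lower bound for $f|_A$. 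The main technical point is the last step: matching the $o(r_k^p)$ decay coming from density of $A$ at $a_0$ at the fixed multiple $\pad^{-1}r_k$ of the ball radius with the mass $\gtrsim(\tau_k r_k)^p$ needed at the smaller shortcut scale $\tau_k r_k$; this is precisely where the lower density hypothesis \eqref{eq:Fr-defn} is essential, as noted by the authors.
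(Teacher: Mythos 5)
Your argument is correct and is essentially the paper's proof: you locate a density point of $A$ lying in the shortcut balls $\bigcup_{B\in\cB_k}B$ at infinitely many scales, use the lower density bound \eqref{eq:Fr-defn} at the shortcut endpoints $x_{B_k},y_{B_k}\in F^{\lambda_k}$ to find nearby points of $A$, and let the cost $\alpha_k\rho(x_{B_k},y_{B_k})$ with $\alpha_k\to 0$ collapse the ratio $d/\rho$. The only cosmetic differences are that you invoke Borel--Cantelli where the paper uses a $\limsup$-measure argument, and you produce a sequence of pairs with ratio tending to $0$ rather than fixing $\epsilon$ in advance.
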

\begin{proof}
  First note that \eqref{item-cover} of the inductive hypothesis of Section \ref{s:construction} along with the fact that $\cH^p(F^{\lambda_k}) \to \cH^p(F)$ gives that $\cH^p\left(\bigcup_{B \in \cB_k} B \right) \to \cH^p(F)$. This gives that almost every $x\in F$ has infinitely many indices $n$ so that $x\in \bigcup_{B \in \cB_k} B$, {\em i.e.}
$\cH^p\left(F\setminus\limsup_k \bigcup_{B \in \cB_k} B\right)=0$.

  Let $A \subseteq F$ be a positive measure subset and $0 < \epsilon < \frac{\beta}{4}$ be arbitary. Let $x$ be any point of density of $A \cap \limsup_k \bigcup_{B \in \cB_k} B$.
  As $\Theta^*(F,x) < \ahlfors$, for some $r_x>0$ sufficiently small we have for all $r < r_x$ that
\begin{align}
  \cH^p(F\cap B(x,r)) &\leq \ahlfors r^p, \label{eq:low-weight} \\
  \cH^p(A\cap B(x,r)) &\geq (1-\epsilon^p\ahlfors^{-2}/2^p) \cH^p(F \cap B(x,r)). \label{eq:high-density}
\end{align}
All balls are $\rho$-balls.

  Choose some $k$ large enough so that $\alpha_k < \epsilon$, $\lambda_k < \frac{r_x}{2}$, and $x \in \bigcup_{B \in \cB_k} B_k$. Let $B \in \cB_k$ contain $x$ and $r_B$ its radius.
  Then we can find
  $\{y,z\}\in \cS_n$ lying in $\beta B$ so that
  \begin{align}
    \rho(y,z) = \beta r_B \label{eq:rB-eq}
  \end{align}
  and $B(y,\epsilon r_B) \cup B(z,\epsilon r_B) \subseteq B(x,2r_B)$.

  Suppose for contradiction that $A \cap B(y,\epsilon r_B) = \emptyset$. As $y \in F^{\lambda_k}$ and $r_B < \lambda_k < \frac{r_x}{2}$, we get by \eqref{eq:Fr-defn} that $\cH^p(F \cap B(y,\epsilon r_B)) > \ahlfors^{-1} \epsilon^p r_B^p$.
  Thus,
  \begin{align*}
    \frac{\cH^p(A \cap B(x,2r_B))}{\cH^p(F \cap B(x,2r_B))} \leq \frac{\cH^p(F \cap B(x,2r_B)) - \ahlfors^{-1} \epsilon^p r_B^p}{\cH^p(F \cap B(x,2r_B))} \overset{\eqref{eq:low-weight}}{<} 1 - \ahlfors^{-2} \left(\frac{\epsilon}{2}\right)^p,
  \end{align*}
  but this contradicts \eqref{eq:high-density}. Thus, there exists $y' \in B(y,\epsilon r_B) \cap A$. Similarly, there exists $z' \in B(z,\epsilon r_B) \cap A$. By \eqref{eq:rB-eq} and the fact that $\epsilon < \frac{\beta}{4}$, we have
  \begin{align*}
    \rho(y',z') \geq \rho(y,z) - \rho(y,y') - \rho(z,z') \geq \frac{\beta}{2} r_B.
  \end{align*}
  Thus, we get
  \begin{align*}
    d(y',z') \leq d(y,z) + \rho(y,y') + \rho(z,z') \leq \alpha_n \rho(y,z) + 2 \epsilon r_B \overset{\eqref{eq:rB-eq}}{\leq} \left( \epsilon \beta + 2\epsilon \right) r_B \leq \epsilon \frac{2\beta + 4}{\beta} \rho(y',z').
  \end{align*}
  As $\epsilon$ was arbitrary, we get that $f|_A$ is not biLipschitz.
\end{proof}

\bibliographystyle{alpha}
\bibliography{no-bilip-pieces-1dim}
\end{document}